\documentclass{template_report}

\usepackage{amsmath,amsfonts,amssymb,amsthm}
\usepackage{xcolor}
\usepackage{enumitem,hyperref,bm}

\usepackage{pgfplots}
\usepackage{mathrsfs}
\usetikzlibrary{arrows}
\usetikzlibrary{arrows.meta}

\usepackage{tikz}
\usetikzlibrary{fit}

\usepackage{thmtools}

\newtheorem{lemma}{Lemma}[chapter]
\newtheorem{theorem}[lemma]{Theorem}

\newtheorem{proposition}[lemma]{Proposition}
\newtheorem{definition}[lemma]{Definition}
\newtheorem{corollary}[lemma]{Corollary}

\newtheorem{question}[lemma]{Question}

\theoremstyle{definition}
\newtheorem{remark}[lemma]{Remark}
\newtheorem{example}[lemma]{Example}
\newtheorem{exercise}[lemma]{Exercise}

\newcommand{\ds}{\displaystyle}

\newcommand{\eps}{\varepsilon}
\newcommand{\N}{\mathbb{N}}
\newcommand{\Z}{\mathbb{Z}}
\newcommand{\Q}{\mathbb{Q}}
\newcommand{\R}{\mathbb{R}}
\newcommand{\A}{\mathcal{A}}

\newcommand{\pref}{\mathrm{pref}}
\newcommand{\card}{\mathrm{card}}
\newcommand{\lan}{\mathcal{L}}
\newcommand{\G}{\mathcal{G}}

\newcommand{\Span}{\mathrm{Span}}
\newcommand{\rank}{\mathrm{rk}}
\newcommand{\ext}{\mathrm{Ext}}

\title{Infinite Words with very Low Factor Complexity: \\an introduction to Combinatorics on Words}

\author[1,2]{M\'elodie Andrieu}

\affil[1]{Laboratoire de Math\'ematiques Pures et Appliqu\'ees Joseph Liouville, Universit\'e du Littoral C\^ote d'Opale, France.}

\affil[2]{Centro de Modelamiento Matem\'atico, Universidad de Chile and IRL-CNRS 2807,  Chile. \par\email {melodie.andrieu@univ-littoral.fr}}

\documenttitle{Infinite Words with very Low Complexity}

\documentauthor{M. Andrieu}
\date{December 2025}

\begin{document}
	
	\maketitle
	
	\begin{abstract}

		These lecture notes provide an introduction to combinatorics on words and its interactions with dynamics, algebra, and arithmetic.

		The central theme is the notion of low factor complexity for infinite words. We investigate the following guiding questions: What is the minimal complexity of a non-trivial infinite word over a binary, ternary, or more generally finite alphabet? How should ``non-triviality'' be formalized? Which words achieve this minimal complexity? Are there many? Are they interesting?

		In exploring these questions, we  introduce classical objects and tools from combinatorics on words such as \emph{Sturmian words} and \emph{Rauzy graphs}, as well as little-known and new results. 
		In particular, the third chapter is devoted to a theorem by R. Tijdeman from 1999, which generalizes a seminal result of M. Morse and G. Hedlund from 1938, and of which we provide a conceptually new, algebraic proof by J. Cassaigne and the author from 2022. 
		
		The first chapter requires no prior knowledge in combinatorics on words and symbolic dynamics. It is intended for graduate students. The second and third chapters are intended both for graduate students and researchers familiar with Sturmian words and their generalizations.  
		\medskip
		
		\emph{This text is based on a lecture delivered during the 7th edition of the international conference Dyadisc, held in Valpara\'iso, Chile, in December 2024. The new results it contains have also been presented in various seminars since 2023.}

	\end{abstract}
	
	%
	%
	
	\setcounter{page}{1}
	\tableofcontents
	
	\cleardoublepage
	
	\chapter*{Overview}
	\addcontentsline{toc}{chapter}{Overview}

	\noindent Let $d\geq 1$ be an integer. An \emph{infinite $d$-ary word} is a  sequence $w \in  \{1,\ldots,d\}^{\N}$. The \emph{complexity} of $w$ is the function that counts, for every integer $n$, the number of distinct subwords of length $n$ that appear in $w$.
	The aim of this course is to answer the following two questions:
	
	1) What is the \emph{minimal} complexity of \emph{interesting} $d$-ary words? 
	
	2) Which words achieve this minimal complexity?
	
	\noindent Of course, answering these questions will require a rigorous and legitimate definition for \emph{minimal} and \emph{interesting}.
	
	\bigskip

	For binary words, the situation has been perfectly understood since the 1940s: \emph{Sturmian words} are exactly the \emph{interesting} (which in this case means \emph{non-eventually periodic}) words with minimal complexity. Chapter~\ref{sec:MH38_Sturmian} is a self-contained introduction to Sturmian words. We introduce them from a purely combinatorial perspective, and show how they relate to continued fractions, and to some dynamical systems. In particular, we emphasize why the rational independence of their letter frequencies is a central property of Sturmian words. In Chapter~\ref{sec:Generalization_MH38}, we explore how both the seminal theorem of M. Morse and G. Hedlund from 1938 and Sturmian words can be generalized to $d$-ary alphabets, when $d\geq3$. We will see that, in this context, ``interesting''  can no longer mean  ``non-eventually periodic'', but should instead become ``with rationally independent letter frequencies''. We conclude the chapter by discussing a theorem  by R. Tijdeman from 1999 (independently rediscovered by J. Cassaigne and  the author in 2022), which gives the minimal complexity of $d$-ary words with rationally independent letter frequencies. Finally, in Chapter~\ref{sec:Proof_Tijdeman}, we provide a conceptually new, algebraic proof of Tijdeman's theorem, and develop its consequences on the combinatorial structure of $d$-ary words with rationally independent letter frequencies and minimal complexity.

\medskip 

\noindent The three chapters can be read independently.

\vspace{5cm}

	\paragraph{Acknowledgements} The author is indebted to L\'eo Vivion for his inestimable help in the preparation of these lecture notes.
%

	
	\cleardoublepage
	
	\chapter{A seminal theorem by Morse and Hedlund (1938)}\label{sec:MH38_Sturmian}

	\section{Preliminaries: infinite words, complexity}

	Let $u$ be a finite word and $w$ be an \emph{infinite word} (by definition: a sequence in $\A^\N$ where $\A$ is a finite set called an \emph{alphabet}). We say that \emph{$u$ is a factor of length $n$ of $w$}, which we denote by $u\in\lan_n(w)$, if $u$ occurs in $w$ as a block of $n$ consecutive letters. For example, $\texttt{art}$ is a factor of length 3 of the French word $\texttt{montmartre}$ ($\texttt{art}\in\lan_3(\texttt{montmartre})$).
	
	\begin{definition}
		The \emph{complexity} of an infinite word $w$ is the function that counts, for each integer $n$, the number of factors of length $n$ in $w$:
		\[
		\begin{array}{rccl}
			p_w: & \N & \longrightarrow & \N\\
			& n & \longmapsto & \card\big(\lan_n(w)\big).
		\end{array}
		\]
	\end{definition}
	
	\noindent (Note that, by convention, we always have $p_w(0)=1$: every word  admits exactly one factor of length $0$, the ``empty word'', denoted by $\epsilon$.)
	
	\medskip
	We let the reader check the following examples.
	
	\begin{example}
		(i) If $w_1=1^\omega=111111...$ is the constant unary word, then $p_{w_1}(n)=1$ for all $n\in\N$. Indeed, the unique factor of length $n$ of $w_1$ is $1...1$ (that is, the letter $1$  repeated $n$ times). Note that $w_1$ has the absolute minimal complexity.\\
		(ii) If $w_2=(12)^\omega=12121212...$ is the periodic word with period $12$, then $p_{w_2}(n)=2$ for all $n\in\N_{\geq 1}$. Indeed, $w_2$ admits exactly two factors of length $n$: one that starts with the letter $1$, and one that starts with the letter $2$. \\
		(iii) If $w_3=(1233)^\omega=1233\cdot 1233\cdot 1233...$ is the periodic word with period $1233$, then $p_{w_3}(1)=3$ and $p_{w_3}(n)=4$ for all $n\in\N_{\geq 2}$. Indeed, $w_3$ is written with exactly $3$ letters, and every factor of length $n\geq 2$ is solely determined by its first two letters: $12$, $23$, $33$, or $31$.\\
		(iv) The decimal \emph{Champernowne word}, which is obtained by concatenating the base $10$ expansions of all integers:
		\[
		w_C=0123456789101112131415161718192021...
		\]
		has complexity $p_{w_C}(n)=10^n$, $\forall\, n\in\N$. Indeed, every  finite $10$-ary word appears in $w_C$. (Exercise: Where is the first occurrence of the factor $02$?) This complexity is maximal for infinite $10$-ary words.\\
		(v) It is believed, but far from being proved, that the decimal expansion of the golden ratio  also has  maximal complexity. This claim is a particular case of a much larger conjecture, attributed to E. Borel in 1950, which asserts that every irrational algebraic number is ``normal in every base'' \cite{Bor50}.
	\end{example}
	
	As the name suggests, the complexity quantifies how complicated an infinite word is. This combinatorial function is to be understood as a  refinement of the dynamical notion of entropy, which measures how much the knowledge of a small portion of a trajectory gives information about its possible long-term behavior. The next section establishes that, on this scale, periodic words are the simplest words.

	\section{The seminal theorem by Morse and Hedlund}
	
	The  next theorem appeared in the first article of M. Morse and G. Hedlund \cite{MH38}; it is one of the foundational results of Combinatorics on Words.
	
	\begin{theorem}[M. Morse and G. Hedlund, 1938] \label{MH38} 
		The following assertions are equivalent:
		\begin{enumerate}
			\vspace{-0.2cm}\item[\emph{(i)}] The infinite word $w$ is eventually periodic;
			\vspace{-0.2cm}\item[\emph{(ii)}] Its complexity $p_w$ is bounded;
			\vspace{-0.2cm}\item[\emph{(iii)}] There exists $n\in\N$ such that $p_w(n)\leq n$.
		\end{enumerate}
	\end{theorem}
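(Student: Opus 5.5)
We prove the cycle of implications (i)$\Rightarrow$(ii)$\Rightarrow$(iii)$\Rightarrow$(i).

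\emph{(i)$\Rightarrow$(ii).} Write $w=uv^\omega$ with $|u|=k$ and $|v|=q$. A factor of length $n$ is determined by its starting position $i$. If $i\geq k$, it depends only on $i \bmod q$. Hence $p_w(n)\leq k+q$ for all $n$.

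\emph{(ii)$\Rightarrow$(iii).} First observe that $p_w$ is non-decreasing. Every factor of length $n$ of an infinite word extends to the right by at least one letter, so the map $\lan_{n+1}(w)\to\lan_n(w)$ that deletes the last letter is surjective. Now suppose $p_w(n)\leq C$ for all $n$. Then taking $n=C$ gives $p_w(C)\leq C$, as required.

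\emph{(iii)$\Rightarrow$(i).} This is the main step. Let $n$ satisfy $p_w(n)\leq n$.
\begin{enumerate}
\item We claim there is some $m\leq n$ with $p_w(m+1)=p_w(m)$. Suppose not. Since $p_w$ is non-decreasing, it would then be strictly increasing on $\{0,\ldots,n\}$. Together with $p_w(0)=1$, this gives $p_w(n)\geq n+1$, a contradiction.
\item Fix such an $m$. The surjection $\lan_{m+1}(w)\to\lan_m(w)$ is between finite sets of equal size, so it is a bijection. Therefore every factor $u$ of length $m$ has exactly one right extension $ua\in\lan_{m+1}(w)$.
\item Consider the sequence of windows $x_i=w_i\cdots w_{i+m-1}$. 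By step 2, $w_{i+m}$ is a function of $x_i$, and hence $x_{i+1}$ is a function $f(x_i)$, where $f$ is a fixed map on the finite set $\lan_m(w)$.
\item Apply the pigeonhole principle to $x_0,x_1,\ldots$: there exist $i<j$ with $x_i=x_j$. Iterating $f$ gives $x_{i+t}=x_{j+t}$ for all $t\geq 0$. Reading off first letters, $w_{k}=w_{k+(j-i)}$ for all $k\geq i$, so $w$ is eventually periodic.
\end{enumerate}

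The only subtle point is the case $m=0$. It must be handled by the convention $p_w(0)=1$ and the empty word. In that case every letter has a unique successor determined by nothing, so $w$ is constant from the start, and the argument above covers it uniformly.
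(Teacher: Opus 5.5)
Your proof is correct and follows essentially the same route as the paper. It uses the same cycle of implications, and for (iii)$\Rightarrow$(i) it finds $m$ with $p_w(m+1)=p_w(m)$ to get unique right extensions; your pigeonhole on the deterministic window map $x_{i+1}=f(x_i)$ is a cleaner formalization of the paper's ``some factor $u$ recurs, so $uvu\in\lan(w)$ and $u$ is always followed by $vu$''. (Minor: reading off $w_{k+m}$ as a function of $x_k$, rather than the first letters of windows, would make the $m=0$ case genuinely uniform instead of a separate remark.)
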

	
	\begin{proof} (i) $\Rightarrow$ (ii) An eventually periodic word $w=u\cdot p^\omega$ admits at most one new factor of length $n$ (whatever $n$ is) starting at each position of its preperiod $u$ or its period $p$. Therefore, its complexity $p_w$ is bounded above by the sum of the lengths of its preperiod and period: $|u|+|p|$.\\
		(ii) $\Rightarrow$ (iii) This is trivial: it suffices to take $n=M$, where $M\in\N$ is an upper bound for the complexity function.\\
		(iii) $\Rightarrow$ (i) Since $p_w(0)=1$ (it was our convention), $p_w(n)\leq n$, and since $n\mapsto p_w(n)$ is non-decreasing (indeed, every factor of length $n$ of $w$ can always be extended into a factor of length $n+1$), there must exist $n_0\in\{1,\ldots,n-1\}$ such that $p_w(n_0)=p_w(n_0+1)$. This equality implies that, if $u$ is a factor of length $n_0$, then all its occurrences in $w$ are always followed by the same letter. (Otherwise, $u$ would give rise to two factors of length $n_0+1$, implying that $p_w(n_0+1)>p_w(n_0)$.) Furthermore, among the $p_w(n_0)$ factors of length $n_0$ occurring in $w$, at least one, let us call it again $u$, must occur infinitely often. Thus, there exists $v\in\lan(w)$ such that $uvu\in\lan(w)$ (by $\lan(w)$ we denote the set of all factors of $w$, regardless of their length). Since every factor of length $n_0$ of $w$ is always followed by the same letter, we deduce that the factor $u$ is always followed by the factor $vu$. This shows that there exists $p\in\lan(w)$ such that $w=pu(vu)^\omega$, \emph{i.e.}, $w$ is eventually periodic.
	\end{proof}

	\begin{remark}\label{rk:MH38}
		It follows from this proof that a word $w$ is eventually periodic if and only if its complexity function is \emph{locally constant}: there exists a length $m$ for which $p_w(m+1)=p_w(m)$, if and only if, its complexity is  eventually constant:  for all $n\geq m$, $p_w(n)=p_w(m)$.
	\end{remark}
	
	The contraposition of (iii) $\Rightarrow$ (i) states that if $w$ is not eventually periodic, then its complexity satisfies $p_w(n)\geq n+1$ for all $n\in\N$. This leads to the following natural question.
	
	\begin{question}
		Does there exist an infinite word $w$ whose complexity is exactly $p_w(n)=n+1$ for all $n\in\N$?
	\end{question}
	
	\noindent As we shall see, the answer to this question is `yes'. This positive answer will raise three new questions:
	
	1. How many infinite words satisfy this complexity condition?
	
	2. What do they look like? Can we construct them effectively/efficiently? 
	
	3. Are they interesting? 
	
	\noindent The remainder of this chapter is devoted to answering these questions. As a spoiler, the answers will be: `uncountably many, essentially one for each irrational number', `yes', `yes', and `YES!'. 
	
	\begin{definition}
		The infinite words $w$ with complexity $p_w(n)=n+1$ for all $n\in\N$ are called \emph{Sturmian words}.
	\end{definition}

	\begin{example}
		Although we have not yet proven that Sturmian words exist (this will be done later), here is a first example, called \emph{the Fibonacci word}. 
		\[w_{fibo} = 1211212112112121121211211212112112121121211211212112121...\]
		In the next sections, we will see how this word is constructed, and why it is called ``Fibonacci''.
	\end{example}
	
	\begin{exercise}
		Prove that, if an infinite word satisfies $p_w(n)=n+1$ for infinitely many $n\in\N$, then it is Sturmian.
	\end{exercise}
	
	\section{The class of Sturmian words}
	
	The study of Sturmian words was initiated in 1940 by M. Morse and G. Hedlund in their second article \cite{MH40}. They have been, and continue to be, extensively studied. This section is intended to be a self-contained introduction to Sturmian words. However, it is far from being exhaustive (for example, abelian properties are deliberately missing). We refer the interested reader to \cite[Chapter~2]{Loth02} for complements. \bigskip
	
	\emph{Outline of the section. } In the next two subsections, we assume that Sturmian words do exist and study their properties. We will see that, if they exist, they must be intimately connected to continued fractions. In the third subsection, we use continued fractions to design an effective and efficient way to construct them---thus finally proving that they \emph{indeed} exist. In the fourth and last subsection, we explain how Sturmian words can be dynamically and geometrically visualized.

	\subsection{Basic properties}
	
	The next proposition gathers elementary properties of Sturmian words that will be needed in the sequel.
	
	\begin{proposition}\label{prop:basics}
		Let $w$ be a Sturmian word. Then:
		\begin{itemize}
			\vspace{-0.25cm}\item[\emph{(i)}] $w$ is a binary word (hereafter, we set the alphabet to $\A=\{1,2\}$).
			\vspace{-0.25cm}\item[\emph{(ii)}] $w$ is ``recurrent'' (by definition: every factor of $w$ occurs infinitely many times).
			\vspace{-0.25cm}\item[\emph{(iii)}] If $S$ denotes the ``shift map'' (by definition: the map that erases the first letter of an infinite word; for instance,  $S(314159...)=14159...$), then $S(w)$ is also a Sturmian word.
			\vspace{-0.25cm}\item[\emph{(iv)}] For every $n\in\N$, there exists a unique factor $u\in\lan_n(w)$ such that both extensions $u1$ and $u2$ are factors of $w$ (definition: we  say that $u$ is ``right-special'').
		\end{itemize}
	\end{proposition}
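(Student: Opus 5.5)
We treat the four items in turn, each using only the hypothesis $p_w(n)=n+1$ together with Theorem~\ref{MH38} and the observations made in its proof.

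\emph{Item (i).} Taking $n=1$ gives $p_w(1)=2$. So $w$ uses exactly two letters, and we rename them $1$ and $2$.

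\emph{Item (iv).} Every factor of length $n$ of $w$ extends to the right into at least one factor of length $n+1$, since $w$ is infinite. Conversely, every factor of length $n+1$ is a length-$n$ factor followed by a letter. Let $e(u)$ denote the number of letters $a\in\{1,2\}$ with $ua\in\lan(w)$. Then
\[
p_w(n+1)=\sum_{u\in\lan_n(w)} e(u),
\]
where each $e(u)\in\{1,2\}$ by (i). Hence $p_w(n+1)-p_w(n)$ equals the number of $u\in\lan_n(w)$ with $e(u)=2$. Since this difference is $1$, there is exactly one right-special factor of each length.

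\emph{Item (ii).} This is the step needing the most care. Suppose some factor $u$ occurs only finitely often. Then there is a position $N$ after which $u$ never occurs. Let $w'=S^N(w)$, and set $m=|u|$.
\begin{itemize}
\item Every factor of $w'$ is a factor of $w$, so $p_{w'}(n)\le n+1$.
\item Every factor of $w$ of length $n\ge m$ that contains $u$ is absent from $w'$, and $u$ itself (padded to length $n$ using a prefix-extension inside $w$) gives at least one such factor. Hence $p_{w'}(n)\le n$ for $n\ge m$.
\end{itemize}
By Theorem~\ref{MH38} (iii)$\Rightarrow$(i), $w'$ is eventually periodic. Then $w$ itself is eventually periodic, so $p_w$ is bounded by Theorem~\ref{MH38}, contradicting $p_w(n)=n+1$. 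Therefore every factor occurs infinitely often.

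The padding claim needs a check: each occurrence of $u$ in $w$ lies inside a length-$n$ window of $w$. This holds because $u$ occurs at some position of $w$ and $w$ is infinite to the right, so the window can be taken to start at that occurrence. That window is a length-$n$ factor of $w$ containing $u$, hence missing from $w'$.

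\emph{Item (iii).} Every factor of $S(w)$ is a factor of $w$. Conversely, by (ii) every factor of $w$ occurs infinitely often, in particular at some position $\ge 1$, so it is a factor of $S(w)$. Thus $\lan(S(w))=\lan(w)$ and $p_{S(w)}=p_w$, so $S(w)$ is Sturmian.
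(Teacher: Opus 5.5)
Your proof is correct and follows the paper's argument item by item: (i) from $p_w(1)=2$, (iv) by counting right extensions of length-$n$ factors, (ii) by passing to the suffix $S^N(w)$ that avoids $u$ and invoking Theorem~\ref{MH38}, and (iii) from recurrence, which gives $\lan(S(w))=\lan(w)$. The only cosmetic difference is in (ii), where you prove $p_{w'}(n)\le n$ for every $n\ge |u|$; the single value $n=|u|$, as in the paper, already suffices.
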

	
	\begin{proof}
		(i) Trivial: since $p_w(1)=2$, $w$ uses exactly two letters.\\
		(ii) By contradiction, assume that $u\in\lan_n(w)$ does not occur infinitely many times; suppose, for example, that it does not occur after position $N\in\N$. Then, if we denote by $w':=S^N(w)$ the copy of $w$ after position $N$, we have $u\notin\lan_n(w')$, hence $p_{w'}(n)<p_w(n)=n+1$. This contradicts Theorem~\ref{MH38}, since $w'$ remains non-eventually periodic.\\
		(iii) Trivially, every factor of $S(w)$ is a factor of $w$. Since $w$ is recurrent, the converse is also true: every factor of $w$ is a factor of $S(w)$. Therefore, the complexity functions of $S(w)$ and $w$ are equal, and $S(w)$ is a Sturmian word.\\
		(iv) Since $w$ is a binary word, the quantity $p_w(n+1)-p_w(n)$ is exactly the number of right-special factors of length $n$ of $w$. For a Sturmian word $w$, this difference equals $1$, so for each length $n$, $w$ admits exactly one right-special factor of length $n$.
	\end{proof}
	
	\begin{exercise}
		Let $w$ be a Sturmian word. Prove that for every $n\in\N$, there exists a unique $v\in\lan_n(w)$ such that both left-extensions $1v$ and $2v$ are factors of $w$ (definition: we say that $v$ is ``left-special'').
	\end{exercise}

	\subsection{Interlude: Crash course on the continued fraction}\label{subsect:crash_courseCF}
	
	Before going further, let us recall what a continued fraction is, and why it is an extremely interesting object. (In the next subsection, we will explain how it connects to Sturmian words.)
	
	\medskip
	
	The continued fraction is a \emph{numeration system}, that is, a way to represent real numbers by a finite or infinite sequence of integers. The continued fraction expansion $(a_n)_n$ of a positive real number $x$ is classically denoted by
	\[
	x=[a_0;a_1,a_2,a_3,\cdots].
	\]
	For example, the continued fraction expansion of $17/6$ is $[2;1,5]$, the continued fraction expansion of the golden ratio $\varphi:=(1+\sqrt5)/2$ is $[1;1,1,1,1,1,1...]$ (the infinite sequence  constant, equal to $1$), while the continued fraction expansion of $\pi$ is an infinite sequence that starts with $[3; 7, 15, 1, 292, 1, 1, 1, 2,...]$.
	Contrary to the base expansion, the integers $a_n$ that appear in the continued fraction, which we call ``partial quotients'', are not necessarily bounded. 
	
	\medskip

	\noindent \emph{Question: How do we calculate the partial quotients?} 
	
	The partial quotients $(a_n)$ of $x \in \R_{>0}$ are exactly the successive \emph{integer} quotients that appear in the ``Euclidean algorithm'' applied to the pair of \emph{real numbers} $(x,1)$. For example, the first partial quotient $a_0$ of $x$ is the unique integer $q$ such that
	
	\[x=q\times 1+r,\]
	with $0\leq r<1$. (In other words, $a_0 = \lfloor x\rfloor$ and $r=\{x\}$ are the integer and fractional  parts of $x$.) Then, the second partial quotient $a_1$ is the integer quotient in the ``Euclidean division'' of $1$ by $r$ (which is also, equivalently, the integer quotient in the Euclidean division of $1/r$ by $1$), and so on. Algorithmically, we would write:

	\begin{equation*}\begin{array}{ll} \hspace{0.3cm}\mathtt{def \;CF(}x\mathtt{):} & \hspace{5cm}\\
			\hspace{0.9cm}\mathtt{print }\lfloor x\rfloor &\\
			\hspace{0.9cm}\mathtt{if } \{x\}\neq 0, \; 
			\mathtt{return \; CF}\big(\frac{1}{\{x\}}\big) & \end{array}
	\end{equation*}
	\noindent Note that this algorithm does not always terminate.

	\medskip
	
	The next proposition illustrates why the continued fraction is an extremely interesting algebraic and analytic tool: it unambiguously represents real numbers, provides high-quality rational approximations, and characterizes both rational and quadratic\footnote{Definition: a real number is \emph{quadratic} (resp. \emph{cubic} or \emph{algebraic}) if it is a root of a polynomial of degree $2$ (resp. degree $3$ or any degree) with integer coefficients.} numbers.

	\begin{proposition}\label{prop:reminder_CF}
		Let $x = [a_0;a_1,a_2,a_3,\cdots]$ be a positive real number.
		\begin{enumerate}
			\vspace{-0.2cm}\item[\emph{(1)}] The continued fraction expansion of $x$ is finite if and only if $x \in \Q$.
			\vspace{-0.2cm}\item[\emph{(2)}] \emph{[Dirichlet's theorem]} The sequence of rational numbers
			\[
			\left(\frac{p_n}{q_n}\right)_n := \big([a_0;a_1,a_2,\cdots,a_n]\big)_n = \left(a_0+\frac{1}{a_1+\frac{1}{\ddots+\frac{1}{a_n}}}\right)_n
			\]
			converges efficiently to $x$; moreover, these rational numbers are ``best approximations'' of $x$ in the following sense: one cannot find a rational number that is closer to $x$ than $p_n/q_n$, without paying the price of a larger denominator.
			\vspace{-0.2cm}\item[\emph{(3)}] The following map is a bijection:
			\[
			\begin{array}{ccl}
				\R_{>0}\setminus\Q & \longrightarrow & \N\times(\N_{\geq 1})^\N\\
				x & \longmapsto & (a_n)_{n\in\N}
			\end{array}
			\]
			\vspace{-0.2cm}\item[\emph{(4)}] \emph{[Lagrange's theorem]} An irrational number $x$ is a quadratic if and only if its continued fraction expansion is eventually periodic.
		\end{enumerate}
	\end{proposition}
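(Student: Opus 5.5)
The plan is to base everything on the recursive description of the algorithm, $x = a_0 + \{x\}$ with $\{x\}\in[0,1)$. Write $x_0 = x$ and, whenever $\{x_n\}\neq 0$, set $x_{n+1} = 1/\{x_n\}$, so that $a_n = \lfloor x_n\rfloor$ and $x = [a_0;a_1,\dots,a_{n-1},x_n]$ as a formal continued fraction with a real last entry. The basic tool will be the matrix formalism
\[
\begin{pmatrix} p_n & p_{n-1}\\ q_n & q_{n-1}\end{pmatrix}=\prod_{k=0}^{n}\begin{pmatrix} a_k & 1\\ 1 & 0\end{pmatrix},
\]
proved by induction on $n$. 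From it we get the recurrences $p_n=a_np_{n-1}+p_{n-2}$ and $q_n=a_nq_{n-1}+q_{n-2}$, the determinant identity $p_nq_{n-1}-p_{n-1}q_n=(-1)^{n+1}$, and the formula $x=\frac{p_{n}x_{n+1}+p_{n-1}}{q_{n}x_{n+1}+q_{n-1}}$.

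For (1), I will use the integer Euclidean algorithm. If $x=p/q$, the successive remainders form a strictly decreasing sequence of non-negative integers (after scaling by $q$), so the algorithm terminates. Conversely, a finite expansion is an iterated composition of rational operations, so it gives a rational number.

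For (2), combining the formula for $x$ with the determinant identity gives
\[
\left|x-\frac{p_n}{q_n}\right|=\frac{1}{q_n(q_nx_{n+1}+q_{n-1})}<\frac{1}{q_nq_{n+1}}\le\frac1{q_n^2}.
\]
Since the $q_n$ grow at least like Fibonacci numbers, this gives exponentially fast convergence, with the convergents alternating around $x$. For the best-approximation property, I will take the classical statement that $|qx-p|<|q_nx-p_n|$ forces $q\ge q_{n+1}$. To prove it, write $(p,q)=u(p_n,q_n)+v(p_{n+1},q_{n+1})$ with $u,v\in\mathbb Z$; this decomposition exists because the determinant is $\pm1$. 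If $q<q_{n+1}$, then $u$ and $v$ have opposite signs (or $v=0$). Since $q_nx-p_n$ and $q_{n+1}x-p_{n+1}$ also have opposite signs, the two contributions add in absolute value, which gives $|qx-p|\ge|q_nx-p_n|$. This step needs the most care with signs and edge cases, but it is standard.

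For (3), injectivity follows from (2): the sequence $(a_n)$ determines the limit $x$. The image lies in $\N\times(\N_{\ge1})^\N$ because $x_{n+1}>1$. For surjectivity, given any such sequence, the convergents form a Cauchy sequence by the estimate above, so they converge to some $x$. I will then check that $x$ is irrational and has the prescribed expansion, using that the tail values $y_n=\lim[a_n;a_{n+1},\dots]$ satisfy $y_n=a_n+1/y_{n+1}$ with $y_{n+1}>1$, which forces $\lfloor y_n\rfloor=a_n$.

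For (4), Lagrange's theorem, the easy direction is this: if the expansion is eventually periodic, then some tail satisfies $y=\frac{Py+P'}{Qy+Q'}$, a quadratic equation, and $x$ is a Möbius image of $y$ with integer coefficients, hence quadratic. The converse is the main obstacle. If $ax^2+bx+c=0$, substituting the Möbius relation shows that each $x_n$ satisfies $A_nx_n^2+B_nx_n+C_n=0$ with the same discriminant $b^2-4ac$. Using $|x-p_{n-1}/q_{n-1}|<1/q_{n-1}^2$, I will show that $A_n=q_{n-1}^2\,f(p_{n-1}/q_{n-1})$ is bounded independently of $n$, where $f$ is the quadratic polynomial; the same holds for $C_n=A_{n-1}$, and then $B_n$ is bounded via the discriminant. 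So only finitely many equations occur, hence only finitely many values of $x_n$, and some value repeats, $x_n=x_m$. Since the expansion of $x_n$ is its tail, this gives eventual periodicity.
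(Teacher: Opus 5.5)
The paper does not prove this proposition. It is recalled as classical background in the ``crash course'' on continued fractions, with no argument given, so there is no proof of the paper's to compare your route with. Your outline is the standard textbook proof, and it is correct. Its ingredients are the matrix product with the determinant identity, Euclid's algorithm for (1), the unimodular change of basis $(p,q)=u(p_n,q_n)+v(p_{n+1},q_{n+1})$ for best approximations, the tails $y_n$ with $y_{n+1}>1$ for surjectivity, and the bounded coefficients $A_n,B_n,C_n$ of fixed discriminant for Lagrange's theorem.

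Three small points are worth writing out explicitly.

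\emph{Best approximation.} The paper's phrasing is a first-kind statement: $q\le q_n$ implies $|x-p/q|\ge|x-p_n/q_n|$. It follows from your second-kind statement via $|x-p/q|=|qx-p|/q\ge|q_nx-p_n|/q_n$, but only when $q_n<q_{n+1}$, that is, for $n\ge1$ or $a_1\ge2$. At $n=0$ with $a_1=1$, the paper's phrasing is actually false. For $x=\varphi$, the integer $2$ is closer to $x$ than $p_0/q_0=1$, with the same denominator. So your formulation is the right one to prove.

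\emph{Surjectivity in (3).} Take the Cauchy property from the purely formal identity $\frac{p_{n+1}}{q_{n+1}}-\frac{p_n}{q_n}=\frac{(-1)^n}{q_nq_{n+1}}$. The estimate you cite involves $x$, which presupposes the limit you are trying to construct.

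\emph{Lagrange's theorem.} Finitely many triples $(A_n,B_n,C_n)$ give finitely many values $x_n$ because $A_nt^2+B_nt+C_n$ is never the zero polynomial: its discriminant is $b^2-4ac\neq0$.
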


	It is a long-standing open problem (the question was initially asked by C. Hermite in 1850) to generalize Lagrange's theorem to cubic, or more generally, algebraic numbers. This question is one of the seminal motivations of C. G. J. Jacobi for introducing a ``multidimensional continued fraction'' (on which we shall come back later).

	\subsection{Connection between Sturmian words and continued fractions}\label{subsect:connectionCF_sturmian}
	
	Sturmian words and continued fractions are connected through a renormalization process that we introduce now.
	\medskip
	
	Let $w$ be a Sturmian word. Since $w$ is not eventually periodic, it must contain both factors  $12$ and $21$. Since $p_w(2)=3$, $w$ admits a third factor of length $2$, which can be either $11$ \textbf{xor} (exclusive or) $22$. We say that $w$ is ``of type $1$''  if it contains $11$, and ``of type $2$'' otherwise.
	
	\medskip
	\noindent \emph{The renormalization process. } Let $i \in \{1,2\}$, and denote the other letter by $j$. Let $w$ be a Sturmian word of type $i$. Then, every occurrence of the letter $j$ in $w$ (except the first one when $w$ starts with $j$) is preceded by at least one occurrence of $i$. Let $R_i(w)$ denote the infinite word obtained from $w$ by erasing the $i$ that immediately precedes every $j$. Our renormalization process is then defined by
	\[ R(w) = \begin{cases} R_1(w) \text{ if $w$ is of type 1,} \\ R_2(w) \text{ otherwise.}\end{cases}\]

	\begin{example}
		It is useful to illustrate the renormalization process with an example. To this end, consider again the \emph{Fibonacci word}, which we temporarily assume is Sturmian.
		\[ w_{fibo} = 121121211211212112121121121211211212112121121121211212...\] 
		The Fibonacci word is of type $1$, hence
		\[ R(w_{fibo}) = R_1(w_{fibo}) = 212212122122121221212212212122122...\] 
		We claim that this new infinite word is also Sturmian.
	\end{example}
	
	\begin{theorem}	\label{th:stability_sturmian}
		If $w$ is a Sturmian word, then $R(w)$ is also a Sturmian word.
	\end{theorem}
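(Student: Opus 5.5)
The plan is to realise $w$ as the image of $R(w)$ under a morphism and then transfer the two defining properties (non-periodicity and at most one right-special factor per length) from $w$ to $R(w)$. Assume without loss of generality that $w$ is of type $1$, so $R(w)=R_1(w)$; the case of type $2$ is symmetric after exchanging the letters. Let $\sigma$ be the morphism $1\mapsto 1$, $2\mapsto 12$. Since every $2$ in $w$, except possibly a leading one, is preceded by a $1$, we have
\[
w=\sigma(R(w)) \quad\text{or}\quad w=2\,\sigma(R(w)')
\]
up to that leading letter. By Proposition~\ref{prop:basics}(iii) we may replace $w$ by $S(w)$ without changing its language, so we may assume $w=\sigma(R(w))$ exactly. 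Moreover, in a type-$1$ word we never see $22$, so $w$ factors uniquely into blocks $1$ and $12$.

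\emph{Step 1: lower bound.} We show that $R(w)$ is not eventually periodic. If $R(w)=u\,v^\omega$, then $w=\sigma(u)\,\sigma(v)^\omega$ would be eventually periodic, which contradicts Theorem~\ref{MH38}. Hence Theorem~\ref{MH38} gives $p_{R(w)}(n)\ge n+1$ for all $n$. In particular both letters occur in $R(w)$, so $R(w)$ is binary and $p_{R(w)}(1)=2$.

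\emph{Step 2: upper bound.} For a binary word, $p(n+1)-p(n)$ is the number of right-special factors of length $n$. So it suffices to show that $R(w)$ has at most one right-special factor of each length; then $p_{R(w)}(n)\le n+1$ by induction from $p_{R(w)}(0)=1$.

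Let $u$ be right-special in $R(w)$. Both $u1$ and $u2$ are factors, and each extends to the right by some letter. Since every image $\sigma(x)$ begins with $1$, the words $\sigma(u)11$ and $\sigma(u)12$ are factors of $w$. Thus $\sigma(u)1$ is right-special in $w$.

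Now suppose $u\neq v$ are right-special of the same length $n$, with say $|\sigma(u)|\le|\sigma(v)|$. By Proposition~\ref{prop:basics}(iv), all right-special factors of $w$ are suffixes of one another: the suffix of length $m$ of a right-special factor is right-special, and there is only one of each length. Hence $\sigma(u)1$ is a suffix of $\sigma(v)1$, and so $\sigma(u)$ is a suffix of $\sigma(v)$.

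\emph{Step 3: right-to-left decoding.} This is the main obstacle. We must show that "$\sigma(u)$ is a suffix of $\sigma(v)$" forces "$u$ is a suffix of $v$"; since $|u|=|v|$, this gives $u=v$, a contradiction.

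The plan is to parse $\sigma(v)$ from the right into the blocks $1$ and $12$: each $2$ absorbs the $1$ just before it, and every other $1$ is a block by itself. The only possible misalignment is at the left end of the suffix $\sigma(u)$, whose first letter is a $1$.

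If the first block of $\sigma(u)$ is $12$, then that $1$ is followed by $2$ inside $\sigma(v)$ as well, so it starts a $12$ block there. If the first block of $\sigma(u)$ is $1$, then inside $\sigma(v)$ it is followed by $1$, or it is the last letter; in either case it is a singleton block there too.

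So the block decompositions agree, and $u$ is a suffix of $v$. Combined with Step 1, this gives $p_{R(w)}(n)=n+1$ for all $n$.
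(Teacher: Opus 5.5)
Your reduction to the case $w=\sigma(R(w))$ is not justified. Proposition~\ref{prop:basics}(iii) tells you that $S(w)$ has the same language as $w$. But the word you must prove Sturmian is $R(w)$, and shifting changes it: if $w=2X$, then $R(w)=2\,R(X)=2\,R(S(w))$. Proving that $R(S(w))$ is Sturmian does not yet show that $2\,R(S(w))$ is, because prepending a letter can create new factors. For instance, if $w$ begins with $212$, then $R(w)$ begins with $22$, and you would still need $22$ to occur in $R(S(w))$. This is precisely the point of Exercise~\ref{exo:resubstitute}, on which the paper's proof relies. There, $w=\sigma_1(R_1(w))$ may fail, yet $x\in\lan(R_1(w))$ still implies $\sigma_1(x)\in\lan(w)$, and this uses recurrence.

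The simplest repair is to prepend instead of shift. If $w=2X$, then $R(1w)=R(12X)=2\,R(X)=R(w)$. Moreover $\lan(1w)=\lan(w)$: every prefix $p$ of $w$ reoccurs at some position $k\geq 1$ (Proposition~\ref{prop:basics}(ii)), and there it must be preceded by $1$, since $p$ begins with $2$ and $22\notin\lan(w)$. Thus $1w$ is a type-$1$ Sturmian word with $1w=\sigma(R(w))$ exactly, and your Steps 1--3 go through for it unchanged.

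Apart from this, your argument is correct and follows the paper's route. You use Morse--Hedlund for $p_{R(w)}(n)\geq n+1$, and you bound the number of right-special factors per length by sending a right-special $u$ of $R(w)$ to the right-special factor $\sigma(u)1$ of $w$. Only the endgame differs:
\begin{itemize}
\item the paper writes $u=p1s$, $v=r2s$ with $s$ the longest common suffix, and notes that $1\sigma(s)1$ and $2\sigma(s)1$ are distinct right-special factors of $w$ of the same length;
\item you use that the right-special factors of $w$ form a single suffix chain, and then decode $\sigma(u)$ inside $\sigma(v)$ from the right.
\end{itemize}
These are the same idea read in opposite directions; yours isolates the suffix-code property of $\{1,12\}$, while the paper's needs no decoding step.

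Your Step~3 can also be shortened. The words $\sigma(u)$ and $\sigma(v)$ each contain exactly $|u|=|v|$ letters $1$, so in $\sigma(v)=z\,\sigma(u)$ the word $z$ contains no $1$. Since $\sigma(v)$ begins with $1$, $z$ is empty, and the unique block factorization gives $u=v$.
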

	
	Therefore, we can iterate $R$ infinitely many times.  We define the ``run-length'' sequence $(b_n)_n$ of a Sturmian word $w$ as the sequence recording the number of times the operation $R_1$ (resp. $R_2$) is repeated before the next application of $R_2$ (resp. $R_1$). Specifically, if  $(R_{i_n})_{n\in \N}$ denotes the successive values taken by the operator $R$ when iterated on $w$, we have
	 \[i_0i_1i_2i_3i_4i_5i_6...=1^{b_0}2^{b_1}1^{b_2}2^{b_3}...\]
	(Note that, by convention, $b_0$ denotes the number of consecutive iterations of $R_1$ before the first application of $R_2$.)

	Since $(R_{i_n})_n$ contains both operators $R_1$ and $R_2$ infinitely many times (see Exercise \ref{exo_UR}), its run-length $(b_n)$ is an \emph{infinite} sequence of \emph{integers}. More precisely, we must have $b_k>0$ for every $k\geq 1$, and $b_0=0$ if and only if the Sturmian word $w$ under consideration is of type $2$.
	
	\begin{example}\label{ex:fibo_derivations} The first iterations of the operator $R$ on the Fibonacci word are:
		\[ \begin{array}{lllll}
			w &= w_{fibo} && = 121121211211212112121121121211211212112...  \quad &\text{type 1}\\
			
			w' &= R(w) &= R_1(w) &= 212212122122121221212212...  & \text{type 2}\\
			
			w'' &= R^2(w) &= R_2(w') &= 121121211211212...  & \text{type 1}\\
			
			w''' & = R^3(w) &= R_1(w'') &= 212212122...  & \text{type 2}\\
			
			\text{etc.} &&&&
		\end{array}\]
		
		In fact, we could prove that $w''=w_{fibo}$ (this is a particularity of the Fibonacci word), and deduce that the run-length of $w_{fibo}$ is the constant sequence equal to $1$.
	\end{example}

	\begin{remark}
		By definition, a ``substitution'' is a function from the alphabet to the set of finite words.  It extends to finite and infinite words as a morphism  with respect to concatenation. (For example, the image of the finite word $aaba$ by the substitution $s: a\mapsto aab$, $b\mapsto ba$ is 
		$s(aaba) = s(a)s(a)s(b)s(a) = aabaabbaaab$.) Now, if $\sigma_1$ and $\sigma_2$ denote the particular substitutions
		\[\begin{array}{lllll}
			\sigma_1 : & 1 \mapsto 1 & \hspace{1cm} & \sigma_2 : & 1 \mapsto 21\\
			& 2 \mapsto 12 &&& 2 \mapsto 2,
		\end{array}\]
		then observe, in Example \ref{ex:fibo_derivations}, that $w$ can be reconstructed from its image $w'$ by applying $\sigma_1$. Similarly, the infinite words $w'$ and $w''$ can be reconstructed from their respective images $w''$ and $w'''$ by $w'=\sigma_2(w'')$ and  $w''=\sigma_1(w''')$.  
		These two substitutions $\sigma_1$ and $\sigma_2$ will play a central role in the sequel.
	\end{remark}
	
	\begin{exercise}\label{exo:resubstitute} Let $w$ be a Sturmian word of type $i$.\\
		1) Prove that the equality $w=\sigma_i(R_i(w))$ does not always hold. (Hint: consider $S(w_{fibo})$.)\\
		2) However, prove that if $u$ is a factor of $R_i(w)$, then $\sigma_i(u)$ is a factor of $w$.
	\end{exercise}

	\begin{proof}[Proof of Theorem \ref{th:stability_sturmian}]
		Let $w$ be a Sturmian word of type $i$, the other letter being again denoted by $j$. We want to prove that $w':=R(w)=R_i(w)$ is also a Sturmian word. First, observe that $w'$ is eventually periodic if and only if $w$ is eventually periodic---which it is not. Therefore, by the Morse--Hedlund theorem \ref{MH38}, we must have $p_{w'}(n)\geq n+1$ for every $n\in\N$.
		
		For the converse inequality, we will prove that, for every $n\in\N$, $w'$ admits at most one right-special factor of length $n$. (Therefore, using again the argument in the proof of Proposition \ref{prop:basics}, Assertion (iv), we have $p(n+1)-p(n)\leq 1$ for all $n\in \N$, whence $p(n)\leq n+1$.) We argue by contradiction.
		Assume that $w'$ contains  two distinct right-special factors of equal length, $u$ and $v$. Then the four extensions $ui$, $uj$, $vi$ and $vj$ are factors of $w'$. By Exercise \ref{exo:resubstitute}, their images under the substitution $\sigma_i$ are factors of the original Sturmian word $w$: $\sigma_i(ui)=\sigma_i(u)i$, $\sigma_i(uj)=\sigma_i(u)ij$, $\sigma_i(vi)=\sigma_i(v)i$, and $\sigma_i(vj)=\sigma_i(v)ij$. We can be slightly more precise by claiming that the two words $\sigma_i(u)ii$ and $\sigma_i(v)ii$ are factors of $w$. Indeed, since $ui$ is a factor of $w'$, either $uii$ or $uij$ (or both) is also a factor of $w'$, from which it follows that either $\sigma_1(u)ii$ or $\sigma_1(u)iij$ is a factor of $w$; in either case, $\sigma_1(u)ii$ is a factor of $w$---the argument being analogous for   $\sigma_1(v)ii$.
		
		At this point, we have established that the two finite words $\sigma_i(u)i$ and $\sigma_i(v)i$, which are distinct since $u\neq v$, form two right-special factors of the Sturmian word $w$. This is not yet a contradiction, since these two factors may not have the same length. However, since $u$ and $v$ are equally-long distinct words, they can always be factorized as (up to exchanging $u$ and $v$) 
		\[\begin{cases} u = pis, \\
			v = rjs, \end{cases}\]
		where $p, r$ and $s$ are possibly empty finite words. Therefore, the suffixes of their images by $\sigma_i$, extended by the letter $i$, namely $i\sigma_i(s)i$ and $j\sigma_i(s)i$ are two distinct, equally-long, right-special factors of the Sturmian word $w$. This contradicts Proposition \ref{prop:basics}.
	\end{proof}

	\begin{exercise}\label{exo_UR} Let $w$ be a Sturmian word.\\
		1) Prove that both operators $R_1$ and $R_2$ occur infinitely many times in the successive renormalizations of $w$.\\
		2) Prove that $w$ is \emph{uniformly recurrent}. (Definition: $w$ is ``uniformly recurrent'' if for every factor $u$ of $w$, there exists an integer $N$ such that any two successive occurrences of $u$ in $w$ are separated by at most $N$ letters.)
	\end{exercise}

	The next theorem establishes the connection between Sturmian words, our renormalization process, and continued fractions.

	\begin{theorem}\label{th:connection_CF}
		Let $w$ be a Sturmian word. Denote by $(b_n)_{n\in\N}$ its run-length  under  iteration of the renormalization process $R$. Then
		\begin{enumerate}
			\vspace{-0.2cm}\item[\emph{(i)}]  $w$ admits letter frequencies (hereafter denoted by $f_1,f_2$);
			\vspace{-0.2cm}\item[\emph{(ii)}]  $[b_0;b_1,b_2,b_3,\cdots]$ is the continued fraction expansion of the ratio $\frac{f_1}{f_2}$;
			\vspace{-0.2cm}\item[\emph{(iii)}] the ratio $\frac{f_1}{f_2}$ is an irrational number.
		\end{enumerate}
	\end{theorem}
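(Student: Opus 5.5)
The plan is to track how letter counts transform under one renormalization step, and then iterate along the whole run-length sequence. Let $w$ be Sturmian of type $i$, let $j$ be the other letter, and put $w'=R_i(w)$. By Exercise~\ref{exo:resubstitute}, $w$ and $\sigma_i(w')$ agree up to a bounded prefix (at most one letter). Let $u$ be a long prefix of $w'$, and write $|u|_1,|u|_2$ for its letter counts. Then $\sigma_i(u)$ is, up to $O(1)$ letters, a prefix of $w$. Its counts are
\[
|\sigma_i(u)|_i=|u|_i+|u|_j,\qquad |\sigma_i(u)|_j=|u|_j .
\]
So the ratio $\rho=|\cdot|_i/|\cdot|_j$ transforms as $\rho(w\text{-prefix})=\rho(w'\text{-prefix})+1$, up to errors of order $O(1/|u|)$. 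Consequently, if $w$ is of type $1$, then $f_1/f_2$ for $w$ equals $1+f_1/f_2$ for $w'$. If $w$ is of type $2$, then $f_2/f_1$ for $w$ equals $1+f_2/f_1$ for $w'$.

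For (i) and (ii), I iterate this along the blocks of the run-length. Write $w^{(k)}$ for the word reached after the first $b_0+\cdots+b_{k-1}$ renormalizations. Set $x_k=f_1/f_2$ of $w^{(k)}$ when $k$ is even and $x_k=f_2/f_1$ when $k$ is odd; heuristically these are the ratios of letter frequencies. Then
\[
x_k=b_k+\frac1{x_{k+1}},
\]
with $x_k\geq 1$ for $k\ge1$ because $b_k\ge1$. This is exactly the continued fraction recursion. Existence of frequencies should not be assumed, so I will work with actual counts instead. Unwinding the blocks expresses $w$, up to bounded prefixes, as the image of $w^{(k)}$ under the composite substitution $M_k=\sigma_1^{b_0}\sigma_2^{b_1}\cdots$. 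The abelianization of $M_k$ is the product of the matrices $\begin{pmatrix}1&1\\0&1\end{pmatrix}^{b_0}\begin{pmatrix}1&0\\1&1\end{pmatrix}^{b_1}\cdots$, whose columns are $(p_k,q_k)$-type vectors with $p_k/q_k=[b_0;b_1,\dots,b_k]$. Every prefix of $w$ is, up to a bounded error, a concatenation of the two words $M_k(1)$ and $M_k(2)$. Hence the ratio $|\text{prefix}|_1/|\text{prefix}|_2$ eventually lies between the two ratios given by the columns, which are consecutive convergents. Their gap is $1/(q_kq_{k-1})\to0$, because $b_k\ge1$ forces $q_k\to\infty$ (Exercise~\ref{exo_UR}). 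The ratio therefore converges to $[b_0;b_1,\dots]$, which gives (i) and (ii) together.

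Part (iii) then follows from Proposition~\ref{prop:reminder_CF}(1). The expansion $[b_0;b_1,\dots]$ is infinite with $b_k\ge1$ for $k\ge1$, and by uniqueness (Proposition~\ref{prop:reminder_CF}(3)) it is the continued fraction of $f_1/f_2$, which is therefore irrational.

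The main obstacle is the error control in the step ``every prefix of $w$ is a concatenation of $M_k(1)$ and $M_k(2)$ up to a bounded error''. This needs an induction on Exercise~\ref{exo:resubstitute}, part 2, tracking the one-letter discrepancy at each desubstitution. I expect those discrepancies to be absorbed into the prefix: only a bounded number of letters (at most $|M_k(1)|+|M_k(2)|$ for fixed $k$) are lost, which is negligible as the prefix length tends to infinity.
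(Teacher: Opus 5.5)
Your proof is correct, but it takes a genuinely different route from the paper.

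\textbf{How the paper argues.} It gets (i) from Boshernitzan's theorem: uniform recurrence together with $\liminf p_w(n)/n=1$ forces letter frequencies. For (ii), it applies this to every renormalized word, shows that $f_1/f_2$ drops by exactly $1$ under $R_1$, and identifies $b_0=\lfloor f_1/f_2\rfloor$. That last step needs the strict inequality $f''_1/f''_2<1$, which the paper delegates to Exercise~\ref{exo:not_one_half}.

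\textbf{How you argue.} You get (i) and (ii) at once by a nested-interval (cone-nesting) argument. You write $w$, up to a bounded prefix, as the image of $w^{(k)}$ under the composite substitution. Abelianization then traps the letter-count ratio of long prefixes between two consecutive convergents, and the gap $1/(q_kq_{k-1})$ tends to $0$. This removes both the external black box and the equality case $f_1=f_2$. You never identify $b_0$ as a floor: you identify the limit directly as the value of the infinite continued fraction, and uniqueness of expansions of irrationals does the rest. You also get an explicit convergence rate and never need frequencies of the intermediate words $w^{(k)}$.

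\textbf{What the paper's route buys.} It is conceptually transparent: one renormalization step is literally one step of the algorithm \texttt{CF}. Through Boshernitzan it also yields more than required, namely frequencies of all factors.

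\textbf{Two small corrections.} First, the relation you attribute to Exercise~\ref{exo:resubstitute} is not what that exercise states. It does follow immediately from the definition of $R_i$: for $w$ of type $i$, $\sigma_i(R_i(w))\in\{w,\ i\cdot w\}$. Second, iterating this relation is all your ``main obstacle'' requires, so no induction on part~2 of the exercise is needed.
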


	By definition, the ``frequency of the letter $a$'' in an infinite word $w$ is the limit, if it exists, of the proportion of $a$ in growing prefixes of $w$. Specifically, it is the limit (again, if it exists)
	\[     f_a = \underset{n\to+\infty}{\lim}\,\frac{|\pref_n(w)|_a}{n},
	\]
	where $|\pref_n(w)|_a$ denotes the number of $a$ among the first $n$ letters of $w$.
	
	\begin{example}\label{ex:fibonacci_frequencies}
		Since the run-length of the Fibonacci word is the infinite sequence constant equal to $1$, the quotient of its letter frequencies $f_1/f_2$ is the golden ratio.
	\end{example}

	\begin{proof}[Proof of Theorem \ref{th:connection_CF}, assertion (i)] 
		Let $w$ be a Sturmian word.  Since $w$ is uniformly recurrent (as proven in Exercise \ref{exo_UR}) and satisfies
		\[
		\underset{n\to+\infty}{\liminf}\,\frac{p_w(n)}{n}=1,
		\]
		it follows from a theorem by Boshernitzan from 1984 that $w$ admits letter frequencies \cite{Bos84} (see also \cite[Section~7.3]{FM10}). \end{proof}
	
	\begin{remark}
		Classically, the existence of letter frequencies in Sturmian words is derived from an abelian property called ``balancedness'', which we choose not to develop in this lecture. Boshernitzan's theorem enables us to establish the same result, directly from the complexity.
	\end{remark}

	\begin{proof}[Proof of Theorem \ref{th:connection_CF}, assertions (ii) and (iii)] Let $w$ be a Sturmian word. Denote by $f_1$ and $f_2$ its letter frequencies. \\ (ii)  By definition of the run-length sequence $(b_n)_n$ of $w$, the first term $b_0$ is the number of times the operator $R$ initially acts as $R_1$ before switching to $R_2$. Step 1: initial iterations of $R_1$. Assume that $b_0\neq0$ (otherwise, we bypass this step and work directly with $b_1$). Let $w'=R(w)=R_1(w)$. Since $w$ and $w'$ are Sturmian words (Theorem \ref{th:stability_sturmian}), they admit letter frequencies, respectively denoted by $f_1$, $f_2$, $f'_1$ and $f'_2$. We claim that these letter frequencies satisfy the relation
		\begin{equation}\label{eq:ff'}
			\frac{f'_1}{f'_2} = \frac{f_1}{f_2} -1. 
		\end{equation} Indeed, it suffices to count the number of letters in the image of the prefixes of length $n$, $n \in \N$,  by the operator $R_1$:
		\[ \begin{cases} \vert R_1(\pref_n(w)) \vert_2 = \vert \pref_n(w) \vert_2 
			\\\vert R_1(\pref_n(w)) \vert_1 = \vert \pref_n(w) \vert_1 - \vert \pref_n(w) \vert_2  \text{ or }  \vert \pref_n(w) \vert_1 - \vert \pref_n(w) \vert_2 + 1
		\end{cases}\]
		(The number of occurrences of the letter $2$ is unchanged; we remove one $1$ before each occurrence of the letter $2$, except the possible initial $2$.) Since $(R_i(\pref_n(w)))_n$ is a sequence of growing prefixes of $w'$, we have
		\[ \frac{\vert R_1(\pref(w)) \vert_1}{\vert R_1(\pref(w)) \vert} \longrightarrow_{n\rightarrow \infty} f'_1 
		\]
		and a symmetrical expression for $f'_2$ (we recall that $\vert u \vert$ denotes the length of a finite word $u$). Therefore
		\[\frac{f'_1}{f'_2} = \lim_n \frac{\vert R_1(\pref(w)) \vert_1}{\vert R_1(\pref(w)) \vert_2} = \lim_n \frac{\vert \pref(w) \vert_1 - \vert \pref(w) \vert_2}{\vert \pref(w) \vert_2} = \frac{f_1}{f_2} -1 \]
		which establishes \eqref{eq:ff'}.
		Now, the operation $R_1$ being repeated exactly $b_0$ times before switching to $R_2$, the letter frequencies of $w'':=R^{b_0}(w) = (R_1)^{b_0}(w)$ must  satisfy
		\begin{equation}\label{eq:f''}
			\begin{cases}
				f''_1/f''_2 = f_1/f_2 - b_0 \\
				f''1/f''2 < 1
			\end{cases}
		\end{equation}
		(In the second expression, the large inequality derives from the fact that $w''$ is of type $2$, while the equality case is forbidden by Exercise \ref{exo:not_one_half} below.)
		The expression \eqref{eq:f''} is equivalent to
		\[
		\begin{cases}
			b_0 = \lfloor f_1/f_2 \rfloor\\
			f''_1/f''_2 = \{ f_1/f_2\} 
		\end{cases}
		\]
		(which already resembles the continued fraction...)
		\bigskip
		
		\noindent Step 2: iterating $R_2$. We now apply the operator $R_2$ to the new Sturmian word $w''$,  exactly $b_1$ times. We proceed as in Step 1, except that the roles of the letters $1$ and $2$ are exchanged since $w''$ is of type $2$. Thus, instead of considering the quotient $\frac{f_1}{f_2}$, we consider the quotient
		\[
		\frac{f''_2}{f''_1}=\frac{1}{\big\{\frac{f_1}{f_2}\big\}}.
		\]
		It is now clear that we are  performing the algorithm \texttt{CF} (written in Section \ref{subsect:crash_courseCF}) on the input $x=f_1/f_2$, which displays the continued fraction of $x$.

		\medskip
		\noindent (iii) Since $w$ is Sturmian, its sequence of renormalizations $(R_n)_n$ contains infinitely many occurrences of both operators $R_1$ and $R_2$ (Exercise \ref{exo_UR}). This implies that its run-length $(b_n)$---which we proved is the continued fraction expansion of $f_1/f_2$---is infinite. It follows from the classical Proposition \ref{prop:reminder_CF} that the ratio $f_1/f_2$ is an irrational number.
	\end{proof}

	\begin{exercise}\label{exo:not_one_half}  Prove, without using Theorem \ref{th:connection_CF} assertions (ii) and (iii) (otherwise the theorem's proof is circular), that no Sturmian word has letter frequencies $f_1=f_2=1/2$. \\
		\emph{Hint: if $w$ is a Sturmian word of type $i$, partition $w$ into consecutive blocks of length $N$, where the integer $N$ is chosen such that all factors of length $N$ contain at least two occurrences of the word $ii$. You will need the property of uniform recurrence}. 
	\end{exercise}
	
	
	\subsection{An efficient construction of Sturmian words}

	In this subsection, we prove that Sturmian words exist, and that they are uncountably many: essentially one per positive irrational number. These results rely on an efficient construction of Sturmian words with the continued fraction.
	
	\medskip
	
	The next theorem presents the construction. It uses the substitutions $\sigma_1$ and $\sigma_2$, which appeared in Section \ref{subsect:connectionCF_sturmian} as reverses of the renormalization process $R$. They were defined by
	\[\begin{array}{lllll}
		\sigma_1 : & 1 \mapsto 1 & \hspace{1cm} & \sigma_2 : & 1 \mapsto 21\\
		& 2 \mapsto 12 &&& 2 \mapsto 2.
	\end{array}\] 
	
	\begin{theorem}\label{th:standard_sturmian}
		Let $(s_n)_n\in\{\sigma_1,\sigma_2\}^\N$ be a sequence in which both substitutions $\sigma_1$ and $\sigma_2$ occur infinitely often. Then\\
		$(i)$ the two sequences of finite words $(s_1\circ s_2\circ\cdots\circ s_n(1))_n$ and $(s_1\circ s_2\circ\cdots\circ s_n(2))_n$ converge, as $n\to\infty$, to the same infinite word $w_0$, which is Sturmian;\\
		$(ii)$ the run-length of the substitution sequence $(s_n)_n$ (defined, again, as the number of times $\sigma_1$ is repeated before the first apparition of $\sigma_2$, then the number of times $\sigma_2$ is repeated before the next apparition of $\sigma_1$, and so on)  is exactly the continued fraction expansion of the ratio $\frac{f_1}{f_2}$, where $f_1$ and $f_2$ denote the letter frequencies in the Sturmian word $w_0$.
	\end{theorem}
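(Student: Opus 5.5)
We prove (i) in three stages: convergence, complexity at most $n+1$, and non-periodicity. For the frequency statement (ii) we then run the argument of Theorem~\ref{th:connection_CF} in reverse.

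\emph{Convergence.} Write $\tau_n:=s_1\circ\cdots\circ s_n$. The key observation is that $\sigma_1(1)=1$ is a prefix of $\sigma_1(2)=12$, and that $\sigma_2(2)=2$ is a prefix of $\sigma_2(1)=21$. So for every $n$, one of $\tau_n(1),\tau_n(2)$ is a prefix of the other. Moreover, $\tau_{n+1}(a)=\tau_n(s_{n+1}(a))$ begins with $\tau_n(b)$, where $b$ is the first letter of $s_{n+1}(a)$. Hence both sequences are eventually nested in a common chain of prefixes.

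It remains to see that the lengths $|\tau_n(1)|$ and $|\tau_n(2)|$ tend to infinity. Each time $\sigma_1$ occurs at position $n+1$, we get $|\tau_{n+1}(2)|=|\tau_n(1)|+|\tau_n(2)|$. Symmetrically, each occurrence of $\sigma_2$ increases the length of the image of $1$. Since both substitutions occur infinitely often, the lengths tend to infinity, and the limits coincide. Call the common limit $w_0$. Note also that $w_0=\tau_n(w^{(n)})$, where $w^{(n)}$ is the limit built from the shifted sequence $(s_{n+k})_k$.

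\emph{Complexity.} Here I will use Proposition~\ref{prop:basics}(iv) in reverse. It suffices to show that $w_0$ has at most one right-special factor of each length, since this gives $p(n)\le n+1$. Conversely, $p\ge n+1$ follows from Theorem~\ref{MH38} once non-periodicity is known.

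The bound on right-special factors should come from a desubstitution argument mirroring the proof of Theorem~\ref{th:stability_sturmian}. Suppose $u\neq v$ are right-special of equal length, with $w_0=\sigma_i(w^{(1)})$. Choose them minimal in length among all words $w^{(k)}$ simultaneously. Each factor of $\sigma_i(w')$ can be desubstituted: the letter $j$ only appears in $\sigma_i(j)=ij$ (for $i=1$), and right-specialness forces the factor to end at a block boundary. This gives two shorter distinct right-special factors of $w^{(1)}$, or an equal-length pair unless the decomposition shortens.

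I expect this to be the main obstacle. The danger is that the length need not strictly decrease when $u,v$ consist mostly of $i$'s. I would first handle this by using that $\sigma_j$ occurs later: iterating enough steps forces a genuine shortening, and then induction on length closes the argument. An alternative that might be cleaner is to prove balancedness of $w_0$ by induction on the prefixes $\tau_n(a)$ and invoke the classical characterization. Since the notes avoid balancedness, I will try the right-special route first.

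\emph{Non-periodicity and (ii).} If $w_0$ were eventually periodic, its letter frequencies would exist and be rational. Frequencies do exist: the column vectors of the product of incidence matrices $M_{s_1}\cdots M_{s_n}$ converge projectively, because both matrices $\begin{pmatrix}1&1\\0&1\end{pmatrix}$ and $\begin{pmatrix}1&0\\1&1\end{pmatrix}$ appear infinitely often. The resulting direction $(f_1,f_2)$ is computed exactly as in the proof of Theorem~\ref{th:connection_CF}(ii). Substituting by $\sigma_1$ gives $f_1/f_2=f'_1/f'_2+1$, and substituting by $\sigma_2$ gives $f_2/f_1=f'_2/f'_1+1$. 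So the run-lengths are the partial quotients produced by the algorithm \texttt{CF}. The expansion is infinite, hence $f_1/f_2$ is irrational by Proposition~\ref{prop:reminder_CF}. Therefore $w_0$ is not eventually periodic, which completes (i), and the computation just performed is exactly (ii).
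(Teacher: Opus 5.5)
Your proposal has a genuine gap in the upper bound $p_{w_0}(n)\le n+1$. You never actually prove that $w_0$ has at most one right-special factor of each length. The obstacle you anticipate comes from a misstated desubstitution step: right-specialness does \emph{not} force $u$ to end at a block boundary. Take $s_1=\sigma_1$ and $|u|\ge 2$. Since $22$ does not occur in $\sigma_1(w^{(1)})$, $u$ ends with $1$. In an occurrence of $u2$, this final $1$ is the first letter of a block $\sigma_1(2)=12$; in an occurrence of $u1$, it is a whole block $\sigma_1(1)=1$. What is true is that $u$ \emph{minus its last letter} ends at a block boundary in both occurrences. Parsing it backwards (each $2$ closes a block $12$, every other $1$ is a block $1$), it desubstitutes uniquely, up to a possible leading $2$ coming from a truncated block. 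This gives a word $u'$ with $|u'|\le|u|-1$ such that $u'1$ and $u'2$ are factors of $w^{(1)}$. So the length always drops by at least one: $1^k$ simply yields $1^{k-1}$, and there is no need to wait for $\sigma_2$. You must also check that $u'$ and $v'$ are not suffixes of one another. If $u'$ were a suffix of $v'$, then $u$, being a suffix of $\sigma_1(u')1$, would be a suffix of $v$, hence $u=v$ since $|u|=|v|$. Cropping the longer one then gives two distinct right-special factors of $w^{(1)}$ of equal length between $1$ and $|u|-1$ (symmetrically when $s_1=\sigma_2$). Hence a minimal counterexample over all $w^{(k)}$ has length $1$. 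That would mean $11$ and $22$ both occur in some $w^{(k)}=s_{k+1}(w^{(k+1)})$, which is impossible. This is the desubstitution argument of the paper. As written, ``iterating enough steps forces a genuine shortening'' and the balancedness fallback are not proofs.

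The rest of your plan is sound and genuinely differs from the paper. The paper obtains $p_{w_0}(n)\ge n+1$ by exhibiting arbitrarily long right-special factors (both $11$ and $12$ occur in $\sigma_1\circ\sigma_2^k\circ\sigma_1(2)$). It obtains (ii) by checking that the renormalization $R$ undoes $s_1$ and then invoking Theorem~\ref{th:connection_CF}, hence Boshernitzan's theorem and Exercise~\ref{exo:not_one_half}. You instead get the frequencies directly from the nested cones $M_{s_1}\cdots M_{s_n}\R_{\ge 0}^2$. These shrink to a ray because the determinant is $1$ and both column norms tend to infinity. You then deduce non-periodicity from irrationality, which is more self-contained. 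Two details should be written out:
\begin{enumerate}
\item Existence of frequencies from the shrinking cones: write a long prefix as $\tau_n(x)q$ with $|q|<\max_a|\tau_n(a)|$, and let the prefix length tend to infinity with $n$ fixed.
\item The fractional parts produced along the way must lie strictly in $(0,1)$; this is needed to identify the run-lengths with the output of \texttt{CF}. It follows from the positivity of all frequencies $f^{(k)}_a$, since products of incidence matrices become positive as soon as both substitutions have occurred.
\end{enumerate}
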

	
	\noindent (The topology we use is the Tychonoff topology for the product set $\{1,2\}^{\N}$: a sequence of finite or infinite words $(u_n)_n$ converges to an infinite words $w$ if the words $u_n$ share larger and larger prefixes with $w$.)

	\begin{example} \label{ex:def_fibo} Let $(s_n)_n=\sigma_1,\sigma_2,\sigma_1,\sigma_2,\sigma_1,\sigma_2,\ldots$ be the periodic sequence with period $(\sigma_1,\sigma_2)$. Then, the sequence of finite words $u_n := (s_1\circ s_2\circ\cdots\circ s_n(1))_n$, which starts with 
		\[    \begin{array}{l}
			u_0 = u_1 = 1 \\
			u_2 = u_3 = 121 \\
			u_4 = u_5 = 12112 \\
			u_5 = u_6 = 1211212112112 \\
			u_7 = u_8 = 1211212112112121121211211212112112 \\
			...  \end{array}
		\]
		converges to a Sturmian word $w_0$. This Sturmian word $w_0$ is called ``Fibonacci word''--- it is the example of a Sturmian word that we mentioned in advance in this lecture. As we announced in Examples \ref{ex:fibo_derivations} and \ref{ex:fibonacci_frequencies}, the ratio of its letter frequencies is the golden ratio. Indeed, the run length of the substitution sequence $(s_n)_n$ is the constant sequence, equal to $1$; therefore, $f_1/f_2 = [1;1,1,1,1,1,1,1,...] = \varphi$. The following exercise justifies its name ``Fibonacci''.
	\end{example}
	
	\begin{exercise}
		Let $(v_n)_n$ be the sequence of finite words defined by 
		\[\begin{cases}
			v_0 = 2\\ v_1 = 1\\  v_{n+2} = v_{n+1}+v_n \text{ for every } n\geq 0,
		\end{cases}\] where the symbol $+$ denotes the concatenation operation. Prove that $(v_n)_n$ converges to the Fibonacci word.
	\end{exercise}

	\begin{definition} The Sturmian words obtained in Theorem \ref{th:standard_sturmian} are called \emph{standard Sturmian words}. The sequence of substitutions $(s_n)_n \in \{\sigma_1,\sigma_2\}^{\N}$ from which a standard Sturmian word is constructed is called its \emph{directive sequence}. 
	\end{definition}

	A natural question is: is every Sturmian word standard, that is, is every Sturmian word the limit of a sequence of iterated images by the substitutions $\sigma_1$ and $\sigma_2$? The answer is `no'. However,  every Sturmian word is  related to a standard Sturmian word.
	
	\begin{exercise}\label{exo:subshif}
		1) Prove that there exist non-standard Sturmian words.\\
		2) However, prove that for every Sturmian word $w$, there exists a unique standard Sturmian word $w_0$ such that $w$ belongs to the ``subshift'' of $w_0$
		\[
		w\in\overline{\{S^n(w_0)\,|\,n\in\N\}},
		\]
		or equivalently, such that $w$ and $w_0$ admit the same set of factors.
	\end{exercise}
	
	The next result is a corollary of Theorem \ref{th:standard_sturmian}. It states that there exist uncountably many Sturmian words, essentially one per positive irrational number.
	
	\begin{corollary}
		For every positive irrational number $x$, there exists a unique standard Sturmian word $w_0$ whose letter frequencies satisfy $\frac{f_1}{f_2}=x$.
	\end{corollary}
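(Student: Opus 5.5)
We prove existence and uniqueness separately, both via the bijection of Proposition~\ref{prop:reminder_CF}(3) and the two halves of Theorem~\ref{th:standard_sturmian}.

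\emph{Existence.} Let $x>0$ be irrational, and write its continued fraction expansion $x=[a_0;a_1,a_2,\ldots]$, infinite by Proposition~\ref{prop:reminder_CF}(1). Here $a_0\geq 0$ and $a_k\geq 1$ for $k\geq 1$. Define the directive sequence
\[
(s_n)_n=\sigma_1^{a_0}\,\sigma_2^{a_1}\,\sigma_1^{a_2}\,\sigma_2^{a_3}\cdots .
\]
Since every $a_k$ with $k\geq1$ is positive and there are infinitely many of them, both $\sigma_1$ and $\sigma_2$ occur infinitely often. By construction, the run-length of this sequence is exactly $(a_n)_n$; when $a_0=0$ the sequence simply starts with $\sigma_2$, matching the paper's convention for $b_0$. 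Theorem~\ref{th:standard_sturmian}(i) gives a standard Sturmian word $w_0$. Theorem~\ref{th:standard_sturmian}(ii) says that $[a_0;a_1,\ldots]$ is the continued fraction expansion of $f_1/f_2$, so $f_1/f_2=x$.

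\emph{Uniqueness.} Let $w_0$ and $w_0'$ be standard Sturmian words with directive sequences $(s_n)$ and $(s'_n)$, and suppose both have frequency ratio $x$. By Theorem~\ref{th:standard_sturmian}(ii), the run-lengths of $(s_n)$ and $(s'_n)$ are both the continued fraction expansion of $x$. This expansion is uniquely determined by $x$, since the algorithm \texttt{CF} is deterministic and Proposition~\ref{prop:reminder_CF}(3) is a bijection. So the two run-lengths coincide. A sequence in $\{\sigma_1,\sigma_2\}^\N$ in which both letters occur infinitely often is determined by its run-length, given the convention that $b_0$ counts initial $\sigma_1$'s and may be zero. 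Hence $(s_n)=(s'_n)$. The limit in Theorem~\ref{th:standard_sturmian}(i) depends only on the directive sequence, so $w_0=w_0'$.

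\emph{Main point of care.} The delicate step is the last one. A standard Sturmian word is defined as a limit obtained from \emph{some} directive sequence. Uniqueness of $w_0$ therefore only requires that two words with the same ratio have the same directive sequence, and this follows as above. We do not need the converse claim that a standard word has only one directive sequence. We must also check the boundary case $a_0=0$, where $x<1$ and $w_0$ is of type~$2$. There the convention $b_0=0$ is consistent with the expansion $[0;a_1,\ldots]$, and the remaining work is bookkeeping with the two theorems.
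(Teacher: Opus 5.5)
Your proof is correct and follows the paper's argument: the paper composes the continued-fraction bijection of Proposition~\ref{prop:reminder_CF} with the run-length bijection onto directive sequences (in which both substitutions occur infinitely often) and then invokes Theorem~\ref{th:standard_sturmian}, which is exactly what your existence and uniqueness halves spell out. You also note explicitly that uniqueness only requires every directive sequence yielding ratio $x$ to have run-length equal to the expansion of $x$; this is a useful precision that the paper's terse ``bijection'' phrasing leaves implicit.
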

	
	\begin{proof}
		By Proposition \ref{prop:reminder_CF}, the continued fraction is a bijection between the set of positive irrational numbers and the set of infinite sequences of integers $\N\times(\N_{\geq 1})^{\N}$, which is itself in bijection, through the notion of ``run length'', with the set of sequences of substitutions $\{\sigma_1,\sigma_2\}^{\N}$ in which $\sigma_1$ and $\sigma_2$ appear infinitely many times. 
		Therefore, by Theorem \ref{th:standard_sturmian}, for every positive irrational number $x$, there exists a unique standard Sturmian word whose letter frequencies satisfy $f_1/f_2 =x$.
	\end{proof}
	
	We conclude this subsection with a remark and the proof of Theorem \ref{th:standard_sturmian}.
	
	\begin{remark}  The construction of Sturmian words  presented in this section is implemented in the general mathematical software system \texttt{Sagemath}. To illustrate the efficiency of the algorithm, let us say that the knowledge of the first $n$ terms of the continued fraction of $f_1/f_2$ determines the first $\simeq \exp(n)$ letters of the Sturmian word $w_0$. \end{remark}
	
	\begin{proof}[Proof of Theorem \ref{th:standard_sturmian}]
		
		Let $(s_n)_n\in\{\sigma_1,\sigma_2\}^\N$  be a sequence that contains each substitution $\sigma_1$ and $\sigma_2$ infinitely many times. For $n\in \N$, we denote $u_n:=s_1\circ\cdots\circ s_n(1)$ and $v_n:=s_1\circ\cdots\circ s_n(2)$. We recall that $|u|$ denotes the length of a finite word $u$. We are going to prove that:
		\begin{enumerate}
			\item[1)] ``the sequences of integers $(|u_n|)_n$ and $(|v_n|)_n$ are non-decreasing, and increase for infinitely many $n$.'' As a consequence, if $(u_n)_n$ and $(v_n)_n$ converge, they must converge to infinite words.
			\item[2)] ``the sequences $(u_n)_n$ and $(v_n)_n$ converge to  a common limit.'' This limit will be denoted by $w_0$.
			\item[3)] ``the infinite word $w_0$ admits exactly one right-special factor of length $n$, for every $n\in\N$.'' Therefore, its complexity is $n\mapsto n+1$, and $w_0$ is a Sturmian word.
			\item[4)] ``the sequence of successive renormalizations $(Q_n)_n\in\{R_1,R_2\}^\N$ of the Sturmian word $w_0$ is given by
			\[
			Q_n=R_1 \iff s_n=\sigma_1.''
			\]
			Therefore, the run lengths of $(Q_n)_n$ and $(s_n)_n$ coincide, and are equal (by Theorem~\ref{th:connection_CF}) to the continued fraction expansion of $f_1/f_2$, the ratio of the letter frequencies of $w_0$. At the end of this step, the theorem will be completely proven.
		\end{enumerate}

		1) Since the image of any letter by $\sigma_1$ and $\sigma_2$ contains at least one letter, we always have $|u_{n+1}|\geq |u_n|$. Furthermore, since the sequence $(s_n)_n$ contains infinitely many occurrences of $\sigma_1$ and $\sigma_2$, for infinitely many $n\in\N$, we have $u_{n+2}=\sigma_1\circ\sigma_2(u_n)$. It follows from $\sigma_1\circ\sigma_2(1)=121$ and $\sigma_1\circ\sigma_2(2)=12$ that $|u_{n+2}|>|u_n|$ for these integers $n$. Therefore, the sequence $(|u_n|)_n$ is non-decreasing, and increases infinitely many times. A similar argument holds for the sequence $(v_n)_n$.

		\bigskip
		2) Let $n\in\N$. Observe that
		
		- if $s_{n+1}=\sigma_1$, then
		\[
		\begin{array}{l}
			u_{n+1}=s_1\circ\cdots\circ s_{n+1}(1)=s_1\circ\cdots\circ s_n(1)=u_n,\\
			v_{n+1}=s_1\circ\cdots\circ s_{n+1}(2)=s_1\circ\cdots\circ s_n(12)=u_nv_n.
		\end{array}
		\]
		
		- similarly, if $s_{n+1}=\sigma_2$, then $u_{n+1}=v_n u_n$ and $v_{n+1}=v_n$.
		
		\noindent
		Thus, if for every $n\in\N$ we set
		\[
		p_{n+1}:=\begin{cases}
			u_n & \text{ if $s_{n+1}=\sigma_1$,} \\
			v_n & \text{ otherwise,}
		\end{cases}
		\]
		then $p_{n+1}$ is a common prefix of $u_{n+1}$ and $v_{n+1}$, and hence of $p_{n+2}$. As a consequence, since $|p_n|\to\infty$, the three sequences of finite words $(u_n)_n$, $(v_n)_n$ and $(p_n)_n$ converge to a common infinite word $w_0\in\{1,2\}^\N$.
		
		\bigskip
		3) We start with a preliminary observation: the image of a finite or infinite word by $\sigma_1$ does not contain the factor $22$, and symmetrically, the image of a finite or infinite word by $\sigma_2$ does not contain the factor $11$.
		
		\smallskip
		First, we prove that for every $n\in\N$, $w_0$ admits at most one right-special factor of length $n$. We proceed by contradiction. Let $u\neq v$ be two right-special factors of equal length in $w_0$. Since $u_n$ converges to $w_0$, there exists $n$ such that $u1$, $u2$, $v1$ and $v2$ appear in $u_n = s_1\circ\cdots\circ s_n(1)$. We are going to ``desubstitute'' $u1$, $u2$, $v1$ and $v2$ by $s_1$, and obtain two shorter, equally long right-special factors $u''$ and $v''$  of $s_2\circ\cdots\circ s_n(1)$.
		
		To proceed with rigor, we introduce the following definition:  $u'$ is a \emph{pre-image} of a finite word $u$ by a substitution $s$ if $u$ is a factor of $s(u')$, and $u'$ is minimal for this property (the order being: $u<v$ if and only if $u$ is a factor of $v$). For example, $212$ is the unique pre-image of $12112$ by $\sigma_1$, while $211$ and $212$ are two pre-images of $1211$.
		
		We resume the proof. Without loss of generality, assume that $s_1 = \sigma_1$. In this case, the last letter of the right-special factors $u$ and $v$ must be $1$ (otherwise, the word $22$ would appear in $w_0$). Then, when they are followed by the letter $2$ (as in $u2$ and $v2$), these last $1$ necessarily originate from a letter $2$ in $s_2\circ\cdots\circ s_n(1)$. By contrast, when they are followed by another $1$ (as in $u1$ and $v1$), these last $1$ must originate from a letter $1$ in $s_2\circ\cdots\circ s_n(1)$. Now, reading $u$ and $v$ backwards from their penultimate letters, we see that the rest of their pre-image by $\sigma_1$ is uniquely determined.  Let us call them $u'$ and $v'$. The words $u'$ and $v'$ are thus two distinct right-special factors of $s_2\circ\cdots\circ s_n(1)$. Furthermore, their lengths satisfy $|u'|, |v'| < |u| = |v|$, and they cannot be prefixes of each other (otherwise, $|u|\neq|v|$). Therefore, by cropping a few letters  at the beginning of one of them if necessary, we can ensure that $u'$ and $v'$ are two distinct, equally long right-special factors of  $s_2\circ\cdots\circ s_n(1)$, with $|u'|=|v'| < |u|=|v|$. Now, by iterating this ``desubstitution'' finitely many times, we will reach an integer $k$ and $u''\neq v''$ two distinct right-special factors of length $1$, in $s_k\circ\cdots\circ s_n(1)$. Such a situation is impossible:  our preliminary observation guarantees that $11$ and $22$ cannot simultaneously appear in $s_k\circ\cdots\circ s_n(1)$. A contradiction. 
		
		\smallskip
		We now want to prove that for every $n\in\N$, $w_0$ admits at least one right-special factor of length $n$. Since every suffix of a right-special factor is also a right-special factor,  it suffices to prove the property for infinitely many $n$. Observe that both factors $11$ and $12$ appear in all finite words of the form $\sigma_1\circ\sigma_2^k\circ\sigma_1(2)$, with $k\geq 1$.
		Since the patterns $\sigma_1\circ \sigma_2^k\circ\sigma_1$ have to appear infinitely many times in $(s_n)_n$, we deduce that for infinitely many $n$, the finite words $s_1\circ\cdots\circ s_n(11)$ and $s_1\circ\cdots\circ s_n(12)$ are factors of $w_0$. These two words are distinct. Therefore, their longest common prefix (which  starts with $u_n=s_1\circ\cdots\circ s_n(1)$) is a right-special factor of length larger than $|u_n|$. By the first step, we conclude that $w_0$ admits arbitrarily long right-special factors. This completes the proof of the third step: $w_0$ admits exactly one right-special factor for every length $n\in \N$. 
		
		\bigskip
		4) It suffices to notice that if $s_1=\sigma_1$, then $w_0 = \lim_{k\rightarrow \infty} s_1 \circ \cdots \circ s_k(1)$ is of type $1$, and $R(w_0)=R_1(w_0)$ is exactly the Sturmian word $w_1 := \lim_{k\rightarrow \infty} s_2 \circ ... \circ s_k(1)$. Symmetrically, if $s_1 = \sigma_2$, then $w_0$ is of type $2$ and $R(w_0)=R_2(w_0)=\lim_{k\rightarrow \infty} s_2 \circ ... \circ s_k(1)$. We conclude by induction.
	\end{proof}

	%
	%

	\subsection{Equivalent dynamical and geometrical descriptions of Sturmian words}


	\definecolor{rouge}{rgb}{1,0,0}
	\definecolor{gris}{rgb}{0.3764705882352941,0.3764705882352941,0.3764705882352941}
	\definecolor{vert}{rgb}{0,0.8,0}
	\definecolor{orange}{rgb}{1,0.6,0.2}
	\definecolor{bleu}{rgb}{0,0.2,0.8}
	\definecolor{gris2}{rgb}{0.6274509803921569,0.6274509803921569,0.6274509803921569}
	
	In this last subsection, we present (with no proof), alternative dynamical and geometrical constructions of Sturmian words. Specifically, we will see that
	Sturmian words encode the \emph{minimal} trajectories of
	
	- a billiard ball on a square table,
	
	- a point subjected to the linear flow on the two-dimensional torus,
	
	- a rotation on the circle,
	
	\noindent and that they ``digitize'' the half-lines of $\R^2$ with irrational slopes.
	
	\smallskip
	\noindent (We recall that a topological dynamical system is \emph{minimal} when every point has a dense trajectory. In the next chapter, we will discuss the importance of the minimal and irrational assumptions.)
	\bigskip
	
	We first define the dynamical and geometrical systems mentioned above, and explain how to encode them with infinite words.
	
	\paragraph{Billiard words.} Let $x \in [0,1)^2$ and $\theta \in (\R_{>0})^2$. The billiard word $w=w_{\mathrm{billiard}}(x,\theta)$ is the infinite binary word encoding the trajectory of a billiard ball, initially located at position $x$, and launched with a momentum $\theta$, on the square table $[0,1]^2$. Its $n$-th letter  is defined by:
	\[w[n] = \begin{cases} 1  \text{ if the $n$-th edge hit by the ball is vertical,}\\  \text{2 if the $n$-th edge hit by the ball is horizontal.} \end{cases}\]
	(Since it will not be useful in this lecture, we choose not to encode trajectories that hit corners.)
	Example: In Figure \ref{Fig:dynamicals_sturmian}, the billiard word starts with $w_{\mathrm{billiard}}(x,\theta)=12112$...
	
	\paragraph{Digitization of straight lines.} Again, let $x \in [0,1)^2$ and $\theta \in (\R_{>0})^2$. Denote by $\mathcal{G} = \Z\times\R \cup \R\times\Z$ the integer grid of $\R^2$. The \emph{cutting sequence}  $w=w_{line}(x,\theta)$ is the infinite binary word that digitizes the half-line $\mathcal{D}$ parameterized by $x+t\theta$, $t \in \R_{>0}$. Its $n$-th letter is defined by
	\[w[n] = \begin{cases} 1  \text{ if the $n$-th  intersection of the line $\mathcal{D}$ with the   grid $\mathcal{G}$ is on a vertical branch,}\\  \text{2 otherwise.} \end{cases}\]
	(We do not digitize the lines that pass through a grid intersection.)
	Example: The half-line of Figure 2\ref{Fig:dynamicals_sturmian} is digitized by an infinite word which starts with $w_{line}(x,\theta)=12112...$. Interpretation: if we interpret a letter $1$ as a unit step to the right, and the letter $2$ as a unit step upwards, the word $w$ is the ``staircase'' that best represents the line $\mathcal{D}$.
	
	\paragraph{Coding a linear flow on the torus. } Let $x \in [0,1)^2$ and $\theta \in (\R_{>0})^2$. The ``flow word"  $w=w_{flow}(x,\theta)$ is the infinite binary word that encodes the trajectory of the point $x$ under the action of the linear flow with direction $\theta$ on the torus $\R^2/\Z^2 = [0,1)^2$. If $P$ denotes the Poincar\'e section $P= \{0\}\times[0,1)\cup [0,1)\times\{0\}$, the coding is defined by
	\[ w[n] = \begin{cases} 1 \text{ if the $n$-th time $P$ is crossed, it is crossed via its vertical branch,} \\ 2 \text{ otherwise.} \end{cases} \]
	(We do not encode the trajectories passing through the corner point $(0,0)$ of the torus.)
	Example: The flow-trajectory of $x$ in Figure \ref{Fig:dynamicals_sturmian} is encoded by an infinite word $w_{flow}(x,\theta) = 12112...$

	\paragraph{Coding of rotations. } The rotation by angle $\alpha \in \R$ on the circle $\R/\Z = [0,1)$ is the map $R_{\alpha}: y \mapsto y + \alpha \mod 1.$
	The trajectory of a point $y \in [0,1)$ is encoded by the infinite binary word $w = w_{rotation}(y,\alpha)$ whose $n$-th letter is defined by 
	\[ w[n] = \begin{cases} 1 \text{ if } \; R_{\alpha}^{n-1} (y) \in [0,1-\alpha) \\ 2 \text{ otherwise.}\end{cases}\]
	Example: In Figure \ref{Fig:dynamicals_sturmian}, the trajectory of $y$ is encoded by the infinite word $w_{rotation}(y,\alpha)=12112...$.
	
	
	\definecolor{rouge}{rgb}{1,0,0}
	\definecolor{gris}{rgb}{0.3764705882352941,0.3764705882352941,0.3764705882352941}
	\definecolor{vert}{rgb}{0,0.8,0}
	\definecolor{orange}{RGB}{253,190,133} 
	\definecolor{orange2}{RGB}{253,174,107} 
	\definecolor{bleu}{RGB}{31,119,180} 
	\definecolor{gris2}{rgb}{0.6274509803921569,0.6274509803921569,0.6274509803921569}
	
	
	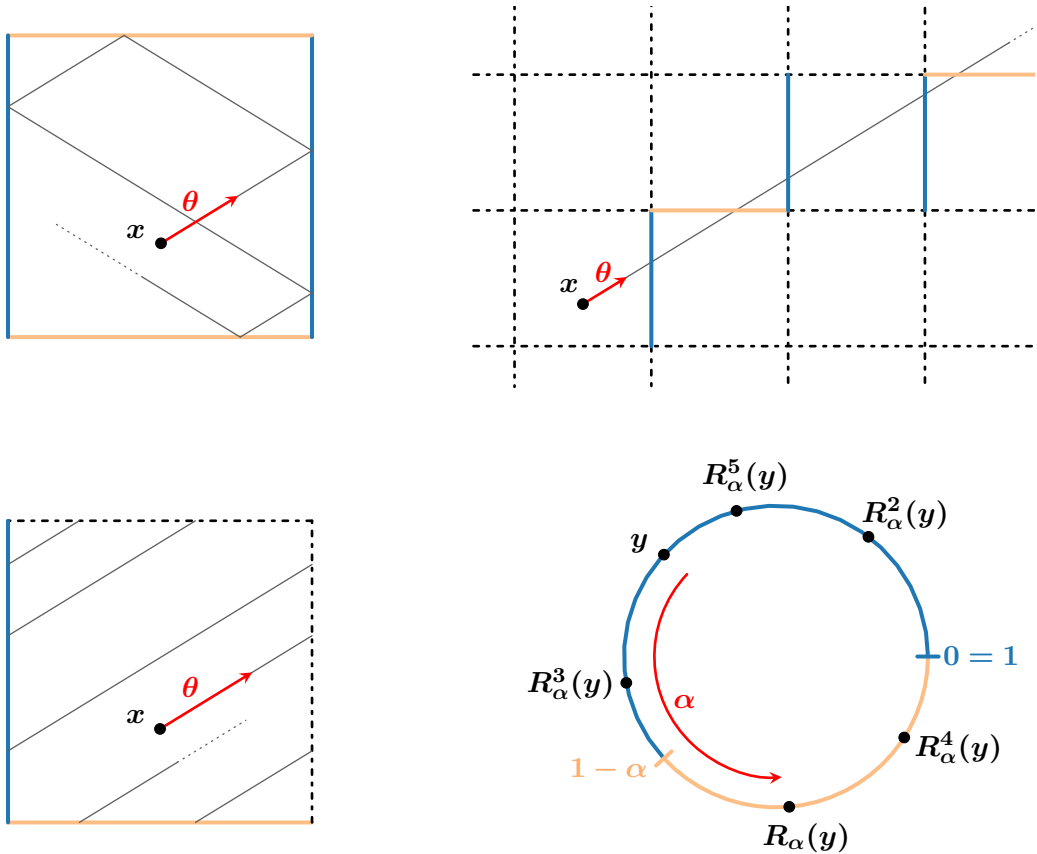
\begin{figure}
		\begin{center}
			\begin{tikzpicture}[line cap=round,line join=round,>=stealth,x=4cm,y=4cm]
				\clip(-.1,-.3) rectangle (1.3,1.1);
				\draw [line width=1.5pt, color=orange] (0,0)-- (1,0);
				\draw [line width=1.5pt, color=bleu] (1,0)-- (1,1);
				\draw [line width=1.5pt, color=orange] (1,1)-- (0,1);
				\draw [line width=1.5pt, color=bleu] (0,1)-- (0,0);
				
				\draw [line width=0.5pt,color=gris] (0.5032443779377416,0.311022130212822)-- (1,0.6180339887498948);
				\draw [line width=0.5pt,color=gris] (1,0.6180339887498948)-- (0.38196601125010504,1);
				\draw [line width=0.5pt,color=gris] (0.38196601125010504,1)-- (0,0.7639320225002106);
				\draw [line width=0.5pt,color=gris] (0,0.7639320225002106)-- (1,0.14589803375031593);
				\draw [line width=0.5pt,color=gris] (1,0.14589803375031593)-- (0.7639320225002091,0);
				\draw [line width=0.5pt,color=gris] (0.7639320225002091,0)-- (0.4433818569265232,0.19811089742394455);
				\draw [line width=0.5pt,dotted,color=gris] (0.4433818569265232,0.19811089742394455)-- (0.1474147733799961,0.3810286146068782);
				
				\draw [->,line width=1pt,color=rouge] (0.5032443779377416,0.311022130212822) -- (0.7572681306014053,0.46801744330876277);
				\draw [fill] (0.5032443779377416,0.311022130212822) circle (2pt);
				\draw (0.42,0.35) node {$\bm{x}$};
				\draw (0.6,0.45) node {\textcolor{red}{$\bm{\theta}$}};
			\end{tikzpicture}
			~
			\begin{tikzpicture}[line cap=round,line join=round,>=stealth,x=1.8cm,y=1.8cm]
				\clip(-0.6,-0.6) rectangle (3.8,2.5);
				
				\draw [line width=0.5pt, color=gris] (0.5,0.31)-- (3.6114185206287814,2.2329623989745024);
				\draw [line width=0.5pt,dotted, color=gris] (3.6114185206287814,2.2329623989745024)-- (3.810072968553901,2.3557375998085726);
				
				\draw [line width=1pt,dash pattern=on 2pt off 3pt on 1pt off 3pt] (0,3.7523525335395043)--(0,-0.3);
				\draw [line width=1pt,dash pattern=on 2pt off 3pt on 1pt off 3pt] (1,3.7754241683027647)--(1,1); \draw [line width=1pt,dash pattern=on 2pt off 3pt on 1pt off 3pt] (1,0)--(1,-0.3);
				\draw [line width=1pt,dash pattern=on 2pt off 3pt on 1pt off 3pt] (2,3.7408167161578736)--(2,2); \draw [line width=1pt,dash pattern=on 2pt off 3pt on 1pt off 3pt] (2,1)--(2,-0.3);
				\draw [line width=1pt,dash pattern=on 2pt off 3pt on 1pt off 3pt] (3,3.8965502508098835)--(3,2); \draw [line width=1pt,dash pattern=on 2pt off 3pt on 1pt off 3pt] (3,1)--(3,-0.3);
				\draw [line width=1pt,dash pattern=on 2pt off 3pt on 1pt off 3pt] (4,3.827335346520101)--(4,3); \draw [line width=1pt,dash pattern=on 2pt off 3pt on 1pt off 3pt] (4,1)--(4,-0.3);
				\draw [line width=1pt,dash pattern=on 2pt off 3pt on 1pt off 3pt] (-0.3,3)--(3,3); \draw [line width=1pt,dash pattern=on 2pt off 3pt on 1pt off 3pt] (4,3)--(4.70361611220184,3);
				\draw [line width=1pt,dash pattern=on 2pt off 3pt on 1pt off 3pt] (-0.3,2)--(3,2); \draw [line width=1pt,dash pattern=on 2pt off 3pt on 1pt off 3pt] (4,2)--(4.790134742564068,2);
				\draw [line width=1pt,dash pattern=on 2pt off 3pt on 1pt off 3pt] (-0.3,1)--(1,1); \draw [line width=1pt,dash pattern=on 2pt off 3pt on 1pt off 3pt] (2,1)--(4.795902651254883,1);
				\draw [line width=1pt,dash pattern=on 2pt off 3pt on 1pt off 3pt] (-0.3,0)--(4.790134742564068,0);
				
				\draw [line width=1.5pt, color=bleu] (1,1)-- (1,0);
				\draw [line width=1.5pt, color=orange] (1,1)-- (2,1);
				\draw [line width=1.5pt, color=bleu] (2,1)-- (2,2);
				\draw [line width=1.5pt, color=bleu] (3,2)-- (3,1);
				\draw [line width=1.5pt, color=orange] (3,2)-- (4,2);
				\draw [line width=1.5pt, color=bleu] (4,2)-- (4,1);
				
				\draw [->,line width=1pt,color=rouge] (0.5,0.31) -- (0.8216762615878631,0.5088068630353015);
				\draw [fill] (0.5,0.31) circle (2pt);
				\draw (0.4,0.45) node {$\bm{x}$};
				\draw (0.65,0.55) node {\textcolor{red}{$\bm{\theta}$}};
			\end{tikzpicture}
			\\
			\begin{tikzpicture}[line cap=round,line join=round,>=stealth,x=4cm,y=4cm]
				\clip(-0.1,-0.1) rectangle (1.48,1.3);
				
				\draw [line width=1.5pt, color=orange] (0,0)-- (1,0);
				\draw [line width=1pt, dash pattern=on 2pt off 3pt on 1pt off 3pt] (1,0)-- (1,1);
				\draw [line width=1pt, dash pattern=on 2pt off 3pt on 1pt off 3pt] (1,1)-- (0,1);
				\draw [line width=1.5pt, color=bleu] (0,1)-- (0,0);
				
				\draw [line width=0.5pt, color=gris] (0.5,0.31)-- (1,0.6190169943749475);
				\draw [line width=0.5pt, color=gris] (0,0.6190169943749475)-- (0.6164434522374274,1);
				\draw [line width=0.5pt, color=gris] (0.6164434522374274,0)-- (1,0.23705098312484235);
				\draw [line width=0.5pt, color=gris] (0,0.23705098312484235)-- (1,0.8550849718747372);
				\draw [line width=0.5pt, color=gris] (0,0.8550849718747372)-- (0.23447744098732223,1);
				\draw [line width=0.5pt, color=gris] (0.23447744098732223,0)-- (0.5571615021721624,0.1994297174400819);
				\draw [line width=0.5pt,dotted, color=gris] (0.5571615021721624,0.1994297174400819)-- (0.7824576346081474,0.33867038481941825);
				
				\draw [->,line width=1pt,color=rouge] (0.5,0.31) -- (0.8031851677125179,0.4973787385311733);
				\draw [fill] (0.5,0.31) circle (2pt);
				\draw (0.42,0.35) node {$\bm{x}$};
				\draw (0.6,0.45) node {\textcolor{red}{$\bm{\theta}$}};
			\end{tikzpicture}
			~
			\begin{tikzpicture}[line cap=round,line join=round,>=stealth, x=2cm,y=2cm]
				\clip(-1.9,-1.3) rectangle (1.7,1.35);
				\draw [shift={(0,0)},line width=1.5pt,color=bleu]  plot[domain=0:3.8832220774509327,variable=\t]({cos(\t r)},{sin(\t r)});
				\draw [shift={(0,0)},line width=1.5pt,color=orange]  plot[domain=-2.3999632297286535:0,variable=\t]({cos(\t r)},{sin(\t r)});
				\draw [line width=1.5pt, color=bleu] (0.93,0)-- (1.07,0);
				\draw[color=bleu] (1.35,0.01) node {$\bm{0=1}$};
				\draw [line width=1.5pt, color=orange] (-0.789,-0.7228)-- (-0.6858,-0.6282);
				\draw[color=orange2] (-1.1,-0.75) node {$\bm{1-\alpha}$};
				\draw [fill] (-0.7373688780783197,0.6754902942615238) circle (2pt);
				\draw (-0.9,0.75) node {$\bm{y}$};
				\draw [fill] (0.08742572471695986,-0.9961710408648278) circle (2pt);
				\draw (0.2,-1.2) node {$\bm{R_\alpha(y)}$};
				\draw [fill] (0.6084388609788624,0.7936007512916963) circle (2pt);
				\draw (0.85,0.95) node {$\bm{R^2_\alpha(y)}$};
				\draw [fill] (-0.9847134853154287,-0.1741819503793116) circle (2pt);
				\draw (-1.35,-0.18) node {$\bm{R^3_\alpha(y)}$};
				\draw [fill] (0.843755294812397,-0.5367280526263231) circle (2pt);
				\draw (1.2,-0.6) node {$\bm{R^4_\alpha(y)}$};
				\draw [fill] (-0.2596043049014895,0.9657150743757781) circle (2pt);
				\draw (-0.2,1.2) node {$\bm{R^5_\alpha(y)}$};
				\draw[rouge,->,line width=1pt,shorten >=1pt] (137:0.8) arc (137:274:0.8);
				\draw[color=rouge] (-0.6,-0.3) node {$\bm{\alpha}$};
			\end{tikzpicture}
		\end{center}
		\caption{\label{Fig:dynamicals_sturmian} Top-left: billiard trajectory. Top-right: digitization of a straight line. Bottom-left: linear flow on the torus. Bottom-right: rotation of the circle.}
	\end{figure}

	\bigskip
	The next theorem establishes the connection between these dynamical and geometrical systems, and Sturmian words.
	
	\begin{theorem}\label{th:sturmian_dynamics}
		Let $w$ be a standard Sturmian word with letter frequencies $(f_1,f_2)$. Then
		\[
		\begin{array}{rcl}
			w &=& w_{\mathrm{billiard}}(0,(f_1,f_2))\\
			&=& w_{\mathrm{line}}(0,(f_1,f_2))\\
			&=& w_{\mathrm{flow}}(0,(f_1,f_2))\\
			&=& w_{\mathrm{rotation}}(f_2,f_2).
		\end{array}
		\]
	\end{theorem}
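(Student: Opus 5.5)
I would prove the four equalities in two stages: first show the four codings agree with one another by elementary geometry, and then identify one of them (the cutting sequence) with the standard Sturmian word $w$ via the renormalization of Theorem~\ref{th:connection_CF} and the uniqueness statement of Theorem~\ref{th:standard_sturmian}.

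\emph{Stage 1: the codings coincide.} Unfolding the square table by reflections across its edges turns a billiard trajectory into a straight line in $\R^2$. A hit on a vertical (resp. horizontal) edge becomes a crossing of a vertical (resp. horizontal) line of $\G$, so $w_{\mathrm{billiard}}=w_{\mathrm{line}}$. Projecting $\R^2\to\R^2/\Z^2$ sends $\G$ onto the Poincar\'e section $P$, with vertical branches going to the vertical branch, so $w_{\mathrm{line}}=w_{\mathrm{flow}}$.

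For the rotation, I would normalize $\theta=(f_1,f_2)$ with $f_1+f_2=1$ and parametrize the line by the linear form $t=x_1+x_2$. Consecutive grid crossings are then the points where $x_1\in\Z$ or $x_2\in\Z$. Taking $y=\{x_2\}$ evaluated at the successive integer values of $t$, each unit step of $t$ increases $x_2$ by exactly $f_2$. During that step exactly one grid line is crossed: a horizontal one if $x_2$ wraps past an integer, that is, if $y\in[1-f_2,1)$, and a vertical one otherwise. Hence the letter is $2$ exactly when $y\in[1-\alpha,1)$ with $\alpha=f_2$, which matches $w_{\mathrm{rotation}}$. I still need to check the starting point. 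Starting at $x=0$ is a corner, so I would read the coding as starting just after $0$; the first letter then corresponds to $y=f_2$, which gives $w_{\mathrm{rotation}}(f_2,f_2)$.

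\emph{Stage 2: the cutting sequence is the standard word.} Let $v:=w_{\mathrm{line}}(0,(f_1,f_2))$, and suppose $f_1>f_2$. Apply the shear $(a,b)\mapsto(a-b,b)$, which preserves $\Z^2$. Geometrically, between two consecutive horizontal crossings the line with slope $<1$ crosses at least one vertical line, and erasing the vertical crossing immediately before each horizontal one is exactly the operation $R_1$. So $R_1(v)$ is the cutting sequence of the line with direction $(f_1-f_2,f_2)$, again through the origin. The symmetric statement holds with $R_2$ when $f_2>f_1$. Iterating, the sequence of operators applied to $v$ follows the \texttt{CF} algorithm on $f_1/f_2$, exactly as in the proof of Theorem~\ref{th:connection_CF}(ii).

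Equivalently, $v$ is a limit of the form $s_1\circ\cdots\circ s_n(\cdot)$, whose directive sequence has run-length equal to the continued fraction of $f_1/f_2$. The letter frequencies of $v$ are $(f_1,f_2)$, since the line crosses about $f_1T$ vertical and $f_2T$ horizontal lines up to time $T$. By Theorem~\ref{th:standard_sturmian} and the uniqueness corollary, $v$ is then the standard Sturmian word with ratio $f_1/f_2$, namely $w$.

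The main obstacle is the precise form of this commutation. One must show that $v=\sigma_i(v')$ exactly, with no discrepancy at the start of the kind warned about in Exercise~\ref{exo:resubstitute}. This is where starting at the origin matters. I would handle it by perturbing the base point to $\epsilon(1,1)$ with $\epsilon\to0^+$, and checking that the sheared line is still in the same position (just above a lattice point). Then $v=\sigma_i(v')$ holds letter by letter, and the prefixes $s_1\circ\cdots\circ s_n(1)$ converge to $v$.
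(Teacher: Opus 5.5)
Your proposal is correct. For the key step, identifying the cutting sequence with the standard word, it takes a genuinely different route from the paper.

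\textbf{Geometric equalities.} Billiard $=$ line $=$ flow is done as in the paper: unfolding, then quotient by $\Z^2$. For flow $=$ rotation, the paper projects the flow onto a line orthogonal to $\theta$, obtains an exchange of two intervals, and rescales. Your section $\{x_1+x_2\in\Z\}$ instead gives the rotation by $f_2$, the partition $[0,1-f_2)\sqcup[1-f_2,1)$ and the initial point $f_2$ in a single computation.

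\textbf{Identification with the standard word.} The paper delegates this to Exercise~\ref{exo:sturmian_rotation} and argues combinatorially on the rotation side:
\begin{itemize}
\item rotation words are Sturmian with frequencies $(1-\alpha,\alpha)$;
\item a Sturmian word is standard if and only if all its prefixes are left-special;
\item the prefixes of $w_{\mathrm{rotation}}(\alpha,\alpha)$ are left-special;
\item uniqueness then concludes.
\end{itemize}
You renormalize on the line side instead: the elementary shears intertwine cutting sequences with $\sigma_1,\sigma_2$ along the subtractive Euclidean algorithm on $(f_1,f_2)$.

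\textbf{What each route buys.} Your route exhibits the directive sequence directly and explains geometrically where the continued fraction comes from. It gets Sturmianity for free from Theorem~\ref{th:standard_sturmian}, so it needs neither the complexity of rotation words nor the left-special characterization of standard words. The paper's route yields, as a by-product, that every rotation word is Sturmian, which is used in Remark~\ref{rk:sturmian_neq_rotation}.

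\textbf{What you still need to write out.} The cost of your route is the exact identity $v=\sigma_i(v')$, and you should state its mechanism rather than paraphrase its conclusion.
\begin{itemize}
\item The shear $(a,b)\mapsto(a-b,b)$ does not preserve $\G$: it sends the vertical lines to the anti-diagonals $a+b\in\Z$. Since $a+b$ increases along a line of positive direction, exactly one anti-diagonal crossing lies between two consecutive $\G$-crossings. Reading the sheared line against $\G$ versus against its image grid therefore gives $v=[1]\,\tau(v')$, where $\tau:1\mapsto 1,\ 2\mapsto 21$. The leading $1$ occurs if and only if the sheared starting point lies below the anti-diagonal of its square, and $1\tau(v')=\sigma_1(v')$.
\item Your perturbed point $\eps(1,1)$ shears to $(0,\eps)$. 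This point lies on a vertical grid line, not in ``the same position'', but it is still below $a+b=1$ and close to the origin. Hence the leading letter is present, and in the limit $v=\sigma_1(v')$, with $v'$ the cutting sequence of the origin line of direction $(f_1-f_2,f_2)$. This is precisely why the base point $0$ produces the standard word rather than one of its shifts.
\end{itemize}
Iterating gives $v=s_1\circ\cdots\circ s_n(v^{(n)})$ for every $n$. So $v$ shares arbitrarily long prefixes with the limit in Theorem~\ref{th:standard_sturmian}, and the uniqueness corollary concludes.
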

	
	This theorem provides an alternative proof of the existence of Sturmian words. It also helps develop intuition about their combinatorial properties, and sometimes allows us to establish them, as illustrated in the following exercise.

	\begin{exercise} Let $w$ be a Sturmian word.\\
		1) Prove that if $u$ if a factor of $w$, then $\texttt{mirror}(u)$ (the word $u$ written backwards) is also a factor of $w$.\\
		\noindent 2) [Challenge] Prove the same result using only combinatorial arguments. 
	\end{exercise}

	We conclude this chapter with two remarks and the proof of Theorem \ref{th:sturmian_dynamics}.
	
	\begin{remark}
		\label{rk:minimal_equiv_irratinality} We recall from Theorem \ref{th:connection_CF} that the letter frequencies of a Sturmian word satisfy $f_1/f_2 \notin\Q$. This condition is necessary and sufficient to ensure the minimality of the square billiard, and the minimality of linear flow on the torus. It is also equivalent to asking $f_2 \notin \Q$ (due to the equalities $f_1/f_2 = (1-f_2)/f_2) = 1/f_2 - 1$), and thus, equivalent to asking the minimality of the circle rotation. 
	\end{remark}

	\begin{remark}\label{rk:sturmian_neq_rotation}We almost have the set equality
		\[\{\text{Sturmian words}\} = \{w_{\mathrm{rotation}}(y,\alpha) \; \vert \;  y\in [0,1),\alpha \in [0,1)\backslash \Q\}. \]
		Indeed, the reverse inclusion is true, while the direct inclusion is almost true: for every possible pair of letter frequencies $(f_1,f_2)$, only countably many Sturmian words are missing (see Exercise \ref{exo:all_rotations_are_sturmian} at the end of the chapter).
	\end{remark}

	\begin{proof}[Proof of Theorem \ref{th:sturmian_dynamics}] We justify the second, third and fourth equalities. The first equality will be established by Exercise \ref{exo:sturmian_rotation} below.
		
		Clearly, we will always have $w_{\mathrm{billiard}}(x,\theta)=w_{\mathrm{line}}(x,\theta)$. Indeed, instead of letting it bounce, one can equivalently mirror the billiard table and let the ball continue its trajectory along the same straight line. (The idea of mirroring a billiard table is classical in the theory of \emph{translation surfaces}.) It is also clear that for every pair of parameters $(x, \theta)$, we have $w_{\mathrm{flow}}(x,\theta)=w_{\mathrm{line}}(x,\theta)$: it suffices to take the quotient of the plane, the integer grid, and the half-line by the lattice $\Z^2$.
		
		To establish the fourth equality, we project the linear flow with direction $\theta = (\theta_1,\theta_2)\in \R_{>0}^2$ on the torus $\R^2/\Z^2 = [0,1)^2$ onto a line orthogonal to flow direction, and prove that the resulting one-dimensional dynamical system is an exchange of two intervals or, equivalently, a rotation of the circle. An illustration is provided in Figure \ref{fig:projection}. 
		
		
		\begin{figure}[h!]
		\begin{center}
			\begin{tikzpicture}[line cap=round,line join=round,>=stealth,x=4cm,y=4cm]
				\clip(-0.64,-0.56) rectangle (1.1,1.1);
				\draw [line width=1pt] (0,0)-- (1,0);
				\draw [line width=1.5pt,color=bleu, dash pattern=on 3pt off 3pt on 2pt off 3pt] (1,0)-- (1,1);
				\draw [line width=1.5pt,color=orange2, dash pattern=on 3pt off 3pt on 2pt off 3pt] (1,1)-- (0,1);
				\draw [line width=1pt] (0,1)-- (0,0);
				
				\draw [line width=1.5pt,color=orange2] (-0.5982492470923848,0.6302616315528721)-- (-0.32185604484236374,0.18304803605291403);
				\draw [line width=1.5pt,color=bleu] (-0.32185604484236374,0.18304803605291403)-- (0.12535755065759427,-0.540558761697065);
				
				\draw [line width=0.5pt, color=gris] (0.5,0.31)-- (1,0.6190169943749475);
				\draw [line width=0.5pt, color=gris] (0,0.6190169943749475)-- (0.6164434522374274,1);
				\draw [line width=0.5pt, color=gris] (0.6164434522374274,0)-- (1,0.23705098312484235);
				\draw [line width=0.5pt, color=gris] (0,0.23705098312484235)-- (1,0.8550849718747372);
				\draw [line width=0.5pt, color=gris] (0,0.8550849718747372)-- (0.23447744098732223,1);
				\draw [line width=0.5pt, color=gris] (0.23447744098732223,0)-- (0.5571615021721624,0.1994297174400819);
				\draw [line width=0.5pt,dotted,color=gris] (0.5571615021721624,0.1994297174400819)-- (0.7824576346081474,0.33867038481941825);
				
				\draw [line width=0.5pt,dash pattern=on 2pt off 3pt on 1pt off 3pt,color=gris2] (-0.5982492470923848+5*0.03031,0.6302616315528721+5*0.01874)-- (0,1);
				\draw [line width=0.5pt,dash pattern=on 2pt off 3pt on 1pt off 3pt,color=gris2] (0.12535755065759427+5*0.03031,-0.540558761697065+5*0.01874)-- (1,0);
				\draw [line width=0.5pt,dash pattern=on 2pt off 3pt on 1pt off 3pt,color=gris2] (-0.32185604484236374+5*0.03031,0.18304803605291403+5*0.01874)-- (1,1);
				
				\draw [line width=0.5pt,dash pattern=on 2pt off 4pt,color=gris] (-0.5334412763225085,0.5254001321053025)-- (0,0.8550849718747372);
				\draw [line width=0.5pt,dash pattern=on 2pt off 4pt,color=gris] (-0.4278684673224243,0.3545797388553656)-- (0,0.6190169943749475);
				\draw [line width=0.5pt,dash pattern=on 2pt off 4pt,color=gris] (-0.2570480740724874,0.07818653660534453)-- (0,0.23705098312484235);
				\draw [line width=0.5pt,dash pattern=on 2pt off 4pt,color=gris] (-0.08622768082255047,-0.1982066656446765)-- (0.23447744098732226,0);
				\draw [line width=0.5pt,dash pattern=on 2pt off 4pt,color=gris] (0.01934512817753364,-0.3690270588946134)-- (0.6164434522374274,0);
				\draw [line width=0.5pt,dash pattern=on 2pt off 4pt,color=gris] (-0.1514752650724033,-0.09263385664459245)-- (0.5,0.31);

				\draw [line width=0.5pt,dash pattern=on 2pt off 4pt,color=gris2] (-0.27719683582623883,-0.17033406048948754)-- (-0.1514752650724033,-0.09263385664459245);
				\draw [line width=0.5pt,dash pattern=on 2pt off 4pt,color=gris2] (0.01934512817753364,-0.3690270588946134)-- (-0.19666917952460064,-0.5025312431108105);
				\draw [line width=0.5pt,dash pattern=on 2pt off 4pt,color=gris2] (-0.6438827750245585,0.2210755546391683)-- (-0.4278684673224243,0.3545797388553656);
				
				\draw [->,line width=1pt,color=red] (0.5,0.31) -- (0.8031851677125179,0.4973787385311733);
				\draw [->,line width=1.5pt,color=bleu] (-0.6099178269930644,0.24206704694875553) -- (-0.16270423149310653,-0.4815397508012234);
				\draw [->,line width=1.5pt,color=orange2] (-0.2504761444219233-0.03031,-0.1538197649987234-0.01874) -- (-0.5268693466719443-0.03031,0.2933938305012346-0.01874);
				
				\draw [fill] (0.5,0.31) circle (2pt);
				\draw (-0.6,0.5) node {$\bm{z_4}$};
				\draw [fill] (-0.5334412763225085,0.5254001321053025) circle (2pt);
				\draw (-0.5,0.37) node {$\bm{z_1}$};
				\draw [fill] (-0.4278684673224243,0.3545797388553656) circle (2pt);
				\draw (-0.33,0.06) node {$\bm{z_3}$};
				\draw [fill] (-0.2570480740724874,0.07818653660534453) circle (2pt);
				\draw (-0.23,-0.08) node {$\bm{z_0}$};
				\draw [fill] (-0.08622768082255047,-0.1982066656446765) circle (2pt);
				\draw (-0.16,-0.2) node {$\bm{z_5}$};
				\draw [fill] (0.01934512817753364,-0.3690270588946134) circle (2pt);
				\draw (-0.06,-0.36) node {$\bm{z_2}$};
				\draw [fill] (-0.1514752650724033,-0.09263385664459245) circle (2pt);
				
				\draw [line width=1pt] (-0.5982492470923848-0.03031,0.6302616315528721-0.01874)-- (-0.5982492470923848+0.03031,0.6302616315528721+0.01874);
				
				\draw [line width=1pt] (0.12535755065759427-0.03031,-0.540558761697065-0.01874)--(0.12535755065759427+0.03031,-0.540558761697065+0.01874);
				
				\draw [line width=1pt] (-0.32185604484236374-0.03031,0.18304803605291403-0.01874)--(-0.32185604484236374+0.03031,0.18304803605291403+0.01874);
				
				\draw (0.2,-0.48) node {\small{$\bm{0}$}};
				\draw (-0.16,0.23) node {\small{$\bm{\ell-\beta}$}};
				\draw (-0.52,0.69) node {\small{$\bm{\ell}$}};
				
				\draw (0.42,0.35) node {$\bm{x}$};
				\draw (0.6,0.45) node {\textcolor{red}{$\bm{\theta}$}};
			\end{tikzpicture}
		\end{center}
		\caption{From the linear flow on the 2D torus to a rotation on a circle.}\label{fig:projection}
		\end{figure}
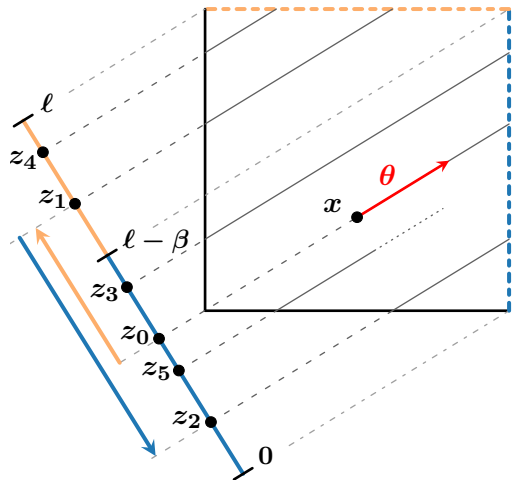

		Observe that the image, after the projection, of \\
		- the unit square $[0,1)^2$ is an oblique segment (its length will be denoted by $l$);\\
		- the top edge of the square is the orange/light gray upper interval of the segment (its length will be denoted by $\beta$);\\
		- the right edge of the square is the blue/dark gray lower interval of the segment (its length is $l-\beta$);\\
		- the trajectory of a point $x\in [0,1)^2$ under the action of the flow on the torus is a sequence of points on the segment, denoted by $(z_n)_{n\in \N_{>0}}$.
		
		\medskip
		We identify the oblique segment with $[0,l)$, with $0$ at the bottom endpoint. We assert that: \\
		1) The sequence $(z_n)_{n}$ is the trajectory of $z_1$ for the rotation by angle $\beta$ on the length-$l$ circle $\R/ l\Z = [0,l)$. \\
		2) The flow word $w_{\mathrm{flow}}(0,\theta)$ is equal to the rotation word $w_{\mathrm{rotation}}(y, \alpha)$, with $y = \alpha= \frac{\theta_2}{\theta_1+\theta_2}$.
		
		\medskip
		
		The first claim follows from the observation that the next section crossed by a point $x$ in the torus is vertical if, and only if, its projection $z=:z_k$ belongs to the blue/dark gray interval of the segment (that is, $z_k \in [0,1-\beta)$). In this case, $z_{k+1} = z_{k} + \beta$. In the other case (that is, when the next crossed section is horizontal, or equivalently, when $z_k \in [1-\beta,1)$, we have $z_{k+1} = z_k + \beta - l$. 
		In both cases, $z_{k+1} = z_k + \beta \mod l$, and the first claim is proven.
		
		Next, we rescale the segment by a factor $1/l$, in order to obtain a rotation on the unit circle. Denote $\alpha = \beta/l$ and $y_k = z_k/l$. Then we have that for all $k \in \N_{>0}$
		\[w_{\mathrm{flow}}(x,\theta)[k]= \begin{cases} 1 \text{ if } z_k \in [0,1-\alpha)\\
			2 \text{ otherwise}, \end{cases}\]
		thus establishing that $w_{\mathrm{flow}}(x,\theta) = w_{\mathrm{rotation}}(z_0,\alpha)$.
		We conclude the proof by calculating that
		\[\alpha =  \frac{\beta}{l}  = \frac{\theta_2}{\theta_1+\theta_2},\]
		and that the projection of the point $(0,0)$ of $[0,1)^2$ is $\alpha$. \end{proof}
	
	\begin{exercise} \label{exo:sturmian_rotation} Let $\alpha \in [0,1]\setminus\Q$. The aim of the exercise is to establish that $w_{\mathrm{rotation}}(\alpha,\alpha)$ is the standard Sturmian word with letter frequencies $(f_1,f_2)=(1-\alpha, \alpha)$. This equality completes the proof of Theorem \ref{th:sturmian_dynamics}.\\
		1) Let $y \in [0,1)$. Justify that the letter frequencies of the rotation word $w_{\mathrm{rotation}}(y,\alpha)$ exist and are equal to $(1-\alpha,\alpha)$. \\
		2) Let $y\in[0,1)$. Prove that $w_{\mathrm{rotation}}(y,\alpha)$ is a Sturmian word.\\
		3) Prove that a Sturmian word is standard if and only if all its finite prefixes are left-special. \\
		4) Prove that all the finite prefixes of the rotation word $w_0 = w_{\mathrm{rotation}}(\alpha,\alpha)$ are left-special.\\
		5) Conclude.
	\end{exercise}







	
	\begin{exercise}\label{exo:all_rotations_are_sturmian}
		The aim of the exercise is to prove Remark \ref{rk:sturmian_neq_rotation}. 
		Let $\alpha\in\R\setminus\Q$.\\
		1) Let $x,y\in[0,1)$. Prove that if $x\neq y$, then $w_{\mathrm{rotation}}(x,\alpha)\neq w_{\mathrm{rotation}}(y,\alpha)$.\\
		2) Denote by $w_0$ the standard Sturmian word with letter frequencies $(1-\alpha,\alpha)$. Prove that $w=2\cdot w_0$ is a Sturmian word, and yet, is not a rotation word according to our definition. \emph{Hint:} we proved in Exercise \ref{exo:sturmian_rotation} that $w_0 = w_{\mathrm{rotation}}(\alpha,\alpha)$.   
	\end{exercise}

	
	
	
	

	\section{Commented bibliography}
	
	The reader interested in learning more about Sturmian words is invited to consult:
	\begin{itemize}
		\item the historical papers by M. Morse and G. Hedlund from the 1940s \cite{MH38,MH40},
		\item the historical papers by C. Coven and G. Hedlund from the 1970s \cite{CH73a,CH73b},
		\item the two dedicated book chapters from  the 2000s: \cite[Chapter 2]{Loth02} and \cite[Chapter 6]{Pyth02}.
	\end{itemize}

	The introduction to Sturmian words that we gave in this lecture differs from the presentations in the two book chapters mentioned above. Indeed, we established their connection to continued fractions and to geometrical and dynamical systems \emph{directly} from their definition as words with complexity  $n\mapsto n+1$. We did not use abelian combinatorics on words (in particular, we did not use \emph{balancedness}).

\cleardoublepage
	\mychapter{Towards a generalization of the Morse--Hedlund theorem to $d$-ary alphabets: a theorem by Tijdeman (1999)}{Towards a generalization of the Morse--Hedlund theorem to $d$-ary alphabets: a theorem by Tijdeman (1999)}{Towards a generalization of the Morse--Hedlund theorem to $d$-ary alphabets }\label{sec:Generalization_MH38}
	
	\section{Motivations}\label{sect:motivations_d-ary}
	The first chapter of this course was devoted to the study of Sturmian words, which form a remarkable class of  binary infinite words. As a brief reminder:
\begin{itemize}
\item The \emph{complexity} of an infinite word $w \in \mathcal{A}^{\N}$ is the function $p_w:\N \rightarrow \N$ that counts, for each integer $n$, the number of factors (i.e., subwords) of length $n$ in $w$. 
\item A celebrated theorem by M. Morse and G. Hedlund from 1938 asserts that an infinite word is eventually periodic if and only if there exists an integer $n$ for which $p_w(n)\leq n$. Therefore, every non eventually-periodic word must satisfy $p_w(n)\geq n+1$ for all $n$.
\item There exist infinite words whose complexity is exactly $p_w(n)= n+1$ for all $n$; they are called \emph{Sturmian words}. 
\item Sturmian words are written over the binary alphabet (indeed, they satisfy $p_w(1)=2$). They exhibit remarkable interactions with arithmetic, dynamics, and geometry. More precisely, they are intimately connected to the \emph{continued fraction},  and can equivalently be defined as the symbolic codings of irrational rotations, irrational trajectories on a square billiard table, minimal exchanges of two intervals, and digitizations of straight lines with an irrational slope.
\end{itemize}

Therefore, it is natural to wonder to what extent these satisfying results can be extended to larger alphabets.
	
	\begin{question}\label{question:gene_MH} Let $d\geq 3$. Does there exist a $d$-ary analogue of the Morse--Hedlund theorem, that is, a theorem guaranteeing that, below a given complexity bound, infinite $d$-ary words are ``trivial''?\end{question}
	\begin{question}\label{question:understand_d_ary_minimal}If so, can we dynamically and arithmetically characterize, when $d\geq 3$,  the set of non-trivial $d$-ary infinite words with minimal complexity? \end{question}
	
	The aim of this second chapter is to convince the reader that the Theorem of Tijdeman provides a very satisfying answer to Question \ref{question:gene_MH}, and that Question \ref{question:understand_d_ary_minimal}, which is essentially open at the time of writing, deserves further investigation. 


	\section{Examples of $d$-ary generalizations of Sturmian words}
	
	Sturmian words can be generalized to $d$-ary alphabets in multiple ways:
	\begin{enumerate}
		\item[1)] From a combinatorial point of view, by imposing conditions on the set of special factors, or as limits of compositions by specific substitutions over the $d$-letter alphabet; 
		\item[2)] From an arithmetic point of view, as words generated by $d-1$-dimensional continued fraction algorithms (note that there exist several equally legitimate choices for the continued fraction, which give rise to several classes of $d$-ary words);
		\item[3)] From a geometric or dynamical point of view, as the coding of billiard trajectories on a polygonal or hypercubic table, or as the coding of a rotation of the torus $\mathbb{R}^{d-1}/\mathbb{Z}^{d-1}$, or as the coding of an exchange of $d$ intervals, \emph{etc}.
	\end{enumerate}
	
	These constructions give rise to various families of well-studied $d$-ary infinite words, such as  \emph{Arnoux--Rauzy words} \cite{AR91}, \emph{episturmian words} \cite{DJP01,JP02,GJ09}, \emph {words associated with the Cassaigne--Selmer continued fraction algorithm} \cite{CLL22}, \emph{dendric words} \cite{BDDLP15}, \emph{S-adic words} \cite[Chapter 12]{Pyth02},\cite{BD14}, \emph{natural codings of interval exchange transformations} \cite{Kea75,Yoc05,FZ08,Del16}, \emph{hypercubic billiard words}\cite{AMST94,Bar95,Bed09}, etc.
	They are all considered as generalizations of Sturmian words.
	
\medskip
	Interestingly, these $d$-ary generalizations of Sturmian words do not all have the same complexity function. For example, on the ternary alphabet, the complexity of Arnoux--Rauzy and Cassaigne--Selmer words is $n\mapsto 2n+1$, the complexity of non Arnoux--Rauzy episturmian words  is strictly lower than $n\mapsto 2n+1$, and the complexity of almost all cubic billiard words is $n\mapsto n^2+n+1$.


	\section[Discussion on what ``minimal complexity'' could mean for $d$-ary words]{What is the minimal complexity for a $d$-ary word? (Discussion on what ``minimal'' should mean.)}
	
	From now on, and throughout Chapters 2 and 3, when we say that an infinite word $w$ is a ``$d$-ary infinite word'', we mean that  $d$ distinct letters actually appear in $w$. Specifically, we require that $p_w(1)=d$. 
	
	\bigskip

	The next theorem asserts that the constant function $p:n\mapsto d$ is the absolute minimal complexity for a $d$-ary infinite word.
	
	\begin{theorem}\label{th:absolute_minimum}
		Let $d\geq 1$. Let $w\in\{1,\ldots,d\}^\N$ be an infinite word with $p_w(1)=d$. Then $p_w(n)\geq d$ for every $n\in\N_{>0}$. Moreover, there exists a $d$-ary infinite word $w$ such that $p_w(n)=d$ for all $n\in\N_{>0}$.
	\end{theorem}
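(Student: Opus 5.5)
The lower bound follows from the monotonicity of the complexity function, which was already used in the proof of Theorem~\ref{MH38}: every factor of length $n$ of $w$ extends to the right to at least one factor of length $n+1$. The map sending a factor of length $n+1$ to its prefix of length $n$ is therefore surjective from $\lan_{n+1}(w)$ onto $\lan_n(w)$, so $p_w(n+1)\geq p_w(n)$ for every $n$. By induction from $p_w(1)=d$, we get $p_w(n)\geq d$ for all $n\geq 1$.

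A cleaner alternative avoids induction altogether. Each of the $d$ letters $a$ occurs in $w$ at some position $i_a$. The factors of length $n$ starting at positions $i_a$ have pairwise distinct first letters, so they are $d$ pairwise distinct elements of $\lan_n(w)$. I would present this version, since it is one line.

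For existence, I take $w = 1\,2\,3\cdots (d-1)\, d^\omega$, which uses exactly $d$ letters, so $p_w(1)=d$. I then count its factors of length $n\geq 1$. A factor of length $n$ starting at position $k$ (indexing positions from $1$) is determined by $k$ when $k\leq d$. For $k\geq d$, the factor is $d^n$. This gives at most $d$ distinct factors, so $p_w(n)\leq d$, and combined with the first part, $p_w(n)=d$. This matches the bound in the proof of Theorem~\ref{MH38} (i)$\Rightarrow$(ii), namely preperiod length $d-1$ plus period length $1$.

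Neither step is a real obstacle. The only point needing care is the convention $p_w(0)=1$, which is why the statement restricts to $n\in\N_{>0}$. For $d=1$ the construction reduces to $1^\omega$, which is consistent. One might also remark that the purely periodic word $(12\cdots d)^\omega$ works equally well, since each length-$n$ factor is determined by its first letter.
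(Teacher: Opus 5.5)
Your proof is correct and follows the paper's: the lower bound comes from $p_w(1)=d$ together with the fact that $p_w$ is non-decreasing (your first-letter argument is a direct form of the same observation), and for existence the paper uses exactly the periodic word $(12\cdots d)^\omega$ that you mention at the end. Your main witness $12\cdots(d-1)\,d^\omega$ also works, since its factors of length $n$ are the $d-1$ words starting at positions $1,\ldots,d-1$ together with $d^n$.
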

	
	\begin{proof}
		The first assertion follows from the equality $p_w(1)=d$ and the fact that complexity $p_w$ is a non-decreasing function. For the second assertion, it suffices to check that the periodic word $w=(12\ldots d)^\omega$ is a $d$-ary infinite word with the desired complexity.
	\end{proof}

	However, Theorem \ref{th:absolute_minimum} is of limited interest: by taking $d=2$, we do not retrieve the Morse--Hedlund theorem  that we wish to extend: ``the minimal complexity function for a non eventually-periodic binary word is $n\mapsto n+1$''. 
	In our research for a generalization of this theorem, we need at least to keep restricting ourselves to non eventually-periodic $d$-ary words. As we will see in the following paragraphs, this restriction is not even sufficient.

	\medskip
	The theorem below asserts that the linear function $n\mapsto n+d-1$ is the minimal complexity for a non eventually-periodic $d$-ary word. 
	
	\begin{theorem} Let $d
		\geq 2$. Let $w\in\{1,\ldots,d\}^\N$ be a non eventually-periodic word with $p_w(1)=d$. Then $p_w(n)\geq n+d-1$ for every $n\in\N_{>0}$. Moreover, there exists a non eventually-periodic $d$-ary word $w$ such that $p_w(n)=n -d-1$ for all $n\in\N_{>0}$. 
	\end{theorem}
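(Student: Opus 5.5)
The plan has two parts: the lower bound, and an explicit word attaining it. Note that the statement contains an evident typo: the attained complexity should read $n+d-1$, not $n-d-1$ (which would be negative for small $n$). I will prove it in that form.

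\emph{Lower bound.} Let $w$ be non eventually-periodic with $p_w(1)=d$. By Remark~\ref{rk:MH38}, a word is eventually periodic if and only if its complexity is locally constant somewhere. Since $w$ is not eventually periodic, $p_w(m+1)\neq p_w(m)$ for every $m$. We also know $p_w$ is non-decreasing (every factor extends to the right), so $p_w(m+1)\geq p_w(m)+1$ for all $m\in\N$. Starting from $p_w(1)=d$ and inducting on $n\geq 1$ gives $p_w(n)\geq d+(n-1)=n+d-1$.

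\emph{Construction.} I would take a Sturmian word $s\in\{1,2\}^\N$, for instance the Fibonacci word, and prepend a prefix using the remaining letters, setting $w=3\,4\cdots d\cdot s$. Since $s$ is not eventually periodic, neither is $w$, and $p_w(1)=d$. For the upper bound, count the factors of length $n\geq1$. Each factor of $w$ either occurs entirely inside $s$, giving at most $p_s(n)=n+1$ factors, or starts at one of the $d-2$ positions of the prefix $3\cdots d$, giving at most $d-2$ further factors. Hence $p_w(n)\leq n+1+d-2=n+d-1$, and combined with the lower bound this is an equality.

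The only step needing care is justifying the remark-based argument that a non-eventually-periodic word has strictly increasing complexity. This is exactly the content of the proof of (iii)$\Rightarrow$(i) in Theorem~\ref{MH38}: if $p_w(n_0)=p_w(n_0+1)$, then $w$ is eventually periodic. That argument did not use the alphabet size, so it applies directly. Everything else is routine counting.
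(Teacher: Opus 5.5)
Your proof is correct and follows essentially the same route as the paper. The lower bound comes from strict monotonicity of $p_w$ for non-eventually-periodic words (the paper phrases this as the existence of a right-special factor of each length, which rests on the same Morse--Hedlund argument you invoke), followed by induction from $p_w(1)=d$. The construction is a Sturmian word prefixed by the $d-2$ unused letters, and the paper's own proof confirms that the intended complexity is $n+d-1$. Your squeeze argument, with the upper bound from counting starting positions and the lower bound from the first part, is a slightly tidier way to get the exact value than the paper's direct count.
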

	
	\begin{proof} Since $w$ is non eventually-periodic, it must admit at least one right-special factor of each length; therefore, its complexity function is strictly increasing. Since $p_w(1)=d$, an immediate induction yields $p_w(n)\geq n+d-1$ for every $n\geq 1$.
		
		For the second assertion, let $w_0$ be a Sturmian word over the binary alphabet $\{d-1,d\}$, and consider the $d$-ary infinite word obtained as the concatenation $w:=12\ldots(d-2)w_0$. Since $w_0$ is Sturmian, $w$ is non-eventually periodic and has the desired complexity function $n\mapsto n+d-1$. Indeed, for every $n\geq 1$, the word $w$ admits $d-2$ factors that start with the letters $1, 2, \ldots, d-2$, respectively, and $n+1$ factors that stem from the Sturmian word $w_0$, and are written with the sole letters $d-1$ and $d$.
	\end{proof}
	
	The reader may be disappointed by the construction made in the proof above: we simply picked a Sturmian word, and prefixed it by some additional letters in order to make it fit the definition of ``$d$-ary word''. These additional letters do not even reappear in $w$. We are far away from a satisfactory generalization of Sturmian words.

	To exclude these cumbersome examples, we may require 
	that every letter occurs infinitely many times in $w$. We can even take an extra precaution and require that \emph{every factor} occurs infinitely many times in $w$, with bounded gaps. We recall that this property is called \emph{uniform recurrence}, and that Sturmian words are uniformly recurrent (establishing this property was the aim of Exercise  \ref{exo_UR} in Chapter 1).

	\begin{question} Does there exist a non-eventually periodic, uniformly recurrent $d$-ary word $w$, with complexity $n\mapsto n+d-1$? \end{question}
	
	The answer is `yes', as evidenced by the following exercise.
	
	\begin{exercise}\label{exo:disguised_sturmian_word}
		Let $w_0 \in \{1,2\}^{\N}$ be a Sturmian word. Denote by $w$ the image of $w_0$ by the substitution $\sigma : 1 \mapsto 31, 2\mapsto 32.$\\
		1) Prove that $w$ is a non eventually-periodic, uniformly recurrent ternary word.\\
		2) Prove that the complexity of $w$ is $p_w:n\mapsto n+2$. (Hint: prove that $p_w(n+1)-p_w(n)=1$ by studying right-special factors of $w$.) \\
		3) Generalize the construction to the $d$-letter alphabet, for $d\geq 4$.
	\end{exercise}
	
	Again, the construction made in Exercise \ref{exo:disguised_sturmian_word} is  frustrating: the word $w$ is somewhat \emph{too poor} to be considered as a generalization of Sturmian words.
	
	\begin{question} Does there exist a non-eventually periodic, uniformly recurrent word, with complexity $n \mapsto n+d-1$, which is not an artificial distortion of Sturmian words?
	\end{question}
	
	The answer is `no'. The next theorem asserts that  all non eventually-periodic, uniformly recurrent $d$-ary words with complexity $n \mapsto n+d-1$ can be constructed as images of Sturmian words by substitutions similar to that of Exercise \ref{exo:disguised_sturmian_word}.

	\begin{theorem}[S. Ferenczi and C. Mauduit, 1997; see \cite{FM97}, Lemma 4] \label{th:quasi_Sturmians}
		Let $d\geq 3$, and $w$ be an infinite word. The word $w$ is recurrent, with complexity $n\in\N_{>0}\mapsto n+d-1$ if, and only if, there exist:
		\begin{itemize}
			\vspace{-0.3cm}
			\item[-] an integer $m\in\N$,
			\vspace{-0.3cm}
			\item[-] a Sturmian word $w_0\in\{a,b\}^\N$,
			\vspace{-0.3cm}
			\item[-] a partition of the alphabet $\{1,\ldots,d\}=\mathcal{A}\sqcup\mathcal{B}\sqcup\mathcal{C}$ where $\mathcal{A}=\{a_1,\ldots,a_{N_{\mathcal{A}}}\}$, $\mathcal{B}=\{b_1,\ldots,b_{N_{\mathcal{B}}}\}$ and $\mathcal{C}=\{c_1,\ldots,c_{N_{\mathcal{C}}}\}$, with both $\mathcal{C}\neq\emptyset$ and $\mathcal{A}\sqcup\mathcal{B}\neq\emptyset$,
		\end{itemize}
		\vspace{-0.1cm}
		such that $w=S^m\sigma(w_0)$, where $S$ denotes the shift operator and $\sigma$ is the substitution defined by
		\[
		a\mapsto c_1\ldots c_{N_{\mathcal{C}}}a_1\ldots a_{N_{\mathcal{A}}}, \hspace{0.5cm} b\mapsto c_1\ldots c_{N_{\mathcal{C}}}b_1\ldots b_{N_{\mathcal{B}}}.
		\]
		Furthermore, in this case, the word $w$ is uniformly recurrent.
	\end{theorem}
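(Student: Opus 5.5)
I will prove the two implications separately, beginning with the easier one: words of the form $S^m\sigma(w_0)$ are recurrent with complexity $n\mapsto n+d-1$.

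\emph{Sufficiency.} Let $w_0$ be Sturmian and $\sigma$ as in the statement. Recurrence and uniform recurrence of $w=S^m\sigma(w_0)$ follow from the uniform recurrence of $w_0$ (Exercise~\ref{exo_UR}), because $\sigma$ is non-erasing and has bounded image lengths. The shift $S^m$ changes neither property, and it preserves the factor set, since a recurrent word has the same factors as its shifts (compare Proposition~\ref{prop:basics}(iii)).

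For the complexity I will count right-special factors, as in Exercise~\ref{exo:disguised_sturmian_word}. Every letter of $\mathcal{C}$, and every letter of $\mathcal{A}\sqcup\mathcal{B}$ that is not the last letter of its block, has a unique right extension. The only branching happens after $c_{N_\mathcal{C}}$ (when both $\mathcal{A}$ and $\mathcal{B}$ are nonempty) or after the last letter of a block (when one of $\mathcal{A}$, $\mathcal{B}$ is empty, the branching is between $c_1$ and the other block). In either case, a right-special factor of length $n$ in $w$ desubstitutes uniquely to a right-special factor of $w_0$, and conversely. This uses the fact that blocks are synchronized: a factor of length at least $N_\mathcal{C}+\max(N_\mathcal{A},N_\mathcal{B})+1$ has a unique cutting into images. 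Short lengths I will check by hand. So $p_w(n+1)-p_w(n)=1$ for $n\ge1$ and $p_w(1)=d$, which gives $p_w(n)=n+d-1$.

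\emph{Necessity.} Assume $w$ is recurrent with $p_w(n)=n+d-1$ for $n\ge 1$. Then $w$ is not eventually periodic, by Remark~\ref{rk:MH38}, since its complexity is unbounded. It has exactly one right-special factor of each length, with exactly two right extensions. Recurrence also gives exactly one left-special factor of each length, because in a recurrent word every factor has at least one left extension, and counting the left extensions yields $p(n+1)-p(n)$.

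Let $x$ be the unique right-special letter, with extensions $y,z$. Every other letter has a unique successor. This defines the Rauzy graph of order $1$ on $d$ vertices. Recurrence forces it to be strongly connected, every vertex except $x$ has out-degree $1$, and $x$ has out-degree $2$. Such a graph is two cycles sharing a common path ending at $x$: one cycle through $y$, the other through $z$. The shared path is a word $c_1\dots c_k$ ending at $x$, and the two private arcs are $a_1\dots a_{N_\mathcal{A}}$ and $b_1\dots b_{N_\mathcal{B}}$, one of which may be empty. Since these arcs are disjoint, each letter lies in exactly one of $\mathcal{A},\mathcal{B},\mathcal{C}$, and $d\ge3$ ensures the partition conditions hold. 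The word $w$ is then a walk in this graph, so after removing a prefix of length $m$ it factors as a concatenation of the two loops $u_a=c_1\dots c_kA$ and $u_b=c_1\dots c_kB$. This gives $w=S^m\sigma(w_0)$ for some $w_0\in\{a,b\}^\N$.

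The main obstacle is showing that $w_0$ is Sturmian. My plan is to transport complexity through the desubstitution. Distinct right-special factors of $w_0$ of equal length would map under $\sigma$ to distinct right-special factors of $w$. I will then align them to equal length by taking common suffixes, as in the proof of Theorem~\ref{th:stability_sturmian}, using the synchronization of $\sigma$ to guarantee the resulting factors stay distinct. This shows $w_0$ has at most one right-special factor per length. Since $w_0$ is not eventually periodic (otherwise $w$ would be), it has at least one right-special factor per length, so $p_{w_0}(n)=n+1$.
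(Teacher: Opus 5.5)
The paper does not prove this statement; it only quotes it from Ferenczi--Mauduit. So this is an assessment of your argument on its own terms. Your overall plan is sound. The analysis of the order-$1$ Rauzy graph correctly gives the two-loop structure and the partition $\mathcal{A}\sqcup\mathcal{B}\sqcup\mathcal{C}$, and the common-suffix alignment does show that $w_0$ has at most one right-special factor per length.

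\textbf{Gap in the necessity direction.} The problem is the normal form. Removing the prefix $q$ of $w$ that precedes the first $c_1$ gives $S^{|q|}(w)=\sigma(w_0)$, i.e.\ $w=q\,\sigma(w_0)$. The statement instead requires $w=S^m\sigma(w_0)$, i.e.\ $w$ must be a \emph{suffix} of $\sigma(w_0)$.

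To repair this, observe that a nonempty $q$ is a proper suffix of $\sigma(t)$ for a unique $t\in\{a,b\}$. Uniqueness holds because $\sigma(a)$ and $\sigma(b)$ end with different letters. Hence $w=S^{|\sigma(t)|-|q|}\sigma(tw_0)$, and you must then prove that $tw_0$, not $w_0$, is Sturmian.

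This is not automatic, because prepending a letter can create a new factor: if $w_0$ contains $aa$ but not $bb$ and starts with $b$, then $bw_0$ contains $bb$. It does hold here, by recurrence of $w$:
\begin{itemize}
\item The prefix $q\,\sigma(\pref_n(w_0))\,c_1$ of $w$ reoccurs at some position $i>|q|$.
\item That occurrence therefore lies inside $\sigma(w_0)$, and the $c_1$ following $q$ sits at a positive position there.
\item Every $c_1$ of $\sigma(w_0)$ starts a block, so $q$ is the end of a block, which must be $\sigma(t)$.
\item The blocks that follow spell $\pref_n(w_0)$.
\end{itemize}
Thus $t\,\pref_n(w_0)\in\lan(w_0)$ for every $n$. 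It follows that $\lan(tw_0)=\lan(w_0)$, so $tw_0$ is Sturmian.

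\textbf{Smaller imprecisions.}
\begin{itemize}
\item In the necessity step, the words to compare are not the images $\sigma(r)$. When $\mathcal{A},\mathcal{B}\neq\emptyset$, $\sigma(r)$ is always followed by $c_1$, so it is not right-special. The right-special words are $\sigma(r)c_1\cdots c_{N_{\mathcal{C}}}$. After cutting to the common suffix $s$ and aligning lengths, distinctness comes precisely from the different last letters of $\sigma(a)$ and $\sigma(b)$; this also covers the case $\mathcal{A}=\emptyset$.
\item In the sufficiency step, a correspondence between right-special factors of $w$ and of $w_0$ does not by itself give exactly one per length, because $\sigma$ distorts lengths. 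You need that the right-special factors of $w_0$ form a single chain for the suffix order. Then every right-special factor of $w$ is a suffix of $\sigma(r)c_1\cdots c_{N_{\mathcal{C}}}$ for some long right-special $r$ of $w_0$, which gives at most one per length.
\item The unique right-special letter is $c_{N_{\mathcal{C}}}$ in every case, not ``the last letter of a block''.
\item The partition conditions come from $x\in\mathcal{C}$ and $y\neq z$, not from $d\geq 3$.
\end{itemize}
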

	
	(It seems that this theorem has been previously established for ternary alphabets by P. Alessandri in \cite{Ale95}, see also \cite{KT07}.)
	
	Therefore, in the literature, the infinite words with complexity $n \mapsto n+d-1$, and more generally, those whose complexity function is eventually equal to $n+k$, for a certain constant $k$, are called ``quasi-sturmian words'' \cite{Cas98}.


	\section{The condition of rational independence of letter frequencies.}
	
	In the previous section, we saw that the non
	eventual-periodicity condition, which is very efficient to exclude trivial words on the binary alphabet, is too weak to eliminate them on larger alphabets. 
	The aim of this section is to convince the reader that, on the $d$-letter alphabets, for arbitrary $d\geq 1$, a good condition to ask is the \emph{rational independence} of the letter frequencies.
	\medskip
	
	Let us recall that the real numbers $x_1, \ldots, x_d$ are ``rationally independent'' when none of them can be written as a linear combination of the others with rational coefficients or, equivalently, when the dimension of the linear space they spanned over the field $\Q$ is $d$.
	The conviction that, when dealing with arbitrary size alphabets, the condition of rational independence of letter frequencies should replace the non eventually-periodicity  is motivated by the  following four facts.

	\paragraph{Fact 1. The interesting part of the Morse--Hedlund theorem (namely: an infinite word with a low complexity is eventually periodic) can be formulated in terms of rational dependence of the letter frequencies.} Specifically, we have:
	
	\begin{proposition}\label{prop:MH_equiv_RI} Let $w \in \{1,2\}^{\N}$. If there exists an integer $n\geq 1$ such that $p_w(n)\leq n$, then $w$ has rationally dependent letter frequencies. \end{proposition}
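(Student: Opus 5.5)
By the Morse--Hedlund theorem (Theorem~\ref{MH38}), implication (iii) $\Rightarrow$ (i), the hypothesis $p_w(n)\leq n$ for some $n\geq 1$ forces $w$ to be eventually periodic: $w=u\,v^\omega$ with $u$ a finite (possibly empty) preperiod and $v$ a nonempty finite word. So it suffices to show two things: an eventually periodic word admits letter frequencies, and these frequencies are rationally dependent.

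First I compute the frequencies. For $n\geq |u|$, write $n-|u|=k|v|+r$ with $0\leq r<|v|$. The number of occurrences of letter $a$ in $\pref_n(w)$ is then $|u|_a+k|v|_a+\epsilon$ with $0\leq\epsilon<|v|$. Dividing by $n$ and letting $n\to\infty$ (so $k/n\to 1/|v|$, while the bounded terms vanish) gives
\[
f_a=\frac{|v|_a}{|v|}\in\Q,\qquad a\in\{1,2\}.
\]
The frequencies therefore exist and are rational.

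Next I exhibit the rational dependence. Here $f_1+f_2=1$, and both frequencies are rational, so $f_1=\frac{|v|_1}{|v|_2}\,f_2$ when $|v|_2\neq 0$; otherwise $f_2=0$, which is trivially a rational multiple of $f_1$. In either case the family $(f_1,f_2)$ spans a $\Q$-vector space of dimension at most $1<2$, so it is rationally dependent.

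There is no real obstacle. The only point needing care is the convention when a frequency is zero (a letter absent from the period), and the paper's definition via the dimension of the $\Q$-span handles that case automatically. Since the statement does not require $p_w(1)=2$, degenerate cases such as a unary word are covered in the same way.
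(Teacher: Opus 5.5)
Your proof is correct and follows the paper's argument: Morse--Hedlund gives $w=u\,v^\omega$, the letter frequencies are the rationals $|v|_a/|v|$, and two rationals are rationally dependent. You simply spell out the frequency computation and the zero-frequency case, which the paper leaves to the reader.
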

	
	\begin{proof}
		Under this assumption, the Morse--Hedlund theorem (Chapter 1, Theorem \ref{MH38}) guarantees that the word $w$ is eventually periodic; let us write it $w= u\cdot p^{\omega}$. One readily checks that the frequencies of the letters $1$ and $2$ in $w$ are equal to those in its periodic suffix $p^{\omega}$, that is $f_1=\vert p \vert_1/\vert p \vert$ and $f_2=\vert p \vert_2 / \vert p \vert$. These (rational) numbers are obviously rationally dependent.
	\end{proof}
	
	\paragraph{Fact 2. Sturmian words have rationally independent letter frequencies, and it is one of their central properties.} We recall from Chapter \ref{sec:MH38_Sturmian} that this property first emerges through their connection with the continued fraction. Specifically, Theorem \ref{th:connection_CF} asserts that the ratio of the letter frequencies $f_1/f_2$ of any Sturmian word is an irrational number, which is equivalent to asserting that $f_1$ and $f_2$ are rationally independent. This property further manifests itself in all geometrical and dynamical characterizations of Sturmian words, as highlighted by Remark \ref{rk:minimal_equiv_irratinality}: Sturmian words encode \emph{minimal} rotations of the circle, \emph{minimal} square billiard trajectories, \emph{minimal} linear flows on the two-dimensional torus, and half-lines of the plane with an \emph{irrational} slope. 
	
	\medskip
	Finally, it is worth noticing that, by consequence of their definition and of Proposition \ref{prop:MH_equiv_RI}, Sturmian words are, among the set of binary words with rationally independent letter frequencies, exactly those with minimal complexity.
	
	\paragraph{Fact 3. Classes of words which are commonly considered as good combinatorial or dynamical generalizations of Sturmian words also have rationally independent letter frequencies.} 
	
	It is the case, for example, of the words encoding minimal trajectories in an hypercubic billiard table (see, for example, \cite{AV24}), but also of Arnoux--Rauzy words \cite{And21,DHS22}, strict episturmian words \cite[Chapter 4]{And21thesis}, and Cassaigne--Selmer words \cite{CLL17,CLL22}.
	
	\paragraph{Fact 4. No $d$-ary quasi-sturmian word, for $d\geq 3$, has rationally independent letter frequencies.}
	
	It will be a consequence of the theorem of Tijdeman presented in the next section.
	In the particular case of recurrent $d$-ary words with complexity $n \in \N_{>0} \mapsto n+d-1$, this fact can alternatively be derived from Theorem \ref{th:quasi_Sturmians}, as in the exercise below.

	\begin{exercise}
		Let $d\geq 3$. Let $w$ be a recurrent $d$-ary word with complexity $n \in \N_{>0}\mapsto n+d-1$. Prove that $w$ admits letter frequencies, and that these letter frequencies span a rational linear space of dimension $2$. 
	\end{exercise}

	Now that the \emph{rational independence of the letter frequencies} appears as a reasonable replacement for the  \emph{non eventually-periodic} condition when the alphabet is of arbitrary size, a new question arises.
	
	\begin{question}Let $d\geq 3$. What is the minimal complexity for a $d$-ary word with rationally independent letter frequencies? \end{question}
	
	This question was fully answered by R. Tijdeman in 1999.
	
	\section{A theorem by Tijdeman}
	
	Unfortunately, Tijdeman's answer has remained mostly unnoticed (it is for example missing in \cite{BB13}) until 2022, when it was rediscovered by J. Cassaigne and the author of these lecture notes. \medskip
	
	Tijdeman's theorem states that the minimal complexity for an infinite $d$-ary word with rationally independent letter frequencies is the linear function $n\mapsto (d-1)n+1$

	\begin{theorem}[Tijdeman, 1999, \cite{Tij99}]\label{th:Tijdeman}
		Let $d\geq 1$. Let $w\in\{1,\ldots,d\}^\N$ be an infinite word with $p_w(1)=d$ and rationally independent letter frequencies. Then $p_w(n)\geq (d-1)n+1$ for every $n\in\N_{>0}$. Moreover, there exists a $d$-ary infinite word $w$, with rationally independent letter frequencies, and such that $p_w(n)=(d-1)n+1$ for all $n\in\N_{>0}$.
	\end{theorem}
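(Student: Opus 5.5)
The plan has two parts: a lower bound for the increments $p_w(n+1)-p_w(n)$, obtained from a linear-algebra argument on Rauzy graphs, and an explicit family of words for the existence part. The bound $p_w(n)\geq (d-1)n+1$ follows by induction from $p_w(1)=d$ once we show that
\[
p_w(n+1)-p_w(n)\geq d-1 \qquad \text{for every } n\geq 1 .
\]

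Fix $n\geq 1$ and let $G_n$ be the Rauzy graph of $w$. Its vertices are the $V=p_w(n)$ factors of length $n$. Its edges are the $E=p_w(n+1)$ factors of length $n+1$, each joining its prefix to its suffix. Since $w$ walks through every vertex, $G_n$ is (weakly) connected, so its cycle space has dimension $E-V+1$.

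Next I attach frequencies to the edges. By compactness, there is a subsequence of prefixes of $w$ along which the proportion of occurrences of every factor of length $n+1$ converges. Call the limits $\mu(e)$. The resulting vector $\mu\in\R_{\geq 0}^{E}$ satisfies Kirchhoff's law at every vertex: inflow equals outflow, because in a prefix the numbers of left and right extensions of a vertex differ by at most one. Hence $\mu$ lies in the real cycle space of $G_n$. Moreover, since the letter frequencies exist by hypothesis, they are recovered as the linear forms
\[
f_a=\sum_{e \text{ starts with } a}\mu(e).
\]

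The key algebraic step comes next. The cycle space is defined over $\Q$, since it is cut out by integer equations. So it has a rational basis $c_1,\ldots,c_k$ with $k=E-V+1$, and we can write $\mu=\sum_i t_i c_i$ with $t_i\in\R$. Each $f_a$ is then a rational linear combination of $t_1,\ldots,t_k$. The $\Q$-span of $(f_1,\ldots,f_d)$ has dimension $d$ by rational independence, and it sits inside the $\Q$-span of the $t_i$. Therefore
\[
d\leq k = p_w(n+1)-p_w(n)+1,
\]
which is exactly the required increment bound. I expect the main obstacle to be making this step rigorous. Two points need care. First, I must check carefully that the subsequential limit really defines a circulation. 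Second, I must justify that the cycle space has a $\Q$-basis of the right dimension; this is standard via spanning trees, where one takes the fundamental cycles. If weak connectivity of $G_n$ turned out to be delicate, I would instead restrict to the support of $\mu$ and use monotonicity of the cyclomatic number under subgraphs.

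For the existence part, the cases $d=1$ and $d=2$ are immediate: take a constant word, respectively a Sturmian word, which has irrational $f_1/f_2$ by Theorem~\ref{th:connection_CF}. For $d\geq 3$ I would take Arnoux--Rauzy words, for instance the $d$-bonacci fixed point of
\[
1\mapsto 12,\quad 2\mapsto 13,\quad \ldots,\quad (d-1)\mapsto 1d,\quad d\mapsto 1 .
\]
Its complexity is $(d-1)n+1$: for each $n$ it has exactly one right-special factor, with $d$ extensions, so each increment equals $d-1$. By Fact 3 and the references cited there, its letter frequencies are rationally independent. For this fixed point that independence can be checked directly: the frequencies form the Perron eigenvector of an irreducible matrix whose characteristic polynomial is irreducible over $\Q$.
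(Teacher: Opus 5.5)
Your proposal is correct and follows essentially the paper's route: your real cycle space is exactly $\ker_{\R}(M)$ for the paper's flow matrix $M$, your fundamental-cycle basis replaces the paper's rank--nullity computation ($\ker_{\Q}(M^t)=\Span_{\Q}((1,\ldots,1)^t)$ plus field-independence of the rank), and your direct bound $p_w(n+1)-p_w(n)\geq d-1$ for every $n$ is what the paper's contradiction-at-a-minimal-length argument actually proves. For existence, the paper likewise points to Arnoux--Rauzy (strict episturmian) words through the cited literature, so your explicit $d$-bonacci example with the irreducible characteristic polynomial is consistent with it and somewhat more self-contained.
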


Of course, when $d=2$, we retrieve the Proposition \ref{prop:MH_equiv_RI} that motivated our investigation.

	\section{Towards a characterization of $d$-ary infinite words with minimal complexity in Tijdeman's sense.}\label{sect:dary_min_complexity_characterization}

The aim of this subsection is to initiate the study of infinite words that achieve Tijdeman's minimal complexity. First, we claim that they are all recurrent.

\begin{proposition}\label{prop:tijdeman_recurrent}
	Let $d\geq 1$, and let $w$ be an infinite word with complexity $p_w(n)=(d-1)n+1$ and rationally independent letter frequencies. Then $w$ is recurrent.
\end{proposition}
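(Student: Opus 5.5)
We argue by contradiction, imitating the proof of Proposition~\ref{prop:basics}(ii). Suppose $w$ is not recurrent. Then some factor $u\in\lan_n(w)$ occurs only finitely many times in $w$; let $N$ be a position after which $u$ no longer occurs, and set $w':=S^N(w)$. The plan is to show that $w'$ is again a word to which Theorem~\ref{th:Tijdeman} applies, while its complexity is too small.

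\emph{Step 1: $w'$ inherits the hypotheses.} Letter frequencies are a tail property: removing the first $N$ letters changes each count $|\pref_m(w)|_a$ by at most $N$, so $w'$ has the same letter frequencies $(f_1,\ldots,f_d)$ as $w$. In particular they are rationally independent. Rational independence forces every $f_a$ to be nonzero, since a zero frequency would give a trivial rational relation. Hence every letter occurs infinitely often in $w$, and therefore in $w'$, so $p_{w'}(1)=d$. (For $d=1$ the statement is trivial, because the only such word is $1^\omega$, which is recurrent; we assume $d\geq 2$ if needed.)

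\emph{Step 2: the complexity drops.} Every factor of $w'$ is a factor of $w$, and $u\notin\lan_n(w')$, so $p_{w'}(n)\leq p_w(n)-1=(d-1)n$. Since $p_{w'}(1)=d$, we have $n\geq 2$, so $n\in\N_{>0}$ and Theorem~\ref{th:Tijdeman} applies to $w'$. It gives $p_{w'}(n)\geq (d-1)n+1$, which is a contradiction. Hence every factor of $w$ occurs infinitely often, that is, $w$ is recurrent.

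The only delicate point, and the main obstacle, is Step 1: checking that $w'$ still satisfies $p_{w'}(1)=d$. This is exactly where rational independence, and not merely the existence of frequencies, is needed, to rule out letters that disappear from the tail. Everything else is a direct transcription of the Sturmian argument.
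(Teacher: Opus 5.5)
Your proof is correct and follows the paper's own argument. You pass to a tail $w'$ that avoids $u$, note that the letter frequencies are unchanged (so they stay rationally independent and $p_{w'}(1)=d$, since no frequency can vanish), and contradict Theorem~\ref{th:Tijdeman} at length $n=|u|$. Your extra checks ($n\geq 2$, the case $d=1$) are fine; one small wording fix is that a zero frequency gives a \emph{nontrivial} rational relation $1\cdot f_a=0$, not a trivial one.
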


\begin{proof}
	Let $w$ be an infinite word  with complexity $p_w(n)=(d-1)n+1$ and rationally independent letter frequencies $f_1,\ldots,f_d$. We proceed by contradiction. Assume that $w$ is not recurrent, and denote by $u$ a factor that appears finitely many times in $w$. Let $w'$ be a suffix of $w$ that does not contain the factor $u$. Observe that $w'$ is still an infinite $d$-ary word (indeed, if a letter is missing in $w'$, then its frequency in $w$ was $0$, which is impossible), and that the letter frequencies of $w'$ are exactly those of $w$ (indeed, the frequencies of the letters do not depend on a finite prefix). Furthermore, for $n=\vert u\vert$, we have $p_{w'}(n)\leq p_w(n) -1<(d-1)n+1$. Therefore, $w'$ is an infinite $d$-ary word with rationally independent letter frequencies and a complexity smaller than the minimal bound given by Tijdeman's Theorem \ref{th:Tijdeman}. We have reached a contradiction. 
\end{proof}

It is natural to ask which classical classes  of infinite words are optimal for Tijdeman's theorem
	
	\begin{question} Let $d\geq 3$. Which $d$-ary infinite words have rationally independent letter frequencies, and complexity $p_w:n\in\N_{>0}\mapsto (d-1)n+1$? \end{question}
	
	We recall that for $d=1$ and $d=2$, these classes of words are completely understood.
		\begin{itemize}
		\item For $d=1$, it is the single word  $\{w=111111111111...\}$. Indeed, it is the unique $1$-ary word, and it trivially has rationally independent letter frequencies and the desired complexity $n \mapsto 1$.
		\item For $d=2$, these are exactly Sturmian words. Indeed, Sturmian words are, by definition, the infinite words with complexity $n\mapsto n+1$; moreover, their letter frequencies are rationally independent by Theorem \ref{th:connection_CF}.
		\end{itemize}
		By contrast, to this day, only partial answers are known when $d\geq 3$. 
	
	\begin{itemize}
	\item For $d=3$, we know that the class of words with Tijdeman's minimal complexity contains at least all Arnoux--Rauzy words, all Cassaigne--Selmer words, and almost all codings of interval exchange transformations on three intervals: indeed, these are ternary words with complexity $n\mapsto 2n+1$, and rationally independent letter frequencies.

	\item More generally, for arbitrary alphabet size $d$, the class of words with Tijdeman's minimal complexity contains at least all strict episturmian words, and almost all codings of interval exchange transformations on $d$ intervals.
	\end{itemize}

	 Interestingly, all the aforementioned classes of words (namely: Sturmian, Arnoux--Rauzy, Cassaigne--Selmer and strict episturmian words, but also the  codings of regular interval exchange transformations) belong to a meta-class of infinite words known as \emph{dendric words} (which will be defined in the next chapter). 
	 In fact, it will be a by-product of our alternative proof of Tijdeman's theorem that all infinite words with minimal complexity in Tijdeman's sense are dendric.
	 
\begin{theorem}[Andrieu, Cassaigne, 2022, \emph{unpublished}]\label{th:dendric} Let $d\geq 1$. Every infinite $d$-ary word with rationally independent letter frequencies and complexity $n\mapsto (d-1)n+1$ is dendric.
	\end{theorem}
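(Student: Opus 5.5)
We will work with Rauzy graphs and linear algebra over $\Q$, and combine a rank argument with the classical counting formula for bilateral multiplicities. Throughout, $w$ is a $d$-ary word with rationally independent letter frequencies $f=(f_1,\ldots,f_d)$ and $p_w(n)=(d-1)n+1$. We also set $p_w(0)=1$, so that $s(n):=p_w(n+1)-p_w(n)=d-1$ for every $n\geq 0$. By Proposition~\ref{prop:tijdeman_recurrent}, $w$ is recurrent. Hence every factor $u$ has at least one left and one right extension, and every Rauzy graph $G_n$ is strongly connected. Here $G_n$ has vertices $\lan_n(w)$ and edges $\lan_{n+1}(w)$.

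For a factor $u$, let $E(u)$ denote its extension graph: the bipartite graph whose left vertices are the letters $a$ with $au\in\lan(w)$, whose right vertices are the letters $b$ with $ub\in\lan(w)$, and which has an edge $(a,b)$ whenever $aub\in\lan(w)$. Recall that $w$ is dendric when every $E(u)$ is a tree. Let $m(u)=e(u)-\ell(u)-r(u)+1$ be the bilateral multiplicity, where $e,\ell,r$ count the edges and the left and right vertices of $E(u)$. We will use the standard identity
\[
\sum_{u\in\lan_n(w)} m(u)=s(n+1)-s(n)=0,
\]
which is obtained by double counting extensions.

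\textbf{Step 1 (the rank argument).} First we assume that factor frequencies exist; this assumption is discussed at the end. Let $F_{n}\subset\Q^{\lan_{n+1}(w)}$ be the space of rational flows on $G_n$, that is, the edge weightings satisfying Kirchhoff's law at each vertex. Since $G_n$ is connected, $\dim_\Q F_n=p_w(n+1)-p_w(n)+1=d$.

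The frequency vector $\mu$ of the factors of length $n+1$ satisfies these Kirchhoff relations, and they have rational coefficients. Hence $\mu\in F_n\otimes\R$. Let $L:F_n\to\Q^d$ be the rational linear map that sends an edge weighting to the sum of its weights grouped by first letter. Then $f=L(\mu)$.

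If $L$ were not surjective, $f$ would lie in a proper rational subspace of $\R^d$, which contradicts the rational independence of $f$. So $L$ is surjective, and since $\dim F_n=d$, it is also injective.

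\textbf{Step 2 (no cycles).} Suppose that some $E(u)$ with $|u|=n$ contains a cycle $a_1b_1a_2b_2\cdots a_kb_k$. Put alternating weights $+1,-1$ on the corresponding edges $a_1ub_1,\ a_2ub_1,\ a_2ub_2,\ \ldots$ of $G_{n+1}$.

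This weighting is a flow. At each vertex $a_iu$ or $u b_j$ of $G_{n+1}$ there are exactly two cycle edges, carrying opposite signs, so Kirchhoff's law holds there. It is also nonzero. Moreover, each first letter $a_i$ occurs twice with opposite signs, so $L$ maps this flow to $0$. This contradicts the injectivity of $L$ on $F_{n+1}$ established in Step 1 (applied at length $n+1$). Hence every $E(u)$ is a forest, and therefore $m(u)=1-c(u)\leq 0$, where $c(u)\geq 1$ is the number of connected components of $E(u)$.

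\textbf{Step 3 (connectedness).} Combining Step 2 with the identity $\sum_{|u|=n}m(u)=0$ forces $m(u)=0$ for every $u$. Hence $c(u)=1$, and every extension graph is a tree; that is, $w$ is dendric. The case $u=\epsilon$ is covered by the same argument, because $s(1)-s(0)=0$.

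I expect the main obstacle to be Step 1. Theorem~\ref{th:dendric} only assumes that letter frequencies exist, not frequencies of longer factors. I would first try to replace $\mu$ by a limit point of empirical frequency vectors along prefixes. Any such limit point is still a flow on $G_n$ and is still mapped by $L$ onto $f$, which is all the argument uses. One must also check that Kirchhoff's law holds in the limit, since it holds up to an $O(1)$ error on every prefix. The remaining care needed is to track $\Q$-dimension versus $\R$-dimension correctly, so that surjectivity of $L$ follows from the rational independence of $f$.
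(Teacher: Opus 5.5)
Your proof is correct, and it takes a genuinely different route from the paper's.

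\textbf{The paper's route: connectedness first.} Assuming $\ext(u)$ is disconnected, the paper splits the vertex $u$ of the length-$n$ Rauzy graph into two vertices $u_1,u_2$ according to the two sides of the cut. It then checks that the modified graph is still semi-connected, so its rational flow space has dimension $|E'|-|V'|+1=d-1$. It also checks that the frequency vector still satisfies Kirchhoff's law at $u_1,u_2$, because $f_{aub}=0$ across the cut. Hence the letter frequencies span at most $d-1$ dimensions, and Proposition~\ref{prop:charac_dendric} (complexity plus connectedness implies dendric) finishes.

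\textbf{Your route: acyclicity first.} You repackage Lemmas~\ref{lemma:eigenvector}--\ref{lemma:dimQ} as the statement that the first-letter map $L\colon F_n\to\Q^d$ is an isomorphism for every $n$. A cycle in $\ext(u)$ then gives an explicit nonzero $\pm1$ flow on the length-$(n+1)$ Rauzy graph lying in $\ker L$, which is impossible. The second-difference identity \eqref{eq:second_derivative} then upgrades forests to trees; this is exactly the acyclic variant of Proposition~\ref{prop:charac_dendric} that the paper leaves as an exercise.

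\textbf{Comparison.} Both arguments invoke the counting identity once, to pass from one half of ``tree'' to the other. Your version never leaves genuine Rauzy graphs: there is no modified graph, no re-check of semi-connectedness, and no refined Kirchhoff laws at split vertices. The contradiction comes from a concrete rational flow and a clean structural fact: no nonzero rational flow is invisible to letter counts. The paper's version works at the same level $n$ as the defective extension graph, and reuses its three lemmas essentially verbatim on the split graph.

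\textbf{Your stated obstacle.} The concern in your last paragraph is settled exactly as you suggest. Lemma~\ref{lemma:eigenvector} applied to a subsequential limit gives $\mu\in\ker_{\R}(M)$ with $L(\mu)=f$, and Lemma~\ref{lemma:linear_algebra}(ii) gives $\ker_{\R}(M)=\ker_{\Q}(M)\otimes\R$.

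\textbf{One small imprecision in Step 2.} A vertex can be simultaneously of the form $a_iu$ and $ub_j$ (for instance $u=a^n$), and an edge $a_iub_j$ can even be a loop. Such a vertex meets four cycle edges, not two. Kirchhoff's law still holds, because at every vertex the outgoing cycle weights and the incoming cycle weights each sum to zero separately.
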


Since every $d$-ary dendric word has complexity $n \mapsto (d-1)n+1$ (see \cite{BDDLP15} or Proposition \ref{prop:charac_dendric} in Chapter 3), we deduce the following characterization.

\begin{corollary}
 Let $d\geq 1$. The set of infinite $d$-ary words with rationally independent letter frequencies and complexity $n\mapsto (d-1)n+1$, and the set of $d$-ary dendric words with rationally independent letter frequencies, coincide.
\end{corollary}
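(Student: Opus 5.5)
The corollary is a set equality, so I will prove the two inclusions separately and take the two needed facts as stated just before it.

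\emph{First inclusion.} Let $w$ be a $d$-ary word with rationally independent letter frequencies and complexity $n\mapsto (d-1)n+1$. Theorem~\ref{th:dendric} applies directly and says that $w$ is dendric. Its frequencies are rationally independent by assumption, so $w$ lies in the second set.

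\emph{Second inclusion.} Let $w$ be a $d$-ary dendric word with rationally independent letter frequencies. By the quoted complexity result for dendric words (\cite{BDDLP15}, or Proposition~\ref{prop:charac_dendric} in Chapter~3), $p_w(n)=(d-1)n+1$ for all $n\geq 1$. Its frequencies are rationally independent by assumption, so $w$ lies in the first set.

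The only thing to check is that the hypotheses of these results line up. The complexity formula for dendric words is usually stated for \emph{recurrent} dendric words, and recurrence is often built into the definition of the class. I will therefore verify this against the definition given in the next chapter. If recurrence is not part of that definition, I will argue that the rational independence of the frequencies (every letter has positive frequency) together with the extension-graph condition is enough to apply Proposition~\ref{prop:charac_dendric}. In the other direction, a word achieving Tijdeman's bound is recurrent by Proposition~\ref{prop:tijdeman_recurrent}, so Theorem~\ref{th:dendric} applies without extra hypotheses. Beyond this hypothesis check there is no real obstacle, since all the substantive work lies in Theorem~\ref{th:dendric}.
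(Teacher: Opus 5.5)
Your proposal is correct and matches the paper's argument: one inclusion is Theorem~\ref{th:dendric}, and the other is the fact that every $d$-ary dendric word has complexity $n\mapsto(d-1)n+1$ (Proposition~\ref{prop:charac_dendric}), with the frequency condition carried over unchanged. Your recurrence check is harmless but unnecessary, since neither Proposition~\ref{prop:charac_dendric} (its second-difference identity holds for any infinite word) nor Theorem~\ref{th:dendric} assumes recurrence.
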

	 
	 Unfortunately, not all dendric words have rationally independent letter frequencies. Notably, some codings of regular $d$-interval exchange transformations do not: see, for example,\cite[Example 3.3]{Via06}. Therefore, the question of characterizing infinite $d$-ary words with Tijdeman's minimal complexity, in the spirit of the elegant combinatorial, geometrical and dynamical characterizations of Sturmian words remains an open problem for $d\geq 3$ at the time of writing.

	\section{The original and the strengthened versions of Tijdeman's theorem}\label{ssect:tijdeman_complet_t_ameliore}

	The original theorem by Tijdeman is, in fact, stronger than the Theorem \ref{th:Tijdeman} that we stated. It formalizes and \emph{quantifies} the idea according to which, if the complexity of an infinite word $w$ does not grow fast enough, then $w$ is \emph{algebraically poor}. In this last section, we present the complete, original version of Tijdeman's theorem, and we introduce a new, strengthened version of it. 
	
	\medskip

	To state the original form of Tijdeman's theorem, in which the existence of the letter frequencies is not even required, we first need to introduce some notation. Given a $d$-ary infinite word $w$, we consider the sequence
	\[
	\Big(\frac{1}{n}(|\pref_n(w)|_1,\ldots,|\pref_n(w)|_d)\Big)_n
	\]
	which lives in the compact set $[0,1]^d$. If this sequence converges, the limit is, by definition, the frequencies of the letters in $w$. If not, we denote by $F_w$ the set of all its subsequential limits:
	\begin{equation}\label{eq:subfrequencies}
		F_w:=\Big\{(f_1,\ldots,f_d)\in[0,1]^d \;\Big|\; \begin{array}{c}\text{there exists an increasing sequence $(n_l)_l\subseteq\N$ s.t.} \\ \forall\,i\in\{1,\ldots,d\}, \; |\pref_{n_l}(w)|_i/n_l\underset{l\to\infty}{\longrightarrow} f_i \end{array}\Big\}.
	\end{equation}
	
	\noindent We now define the integer $\delta_w \in\{1,\ldots,d\}$ as the \emph{minimal degree of irrationality} of the letters in $w$:
	\[
	\begin{array}{c}
		\ds\delta_w:=\min \big\{\dim\Span_\Q(f_1,\ldots,f_d) \,|\,(f_1,\ldots,f_d) \in F_w\big\}.
	\end{array}
	\]

	\begin{theorem}[Tijdeman 1999, original version]\label{th:Tij_optimal}
		Let $w$ be an infinite $d$-ary infinite word. Denote by $\delta_w$ the minimal degree of irrationality of the letters in $w$. Then, for every $n\geq 1$,
		\[
		p_w(n)\geq (\delta_w-1)(n-1)+d.
		\]
	\end{theorem}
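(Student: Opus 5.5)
The plan is to prove the inequality one length at a time: for every $n\geq 1$,
\[
p_w(n+1)-p_w(n)\;\geq\;\delta_w-1 .
\]
Summing these increments from $p_w(1)=d$ then gives $p_w(n)\geq d+(n-1)(\delta_w-1)$, which is the statement. To get each increment bound, I will use the Rauzy graph $G_n$ together with a linear-algebra argument on its cycle space.

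\emph{The graph and its cycle space.} The vertices of $G_n$ are the $p_w(n)$ factors of length $n$, and the edges are the $p_w(n+1)$ factors of length $n+1$. The edge $u=av$, with $a$ a letter and $|v|=n$, goes from $\pref_n(u)$ to $v$ and is labelled by the last letter of $u$. The word $w$ is an infinite walk in $G_n$ that uses every edge, so $G_n$ is connected. Its cycle space $Z\subseteq\Z^{E}$ therefore has rank $p_w(n+1)-p_w(n)+1$. Let $\pi:\Z^E\to\Z^d$ be the abelianization map, sending each edge to the unit vector of its label. Then $\Lambda:=\pi(Z)$ is a subgroup of $\Z^d$, and its rank $r$ satisfies $r\leq p_w(n+1)-p_w(n)+1$.

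\emph{Frequencies lie in $\Lambda\otimes\R$.} Fix $(f_1,\ldots,f_d)\in F_w$ along a sequence $(n_l)_l$. The prefix $\pref_{n_l}(w)$ corresponds to a walk of length $n_l-n$ in $G_n$, starting at the vertex $\pref_n(w)$. Its Parikh vector equals the Parikh vector of $\pref_n(w)$ plus the sum of the labels along the walk. The key step is a cycle decomposition: repeatedly excise closed subwalks (each a simple cycle) from the walk. This writes the walk as a multiset of cycles plus a residual simple path, which has fewer than $p_w(n)$ edges. Hence
\[
(|\pref_{n_l}(w)|_i)_i=\lambda_l+O(p_w(n)+n),\qquad \lambda_l\in\Lambda .
\]
Dividing by $n_l$ and letting $l\to\infty$ shows that $f$ lies in the real subspace $V:=\Lambda\otimes\R$, which has dimension $r$.

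\emph{From the real subspace to the $\Q$-dimension.} The subspace $V$ is defined over $\Q$, so its annihilator contains $d-r$ linearly independent rational vectors $a\in\Q^d$. Each satisfies $\sum_i a_if_i=0$, so these are $d-r$ independent $\Q$-linear relations among $f_1,\ldots,f_d$. Consequently
\[
\dim\Span_\Q(f_1,\ldots,f_d)\leq r\leq p_w(n+1)-p_w(n)+1 .
\]
Taking the minimum over $F_w$ gives $\delta_w-1\leq p_w(n+1)-p_w(n)$, which is the increment bound.

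The step I expect to need the most care is the cycle decomposition with a bounded remainder. I also need to make sure that using the whole graph $G_n$, rather than only the part visited infinitely often, is harmless. It is: a larger connected graph only increases the rank of the cycle space, so the bound still holds. Apart from these points, the argument is routine linear algebra.
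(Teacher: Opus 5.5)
Your proof is correct and is essentially the paper's argument: the flow matrix $M$ is the signed incidence matrix of the Rauzy graph, so $\ker M$ is your cycle space, of dimension $p_w(n+1)-p_w(n)+1$ by connectedness, and your excision of cycles from the prefix walks (with a bounded residual path) does the job of the Kirchhoff law in Lemma~\ref{lemma:eigenvector}, with the small advantage that pushing forward by $\pi$ before taking limits spares you the extra subsequence extraction that makes all factor pseudo-frequencies converge. Note also that your bound $\dim\Span_\Q(f_1,\ldots,f_d)\le p_w(n+1)-p_w(n)+1$ holds for every point of $F_w$, so taking the minimum is unnecessary: you have in fact proved the strengthened $\Delta_w$-version (Theorem~\ref{th:Tij_ameliore_chap3}), which is what the paper proves, and the stated $\delta_w$-version follows from $\delta_w\le\Delta_w$.
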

	
	In fact, Tijdeman's original theorem can be strengthened by replacing the minimal degree $\delta_w$ of irrationality of the letters by the \emph{maximal degree} of irrationality of the letters:
	\begin{equation}\label{eq:max_degree_irrationality_def}
		\begin{array}{c}
			\ds\Delta_w:=\max\big\{\dim\Span_\Q(f_1,\ldots,f_d) \,|\,(f_1,\ldots,f_d)\in F_w\big\}.
		\end{array}
	\end{equation}
	In the Exercise \ref{exo:delta_neq_Delta} below, we exhibit an infinite word $w$ for which the minimal and maximal degrees of irrationality of the letters differ.

	\begin{theorem}[Andrieu, Cassaigne, 2022, \emph{unpublished}]\label{th:Tij_ameliore}
		Let $w$ be an infinite $d$-ary word. Denote by $\Delta_w$ the maximal degree of irrationality of the letters in $w$. Then, for every $n\geq 1$,
		\[
		p_w(n)\geq (\Delta_w-1)(n-1)+d.
		\]
	\end{theorem}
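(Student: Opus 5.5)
The plan is to bound every increment $p_w(n)-p_w(n-1)$ from below by $\Delta_w-1$, using the Rauzy graph and its cycle space, and then sum the increments starting from $p_w(1)=d$. Concretely, I will show that for every $n\geq 2$,
\[
p_w(n)-p_w(n-1)\;\geq\;\Delta_w-1 .
\]
Summing these inequalities for $n'=2,\ldots,n$ then gives $p_w(n)\geq p_w(1)+(\Delta_w-1)(n-1)=(\Delta_w-1)(n-1)+d$.

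Fix $n\geq 2$ and let $G$ be the Rauzy graph of order $n-1$. Its vertices are the factors in $\lan_{n-1}(w)$. Each $u=a_1\cdots a_n\in\lan_n(w)$ gives an edge from $a_1\cdots a_{n-1}$ to $a_2\cdots a_n$, labelled by the vector $e_{a_n}\in\Z^d$ of its last letter. So $G$ has $p_w(n-1)$ vertices and $p_w(n)$ edges. Reading $w$ traces an infinite walk in $G$, which visits every vertex and every edge. Hence the underlying undirected graph is connected, and the cycle space of $G$ has dimension $p_w(n)-p_w(n-1)+1$. Let $\Phi:\Q^{E(G)}\to\Q^d$ be the linear map sending an edge to its label. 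Let $C\subseteq\Q^d$ be the image under $\Phi$ of the cycle space. Then $C$ is a $\Q$-subspace with $\dim C\leq p_w(n)-p_w(n-1)+1$.

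The key step is to show that every $f\in F_w$ lies in $C\otimes\R$. Take a prefix $\pref_m(w)$ with $m\geq n$. Its Parikh vector equals the Parikh vector of its first $n-1$ letters plus the sum of the labels of the $m-n+1$ edges of the walk it traces in $G$. A walk in a finite graph decomposes into a simple path, with at most $p_w(n-1)$ edges, plus a collection of closed walks. This is the usual loop-erasure argument. Each closed walk's edge-count vector lies in the cycle space, so its labelled sum lies in $C$. Therefore the Parikh vector of $\pref_m(w)$ is an element of $C$ plus an error of norm bounded independently of $m$. Dividing by $m$ and passing to a subsequential limit shows $f\in C\otimes\R$, which is closed.

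Finally, $C\otimes\R$ is spanned over $\R$ by a rational basis $c_1,\ldots,c_k$ of $C$, with $k\leq p_w(n)-p_w(n-1)+1$. So each coordinate $f_i$ is a real combination $\sum_j \lambda_j (c_j)_i$ with rational $(c_j)_i$. Hence every $f_i$ lies in $\Span_\Q(\lambda_1,\ldots,\lambda_k)$, and $\dim\Span_\Q(f_1,\ldots,f_d)\leq k$. Applying this to a point of $F_w$ that realises the maximum $\Delta_w$ gives $\Delta_w\leq p_w(n)-p_w(n-1)+1$, which is the claimed increment bound.

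The step I expect to need the most care is the inclusion $F_w\subseteq C\otimes\R$. Three points matter there. The decomposition must work without assuming recurrence, since the walk may leave transient parts of $G$; the loop-erasure argument handles this because only the bounded simple path is left over. The boundary terms (the first $n-1$ letters and the simple path) must be uniformly bounded in $m$. And the cycle-space dimension formula must be applied to weak connectivity, which is guaranteed because the walk of $w$ covers $G$. The remaining steps, namely the rational-dimension count and the summation of increments, are routine.
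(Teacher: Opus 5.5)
Your proof is correct and is essentially the paper's argument. The cycle space you use is exactly $\ker_\Q(M)$ for the flow matrix $M$ of length $n-1$, its dimension $p_w(n)-p_w(n-1)+1$ is what the paper derives from semi-connectedness and rank--nullity, and the rational-basis step is the same. The differences are only in packaging: you prove the increment bound $p_w(n)-p_w(n-1)\geq\Delta_w-1$ for every $n$ directly rather than through a minimal counterexample, and you place the letter-frequency vector in the real span of a rational subspace by loop erasure on prefix walks (projected to $\Q^d$ via $\Phi$) rather than by the Kirchhoff conservation law for factor pseudo-frequencies, which incidentally spares you the fact that a $\Q$-basis of $\ker_\Q(M)$ also spans $\ker_\R(M)$.
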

	
	Theorem \ref{th:Tij_ameliore} will be established in Chapter 3.
	
	\medskip
	
	We conclude this chapter by two remarks and an exercise.
	
	\begin{remark}
		If $w$ admits frequencies of letters, and if furthermore these frequencies are rationally independent, both Theorems \ref{th:Tij_optimal} and \ref{th:Tij_ameliore} boil down to Theorem~\ref{th:Tijdeman}. Indeed, in this case, we have $\Delta_w = \delta_w = d$, from which we immediately derive \[(\Delta_w-1)(n-1)+d = (d-1)n+1.\]
	\end{remark}


	\begin{remark}
		It will follow from its proof that Theorem~\ref{th:Tij_ameliore} can be equivalently stated as follows: ``if there exists an integer $k\geq 1$ and a length $m\geq 0$ such that $p_w(m+1)-p_w(m)\leq k-1$, then $\Delta_w\leq k$'', or equivalently, ``$\Delta_w\leq\min_{m\geq 0} \,\{p_w(m+1)-p_w(m)+1\}$''. 
		This formulation is to be understood as the $d$-ary counterpart of ``a word is eventually periodic if and only if its complexity function is locally constant'' which is, as we discussed in Remark \ref{rk:MH38} of Chapter 1, equivalent to the Morse--Hedlund theorem.
	\end{remark}
	
	\begin{exercise}[Construction of an infinite binary word for which $\delta_w<\Delta_w$]\label{exo:delta_neq_Delta} Let $(u_n)$ be the sequence of finite words constructed as follows. Initially, $u_0= 1$. Then, for every even integer $n$, we concatenate, at the end of $u_n$, as many occurrences of $0$ as needed, so that $u_{n+1}$ contains the same numbers of both letters $0$ and $1$ (this will always be possible since for every even integer $n$, $u_n$ contains more $1$s than $0$s). For every odd integer $n$, we concatenate $n\times \vert u_n\vert$ occurrences of $1$ at the end of $u_n$, so that $u_{n+1}$ is $n+1$ times longer than $u_n$. We thus obtain $u_0 = 1$, $u_1 = 10$, $u_2=1011$, $u_3=101100$, $u_4=101100111111111111111111$, etc. By construction, the sequence of finite words $(u_n)$ converges to an infinite word, which we denote by $w$:
		\[w = 101100111111111111111111000000000000000000...\]\\
		1) Prove that $w$ does not admit frequencies of letters, and that both $(0,1)$ and $(1/2,1/2)$ belong to $F_w$ (as defined in \eqref{eq:subfrequencies}).\\
		2) Prove that actually $F_w=\{(t,1-t): 0\leq t \leq 1/2\}$. Deduce that $\delta_{w} = 1$ and $\Delta_{w} = 2$.
		
		\medskip
		
		Now, we replace, one by one, each occurrence of $1$ in $w$ by the letters successively occurring in the Fibonacci word
		$w_{Fibo}= 1211212112112121121211211212112112...$
		(We recall that the Fibonacci word was defined in Example \ref{ex:def_fibo}, and that we established that its letter frequencies exist and satisfy $f_1/f_2 = \varphi$, where $\varphi$ denotes the golden ratio.)\
		We thus obtain an infinite word written with the letters $0$, $1$ and $2$:
		\[w' =
		102100121211211212112121000000000000000000...\]
		3) Prove that $F_{w'}=\big\{\big(t,(1-t)f_1,(1-t)f_2\big): 0\leq t \leq 1/2\big\}$. Deduce that $\delta_{w'} = 2$ and $\Delta_{w'} = 3$. Check that Theorem \ref{th:Tij_optimal} only ensures that $p_{w'}(n)\geq n+2$, whereas Theorem \ref{th:Tij_ameliore} guarantees that $p_{w'}(n)\geq 2n+1$.
	\end{exercise}


		
	\mychapter{An alternative, algebraic proof of Tijdeman's theorem and its consequences (2022)}{An alternative, algebraic proof of Tijdeman's theorem and its consequences (2022)}{An alternative, algebraic proof of Tijdeman's theorem and its consequences }\label{sec:Proof_Tijdeman}
	
	\section{Introduction}
	In this third and last chapter, we present an alternative proof of Tijdeman's original theorem from 1999. This proof was obtained, but unpublished, by J. Cassaigne and the author in 2022. It actually establishes a slightly stronger result than Tijdeman's original theorem, namely:
	
	\begin{theorem}[Andrieu, Cassaigne; also numbered Theorem \ref{th:Tij_ameliore} in Chapter 2]\label{th:Tij_ameliore_chap3}
		Let $w$ be an infinite $d$-ary  word. Denote by $\Delta_w$ the maximal degree of irrationality of the letters in $w$. Then, for every $n\geq 1$,
		\[
		p_w(n)\geq (\Delta_w-1)(n-1)+d.
		\]
	\end{theorem}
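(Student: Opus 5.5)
The plan is to bound $\Delta_w$ from above by the first difference of the complexity at every length, and then sum these bounds. Concretely, I will show that for every $m\geq 1$,
\[
\Delta_w \leq p_w(m+1)-p_w(m)+1 .
\]
This is the reformulation announced in the remark after Theorem~\ref{th:Tij_ameliore}. Given it, telescoping yields
\[
p_w(n)=p_w(1)+\sum_{m=1}^{n-1}\big(p_w(m+1)-p_w(m)\big)\geq d+(n-1)(\Delta_w-1),
\]
which is the statement.

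\emph{Setup.} Fix $m\geq1$ and consider the Rauzy graph $\G_m$ of order $m$. Its vertices are the factors in $\lan_m(w)$ and its edges are the factors in $\lan_{m+1}(w)$; the edge $v$ goes from $\pref_m(v)$ to the length-$m$ suffix of $v$. So $V=p_w(m)$ and $E=p_w(m+1)$. The infinite word $w$ is a walk in $\G_m$ visiting every vertex, hence $\G_m$ is (weakly) connected.

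Take $(f_1,\ldots,f_d)\in F_w$ realising $\Delta_w$, along an increasing sequence $(n_l)$. By compactness of $[0,1]^E$, pass to a subsequence along which, for every $v\in\lan_{m+1}(w)$, the normalised number of occurrences of $v$ starting in the first $n_l$ positions converges to some $x_v\in[0,1]$. Call $x=(x_v)_v\in\R^E$ the limit edge-frequency vector.

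\emph{Two linear facts, each obtained by counting and dividing by $n_l$.} Error terms are $O(m)$ counts, hence vanish after dividing by $n_l\to\infty$.
\begin{itemize}
\item \emph{Kirchhoff law.} For every vertex $u$, the number of occurrences of $u$ in a prefix is, up to $O(1)$, both the number of edges entering $u$ and the number of edges leaving $u$. In the limit, $Mx=0$, where $M$ is the integer $V\times E$ incidence matrix of $\G_m$.
\item \emph{Letter frequencies.} For each letter $a$, one has $f_a=\sum_{v\text{ starting with }a}x_v$. Thus $(f_1,\ldots,f_d)$ is the image of $x$ under an integer matrix.
\end{itemize}

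\emph{Algebraic step.} Since $\G_m$ is connected, $\ker M$ has dimension $E-V+1$. Because $M$ has integer entries, this kernel is spanned over $\R$ by rational vectors $y_1,\ldots,y_k$ with $k=E-V+1$. Writing $x=\sum_i r_iy_i$ with $r_i\in\R$, every coordinate of $x$ lies in $\Span_\Q(r_1,\ldots,r_k)$. Hence so does every $f_a$, which is an integer combination of coordinates of $x$. Therefore
\[
\Delta_w=\dim\Span_\Q(f_1,\ldots,f_d)\leq k=p_w(m+1)-p_w(m)+1,
\]
as required.

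\emph{Main obstacle.} The step needing the most care is the passage to the limit: $F_w$ only records letter frequencies, so edge frequencies must be extracted along a further subsequence and the $O(1)$ boundary errors in the Kirchhoff law must be controlled. A second point to verify is that connectedness of $\G_m$ makes the kernel dimension exactly $E-V+1$. It does not matter if $\G_m$ fails to be strongly connected, since the rank of $M$ only depends on weak connectivity.
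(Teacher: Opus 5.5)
Your proof is correct and uses the same ingredients as the paper's proof: the flow (incidence) matrix of the Rauzy graph, the Kirchhoff law for subsequential factor frequencies, the rank computation $\dim\ker M = p_w(m+1)-p_w(m)+1$ from connectivity, and the fact that a rational basis of $\ker_{\Q}(M)$ spans $\ker_{\R}(M)$. The only difference is organizational: you prove the per-length bound $\Delta_w\leq p_w(m+1)-p_w(m)+1$ for every $m$ and telescope, whereas the paper argues by contradiction at the minimal length $m+1$ where the inequality fails (which gives the jump $p_w(m+1)-p_w(m)\leq \Delta_w-2$); your direct form is exactly the reformulation the paper itself mentions in its remark.
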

We recall that in the theorem above, the frequencies of the letters in $w$ are not required to exist; instead, we work with the set $F_w$ of their ``pseudo-frequencies'', that is, the set of all subsequential limits of the sequence
	$
(\frac{1}{n}(|\pref_n(w)|_1,\ldots,|\pref_n(w)|_d))_n.
$
The  ``maximal degree of irrationality of the letters'' is then, by definition,
\[
\begin{array}{c}
\ds\Delta_w:=\max\big\{\dim\Span_\Q(f_1,\ldots,f_d) \,|\,(f_1,\ldots,f_d)\in F_w\big\}.
\end{array}
\]

	Our proof relies on linear algebra (notably, the rank--nullity theorem and the study of the dimension of some eigenspaces) applied to a particular rectangular matrix, which we call the ``flow matrix''. Such matrices are closely related to two classical and interconnected objects in combinatorics on words: \emph{Rauzy graphs} and \emph{extension graphs}. By contrast, Tijdeman's original proof is mostly combinatorial. \medskip
	
	Furthermore, the technique developed in this alternative proof enables us to establish the following new result, representing a step forward in the characterization  of $d$-ary words with minimal complexity in Tijdeman's sense.

	\begin{restatable}{theorem}{dendrictheorem}\label{th:dendric_chap3} \emph{(Andrieu, Cassaigne; also numbered Theorem \ref{th:dendric} in Chapter 2)\textbf{.}} Let $d\geq 1$. Every infinite $d$-ary word with rationally independent letter frequencies and complexity $n\mapsto (d-1)n+1$ is dendric.
	\end{restatable}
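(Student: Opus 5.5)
The plan is to use the Rauzy graph together with a ``flow'' viewpoint: the frequencies of factors satisfy Kirchhoff-type linear relations with integer coefficients, so they lie in the rational kernel of an integer matrix. Rank--nullity then bounds the $\Q$-dimension of the letter frequencies.

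Recall that $w$ is dendric if, for every factor $u$ (including the empty word), the bipartite extension graph $E(u)$ is a tree. Its left vertices are $L(u)=\{a : au\in\lan(w)\}$, its right vertices are $R(u)=\{b : ub\in\lan(w)\}$, and it has an edge $a$--$b$ whenever $aub\in\lan(w)$. Write $e(u)$ for the number of edges, $c(u)$ for the number of connected components, and $s(n)=p_w(n+1)-p_w(n)$.

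\textbf{Step 1: setup.} By Proposition \ref{prop:tijdeman_recurrent}, $w$ is recurrent, so every factor extends both to the left and to the right. For a non-bispecial $u$, $E(u)$ is then a star, hence a tree, so only bispecial factors matter. The classical counting identity gives
\[
s(n+1)-s(n)=\sum_{u\in\lan_n(w)}\bigl(e(u)-|L(u)|-|R(u)|+1\bigr).
\]
The left side vanishes because $s\equiv d-1$. Moreover, $e(u)-|L(u)|-|R(u)|+c(u)$ is the cycle rank of $E(u)$, hence $\geq 0$. Each summand therefore equals (cycle rank of $E(u)$) $-\,(c(u)-1)$. So it suffices to show $c(u)=1$ for every $u$: each summand then becomes a nonnegative cycle rank, the sum of these is zero, so every cycle rank is zero, and every $E(u)$ is a tree.

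\textbf{Step 2: flow space at level $n$.} Take a subsequence $(n_l)$ along which the frequencies of all factors of lengths $\le N$ converge; a diagonal extraction handles all $N$. Since $w$ has letter frequencies, this limit restricted to letters is the frequency vector $(f_1,\dots,f_d)$. The limiting vector $\mu=(\mu(v))_{v\in\lan_{n+1}(w)}$ is a flow on the Rauzy graph $\G_n$, whose vertices are $\lan_n(w)$ and whose edges are $\lan_{n+1}(w)$. That is, for every $x\in\lan_n(w)$,
\[
\sum_a\mu(ax)=\sum_b\mu(xb).
\]
This is an integer linear system. Since $w$ is recurrent, $\G_n$ is strongly connected, so its kernel has dimension $p(n+1)-p(n)+1=d$, and it has a rational basis. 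The letter frequencies are integer combinations of the entries of $\mu$. Hence $\dim\Span_\Q(f_1,\dots,f_d)\le d$, with equality by hypothesis; this is the mechanism behind Theorem \ref{th:Tij_ameliore_chap3}. The key point is that the $\Q$-span of the entries of $\mu$ must have dimension exactly $d$, so $\mu$ satisfies no rational relation beyond the Kirchhoff ones.

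\textbf{Step 3: disconnected extension graphs force an extra relation.} Suppose that $E(u)$ has a component with vertex sets $L_1\subseteq L(u)$ and $R_1\subseteq R(u)$, but is disconnected. Counting the length-$(n+2)$ flow $\mu(aub)$ over this component's edges gives
\[
\sum_{a\in L_1}\mu(au)=\sum_{b\in R_1}\mu(ub).
\]
This is an integer linear relation on the level-$(n+1)$ flow vector. If it is not a consequence of the Kirchhoff relations at vertex $u$ in $\G_n$, then it cuts the rational kernel down to dimension $\le d-1$. The entries of $\mu$ would then span a $\Q$-space of dimension $\le d-1$, and so would the letter frequencies, contradicting rational independence. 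Hence $c(u)=1$, and Step 1 finishes the proof. The empty word is handled the same way at $n=0$.

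\textbf{Main obstacle.} The delicate point is showing that the component relation is genuinely independent of the Kirchhoff relations. Equivalently, I must exhibit a rational flow on $\G_n$ that violates it. I would first try a cycle of $\G_n$ that enters $u$ via some $a\in L_1$ and leaves via some $b\in R(u)\setminus R_1$. Such a cycle exists by strong connectivity, since any in-edge can be followed by any out-edge in a cycle through $u$. Its indicator is an integer flow in the kernel. It puts weight one on the left side of the relation at $au$ and zero on the right side, provided the cycle passes through $u$ only once; choosing a shortest such cycle should ensure this. I also have to check that passing to a subsequence preserves the dimension count. Since the letter frequencies exist, every subsequence gives the same letter frequencies, so this should be harmless.
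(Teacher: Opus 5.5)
Your reduction to connectedness of the extension graphs and your level-$n$ dimension count are correct, and they follow the paper's architecture. In fact, viewed as the linear form $\phi(x)=\sum_{a\in L_1}x_{au}-\sum_{b\in R_1}x_{ub}$ on $\Q^{\lan_{n+1}(w)}$, your extra relation is, up to sign, the row of $u_1$ in the flow matrix $M'$ of the paper's split graph $\G'$. Consequently $\ker M'=\ker M\cap\ker\phi$.

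\textbf{The gap is exactly at the step you flagged, and your proposed fix does not work.} Write $L_2=L(u)\setminus L_1$ and $R_2=R(u)\setminus R_1$. Take a shortest closed walk in $\G_n$ that leaves $u$ into $R_2$ and returns from $L_1$. Minimality forbids an intermediate passage through $u$ that enters from $L_1$ or leaves into $R_2$, since such a walk would contain a shorter walk of the same kind. It does not forbid one intermediate passage entering from $L_2$ and leaving into $R_1$. That passage contributes $-1$, cancelling your $+1$, so $\phi$ vanishes on the walk.

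More fundamentally, strong connectivity of $\G_n$ is the only property your argument uses, and it cannot suffice. Suppose $\G_n$ were a figure eight: two circuits meeting only at $u$, one leaving into $R_1$ and returning from $L_1$, the other leaving into $R_2$ and returning from $L_2$. This graph is strongly connected, and its cycle space is spanned by the two circuits. Then $\phi$ would vanish on all of $\ker_\Q M$.

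\textbf{The missing input is the word itself.} Whenever a window sliding along $w$ passes through an occurrence of $u$ preceded by $a$ and followed by $b$, we have $aub\in\lan(w)$. So $a$ and $b$ lie in the same component of $\ext(u)$.

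The paper uses this to show that $\G'$ is semi-connected (Lemma~\ref{lemma:connectedness_Gprime}). Hence $\ker_\Q(M'^t)=\Span_\Q((1,\ldots,1)^t)$, and rank--nullity gives $\dim\ker_\Q M'=p_w(n+1)-p_w(n)=d-1$ directly, with no flow to exhibit.

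Your cycle approach can also be repaired with this fact, as follows.
\begin{itemize}
\item By recurrence, and since $L_1$ and $L_2$ are nonempty, two kinds of occurrences of $u$ both appear infinitely often: those whose surrounding pair $(a,b)$ lies in $L_1\times R_1$, and those whose pair lies in $L_2\times R_2$.
\item Hence some occurrence of the second kind is immediately followed, as the next occurrence of $u$, by one of the first kind.
\item The window path between these two occurrences leaves $u$ into $R_2$, returns from $L_1$, and meets $u$ nowhere in between.
\item Its edge-count vector therefore lies in $\ker_\Q M$ and takes the value $1$ under $\phi$.
\end{itemize}
For $n=0$ this amounts to finding a letter in $L_1\cap R_2$, which the same argument provides.
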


	The chapter is divided into four sections. In Section \ref{sect:Bigproof_preliminaries}, we  introduce the classical notion of Rauzy graph, and the new notion of flow matrix. In Section \ref{sect:Bigproof_core}, we  prove Theorem \ref{th:Tij_ameliore_chap3}. In Section \ref{sect:comparison_proof_tijdeman}, we compare our proof with that of Tijdeman. Finally, in Section \ref{sect:dendric_proof}, we recall the classical notions of extension graphs and dendricity, and use our proof of Theorem \ref{th:Tij_ameliore_chap3} to establish Theorem \ref{th:dendric_chap3}.

	\section{Preliminaries: definitions of Rauzy graphs and flow matrices}\label{sect:Bigproof_preliminaries}

	\subsection{Rauzy graphs}
	
	Rauzy graphs were introduced, among   other excellent ideas, by Rauzy in \cite{Rau82}. They are a well-studied and useful tool in combinatorics on words.
	By definition, the ``Rauzy graph'' of an infinite word $w$ for a given length $n \in \N$ is the directed graph $\G=(V,E)$ whose:
	\begin{itemize}
		\item vertices are the factors of length $n$ of $w$ ($V:=\lan_n(w)$);
		\item edges are the factors of length $n+1$ of $w$ ($E:=\lan_{n+1}(w)$). Specifically, the edge labeled by $v \in \lan_{n+1}$ joins the word formed by the first $n$ letters of $v$ to the word formed by its last $n$ letters. 
	\end{itemize}
	
	\begin{example}\label{ex:Rauzy_graph}
		The Rauzy graph of the eventually periodic word $$w=2(010)^\omega=2010010010010010...$$ for the length $n=1$ is:
		\begin{center}
			\begin{tikzpicture}
				\clip(-3.5,-1.5) rectangle (3.5,1.5);
				\begin{scope}[every node/.style={circle,thick,draw}]
					\node (A) at (0,0) {$0$};
					\node (B) at (3,0) {$1$};
					\node (C) at (-3,0) {$2$};
				\end{scope}
				
				\begin{scope}[>={Stealth[black]},
					every edge/.style={draw=black,very thick}]
					\path [->] (A) edge[in=180, out=-90, looseness=12] (A);
					\path [->] (A) edge[bend right=60] (B);
					\path [->] (B) edge[bend left=-60] (A);
					\path [->] (C) edge[bend right=-60] (A);
				\end{scope}
				
				\draw (-1.3,-1.2) node {$00$};
				\draw (1.5,-1.2) node {$01$};
				\draw (1.5,1.2) node {$10$};
				\draw (-1.5,1.2) node {$20$};
			\end{tikzpicture}
		\end{center}
		Indeed, one readily checks that $\lan_1(w)=\{0,1,2\}$ and $\lan_{2}(w)=\{00,01,10,20\}$.
	\end{example}

	\begin{remark}\label{rk:semi_connected} By definition, a directed graph is ``semi-connected'' if, for every pair of vertices $u$ and $u'$, there exists a path from $u$ to $u'$ or a path from $u'$ to $u$. The Rauzy graph of every infinite word is semi-connected. Indeed, when sliding a window of size $n$ from left to right along $w$, we follow a path along its length-$n$ Rauzy graph that visits every vertex. 
	\end{remark}

	We encourage the reader who is unfamiliar with Rauzy graphs to consider the following two exercises.
	
	\begin{exercise}
		By definition, a directed graph is ``strongly connected'' if  for every pair of vertices $u$ and $u'$, there exists a path going from $u$ to $u'$. \\
		1) Are all Rauzy graphs of all infinite words always strongly connected? \\
		2) Prove that all the Rauzy graphs of a given infinite word $w$ are strongly connected if and only if $w$ is recurrent. \\ 
		3) How can one interpret the \emph{uniform} recurrence of an infinite word $w$ in terms of its Rauzy graphs?
	\end{exercise}

	\begin{exercise} \textcolor{white}{.}\\
		1) Draw the Rauzy graph of the Fibonacci word for the length $4$. (We recall that the Fibonacci word was defined in Chapter 1, Exercise \ref{ex:def_fibo}.) \\
		2) How can one identify the right- and left-special factors of length $4$ on this Rauzy graph? \\
		3) Let $n\in \N$ and $w$ be an arbitrary Sturmian word. Describe the shape of the Rauzy graph of $w$ for length $n$. 
	\end{exercise}

	\subsection{Flow matrices}

	Let $w$ be an infinite word, and $n$ a nonnegative integer.
	By definition, the ``flow matrix'' of $w$ for the length $n$ is  the rectangular matrix $M$, indexed by $u \in \lan_n(w)$ and $ v\in \lan_{n+1}(w)$, and defined by:
	
	\[M_{u,v}:=\begin{cases} 
		1 & \text{ if $v$ starts but does not end with $u$,}\\
		-1 & \text{ if $v$ ends but does not start with $u$,}\\
		0 & \text{ if $v$ neither starts nor ends with $u$, or starts and ends with $u$.}
	\end{cases}
	\]

	Its size is $p_w(n)\times p_w(n+1)$, which is also equal, with the notation of the Rauzy graphs, to $\vert V\vert \times \vert E \vert$. To represent it explicitly, we fix an arbitrary order on the sets $\lan_{n}(w)$ and $\lan_{n+1}(w)$. In this course, we  always choose the lexicographic order. 
	
	\begin{example}\label{ex:flow_matrix}
		Pursuing Example \ref{ex:Rauzy_graph}, the flow matrix, for the length $n=1$, of the eventually periodic word $w=2(010)^\omega=2010010010010010...$ is:
		\[M = \begin{pmatrix}
			0 & 1 & -1 & -1\\
			0 & -1 & 1 & 0\\
			0 & 0 & 0 & 1
		\end{pmatrix}\]
		It is easy to derive the flow matrix from the knowledge of the same-parameter Rauzy graph: on the row indexed by a vertex $u \in \lan_n(w)$, each incoming edge contributes  $-1$, and each outgoing edge contributes $+1$.
	\end{example}

	The name \emph{flow matrix} is inspired by Kirchhoff's junction rule for hydraulic and electric circuits. The flow matrix is different from the classical \emph{adjacency matrix} in graph theory, which is always a square matrix.
	
	\begin{remark}\label{rk:extension_matrix}
		The flow matrix is to be understood as the difference between what could be called the ``right- and left-extension matrices'' of $w$ for the length $n$: $M = R- L$, where 
		\[
		R_{u,v}:=\begin{cases} 1 & \text{ if $v$ starts with $u$,}\\
			0 & \text{ otherwise,}
		\end{cases}
		\]
		is the matrix representing the right-extensions of the length-$n$ factors of $w$ (indeed, the right-extensions of $u\in\lan_n(w)$ are precisely the factors $v\in\lan_{n+1}(w)$ such that $R_{u,v}=1$), and where
		\[
		L_{u,v}:=\begin{cases} 1 & \text{ if $v$ ends with $u$,}\\
			0 & \text{ otherwise,}
		\end{cases}
		\]
		is, similarly, the matrix representing the left-extensions of the length-$n$ factors of $w$.
		\medskip
		
		In particular, it follows from their definitions that:\\
		- Every column of $R$ and of $L$ contains exactly one entry equal to $1$.\\
		- Every row of $R$ contains at least one entry equal to $1$. If $w$ is recurrent, every row of $L$ also contains at least one entry equal to $1$. By contrast, if $w$ is not recurrent, for $n$ sufficiently large, $L$ contains exactly one row full of zeros (as in Example \ref{ex:flow_matrix}). \\
		- A factor $u \in \lan(w)$ is right-special (resp. left-special) if and only if the row indexed by $u$ in the matrix $R$ (resp. $L$) contains several entries equal to $1$. 
	\end{remark}
	
	We are now ready to prove Theorem \ref{th:Tij_ameliore_chap3}.

	
	\section{Proof of Theorem \ref{th:Tij_ameliore_chap3}} \label{sect:Bigproof_core}
	
	\subsection[General structure of the proof of Theorem \ref{th:Tij_ameliore_chap3}]{General structure of the proof}
	
	Let $d\geq 1$. Let $w$ be an infinite $d$-ary word (we recall that this implies $p_w(1)=d$). We  proceed by contradiction and minimality.
	By contradiction, assume that there exists a length $n\geq 1$ such that 
	\begin{equation}\label{eq:ineg_contraposition} p_w(n)< (\Delta_w-1)(n-1)+d,\end{equation}
	where $\Delta_w$ denotes the maximal degree of irrationality of the letters in $w$.
	(Note that for the inequality to hold, we must have $n \geq 2$.)
	Let $m+1$ be the minimal length $n$ for which the inequality \eqref{eq:ineg_contraposition} holds. By minimality of $m+1$, we  have the twofold estimates
	\begin{equation}\label{eq:def_parameter_m}\begin{cases} p_w(m)\geq (\Delta_w-1)(m-1)+d, \\   p_w(m+1)< (\Delta_w-1)m+d \end{cases}\end{equation}
	from which we deduce that
	\begin{equation}\label{eq:jump}
		p_w(m+1)-p_w(m)\leq \Delta_w-2.
	\end{equation}
	
	\bigskip
	By definition of $\Delta_w$, there exists a tuple of real numbers $(f_1,\ldots,f_d)\in [0,1]^d$ (called \emph{pseudo-frequencies of letters}) such that $\dim \Span_{\Q} (f_1, \ldots, f_d) = \Delta_w$, and
	\begin{equation}\label{eq:def_freq_letters}
		\Big(\frac{1}{n_l}(|\pref_{n_l}(w)|_1,\ldots,|\pref_{n_l}(w)|_d)\Big)_l \longrightarrow_{l\rightarrow \infty} (f_1,\ldots,f_d),
	\end{equation}
	for a certain increasing sequence of integers $(n_l)_l$. Up to extracting a subsequence from the sequence $(n_l)_l$, we can also assume that every factor $v$ of length $m+1$ (for the parameter $m$ in the estimates \eqref{eq:def_parameter_m}) admits a pseudo-frequency $f_v$, that is
	\begin{equation}\label{eq:factors_freq} \Big(\frac{|\pref_{n_l}(w)|_v}{n_l} \Big)_{l} \longrightarrow_{l \rightarrow \infty} f_v. \end{equation}

	The goal of the proof is to show that the pseudo-frequencies of length-$m+1$ factors span a rational linear space of dimension at most $\Delta_w-1$, and deduce that, in turn, the pseudo-frequencies of the letters $(f_1,\ldots,f_d)$ also span a rational linear space of dimension at most $\Delta_w-1$. This will contradict the choice made in \eqref{eq:def_freq_letters} for the family of pseudo-frequencies $(f_1, \ldots, f_d)$.
	\medskip
	
	Denote by $M$ the flow matrix of the infinite word $w$ for the specific length $m$. The core of the proof consists in studying the kernel of  $M$, viewed first as a matrix with rational coefficients and then as a matrix with real coefficients. It is divided into the  following three lemmas, which are proved in the next three subsections.

	\begin{lemma}\label{lemma:eigenvector}
		Let $M$ be the flow matrix of the infinite word $w$ for the length $m$. Then the vector of pseudo-frequencies of the length-$m+1$ factors satisfies
		\[ (f_v)_{v \in \lan_{m+1}(w)} \in \ker_{\R}(M).\]
	\end{lemma}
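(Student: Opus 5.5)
We need to show that for every $u\in\lan_m(w)$, $\sum_v M_{u,v} f_v = 0$. By Remark \ref{rk:extension_matrix}, $M=R-L$, so the $u$-th coordinate of $M(f_v)_v$ is
\[
\sum_{v\in\lan_{m+1}(w)} R_{u,v}f_v-\sum_{v\in\lan_{m+1}(w)} L_{u,v}f_v=\sum_{a:\,ua\in\lan(w)} f_{ua}-\sum_{a:\,au\in\lan(w)} f_{au}.
\]
This is a valid rewriting because a factor $v$ that both starts and ends with $u$ contributes $1-1=0$, matching the definition of $M_{u,v}$. The plan is therefore to show that both sums equal the pseudo-frequency of $u$ along the same subsequence $(n_l)_l$. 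This amounts to Kirchhoff's junction rule: the flow entering $u$ in the Rauzy graph equals the flow leaving it.

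Concretely, I will work at the level of prefixes. For each $N$, every occurrence of $u$ in $\pref_N(w)$ starting at a position $i\le N-m-1$ (counting from $0$) extends uniquely to an occurrence of some $ua$ inside $\pref_N(w)$. At most one occurrence of $u$, the one ending exactly at position $N-1$, fails to extend. Hence
\[
\Big|\,|\pref_N(w)|_u-\sum_a |\pref_N(w)|_{ua}\Big|\le 1.
\]
Symmetrically, every occurrence of $u$ starting at a position $i\ge 1$ extends uniquely to the left to an occurrence of some $au$ inside $\pref_N(w)$. Only an occurrence at position $0$ can fail, so $\big|\,|\pref_N(w)|_u-\sum_a |\pref_N(w)|_{au}\big|\le 1$. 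The sums range over all letters $a$, but only those with $ua$ or $au$ in $\lan_{m+1}(w)$ contribute nonzero counts.

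Subtracting the two estimates gives $\big|\sum_a|\pref_N(w)|_{ua}-\sum_a|\pref_N(w)|_{au}\big|\le 2$. I then set $N=n_l$, divide by $n_l$, and let $l\to\infty$. Each term converges by \eqref{eq:factors_freq}, and the sums are finite, so the limit is $\sum_a f_{ua}-\sum_a f_{au}=0$. This holds for every $u$, which proves the lemma.

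The only step that needs care is the boundary bookkeeping in the prefix counting: making sure each occurrence is counted exactly once, and that the two off-by-one corrections are bounded independently of $N$. Once that is done, the vanishing after division by $n_l$ is immediate. Note that we never need the pseudo-frequency of $u$ itself to exist along $(n_l)_l$, since it cancels before taking limits.
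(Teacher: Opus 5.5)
Your proof is correct and follows essentially the same route as the paper's. The paper likewise counts occurrences of $u$ in $\pref_{n_l}(w)$ through their right and left extensions, with boundary corrections of at most $1$ (an occurrence at the very start of $w$, or one ending at the end of the prefix), and passes to the limit to get the junction rule $\sum_{v \text{ starts with } u} f_v=\sum_{v \text{ ends with } u} f_v$; the only difference is that the paper first records the existence of $f_u$ and the double equality with $f_u$, whereas you subtract the two estimates before taking limits, which is equally valid.
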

	
	\begin{lemma}\label{lemma:kerM} Let $M$ be the flow matrix of the infinite word $w$ for the length $m$.
		Then the dimension of the kernel of $M$ satisfies
		\[\dim \ker_{\Q}(M)\leq \Delta_w-1. \]
	\end{lemma}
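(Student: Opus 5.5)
The plan is to compute $\dim\ker_{\Q}(M)$ exactly with the rank--nullity theorem, and then compare the result with the jump estimate \eqref{eq:jump}. The matrix $M$ has $p_w(m+1)$ columns, one for each $v\in\lan_{m+1}(w)$. Rank--nullity over $\Q$ therefore gives
\[
\dim\ker_{\Q}(M)=p_w(m+1)-\rank(M).
\]
So it suffices to show that $\rank(M)=p_w(m)-1$. Indeed, \eqref{eq:jump} would then give $\dim\ker_{\Q}(M)=p_w(m+1)-p_w(m)+1\leq \Delta_w-1$.

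To compute the rank, we read $M$ through the Rauzy graph $\G=(V,E)$ of $w$ for the length $m$, as in Example~\ref{ex:flow_matrix}. Take a column indexed by an edge $v\in E$ going from $u$ to $u'$. If $u\neq u'$, this column is the vector $e_u-e_{u'}$. If $u=u'$ (a loop), the column is zero. Thus $M$ is exactly the vertex--edge incidence matrix of the directed graph $\G$, with loops contributing zero columns.

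Rather than the column space, we study the left kernel $\{x\in\Q^{V}\,:\,x^{T}M=0\}$, whose dimension is $p_w(m)-\rank(M)$. The condition $x^{T}M=0$ says that $x_u=x_{u'}$ for every non-loop edge from $u$ to $u'$. Hence $x$ is constant on each connected component of the underlying undirected graph of $\G$. Conversely, any vector that is constant on components lies in the left kernel. So the left kernel has dimension equal to the number of weakly connected components of $\G$.

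By Remark~\ref{rk:semi_connected}, $\G$ is semi-connected: the path traced by sliding a length-$m$ window along $w$ visits every vertex. In particular $\G$ is weakly connected, so the left kernel is the line spanned by $(1,\ldots,1)$. Therefore $\rank(M)=p_w(m)-1$, and the lemma follows from the rank--nullity computation above.

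The argument is short, and the only point requiring care is the rank computation. There are two things to check there. First, loop columns are zero, so they do not affect the left-kernel description. Second, the row-vector argument has to be applied over $\Q$, since the lemma concerns $\ker_{\Q}$. This causes no difficulty, because $M$ has integer entries and the connectivity argument is field-independent.
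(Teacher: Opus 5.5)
Your proof is correct and follows essentially the same route as the paper: rank--nullity applied to $M$ and $M^t$, together with $\ker_{\Q}(M^t)=\Span_{\Q}((1,\ldots,1)^t)$ because the Rauzy graph is semi-connected (hence weakly connected). This gives $\dim\ker_{\Q}(M)=p_w(m+1)-p_w(m)+1\leq\Delta_w-1$ via \eqref{eq:jump}; the only difference is cosmetic, since you count weakly connected components of the incidence matrix, whereas the paper propagates the equalities $x_{u_{i-1}}=x_{u_i}$ along an explicit directed path between two vertices.
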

	
	We recall that $\ker_{\R}(M)$ and $\ker_{\Q}(M)$ denote the sets of all vectors $x \in \R^{\lan_{m+1}(w)}$ and $x \in \Q^{\lan_{m+1}(w)}$, respectively, such that $Mx = 0$. They form linear subspaces of $\R^{\lan_{m+1}(w)}$ and $\Q^{\lan_{m+1}(w)}$, respectively. 
	
	\begin{lemma}\label{lemma:dimQ}
		The rational linear space spanned by the pseudo-frequencies of letters satisfies	
		\[\dim \Span_{\Q}(f_1, \ldots, f_d)\leq \Delta_w-1. \]
	\end{lemma}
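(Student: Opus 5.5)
The plan is to combine Lemma~\ref{lemma:eigenvector} and Lemma~\ref{lemma:kerM}. The bridge between them is the fact that the flow matrix $M$ has integer entries. Then I transfer the resulting bound from the pseudo-frequencies of length-$(m+1)$ factors down to the pseudo-frequencies of letters.

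\emph{Step 1: the real kernel is spanned by rational vectors.} Since $M$ has entries in $\{-1,0,1\}\subseteq\Q$, its rank can be computed by Gaussian elimination using only field operations in $\Q$. Hence $\rank_\Q(M)=\rank_\R(M)$. By the rank--nullity theorem, $\dim_\R\ker_\R(M)=\dim_\Q\ker_\Q(M)=:k$. Let $x^{(1)},\ldots,x^{(k)}\in\Q^{\lan_{m+1}(w)}$ be a basis of $\ker_\Q(M)$. These vectors remain linearly independent over $\R$, because independence is a rank condition and rank does not change when passing from $\Q$ to $\R$. They lie in $\ker_\R(M)$, so they form a basis of it. By Lemma~\ref{lemma:kerM}, $k\leq\Delta_w-1$.

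\emph{Step 2: the factor pseudo-frequencies live in a small $\Q$-space.} By Lemma~\ref{lemma:eigenvector}, $(f_v)_v\in\ker_\R(M)$. So there are real numbers $\lambda_1,\ldots,\lambda_k$ with $(f_v)_v=\sum_j\lambda_j x^{(j)}$. Each coordinate $f_v=\sum_j\lambda_j x^{(j)}_v$ is then a rational combination of the $\lambda_j$. Hence $f_v\in\Span_\Q(\lambda_1,\ldots,\lambda_k)$ for every $v\in\lan_{m+1}(w)$.

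\emph{Step 3: passing from factors to letters.} For each letter $i$, every occurrence of $i$ at a position $p\leq n_l-m$ of $\pref_{n_l}(w)$ is the first letter of exactly one occurrence of a factor $v\in\lan_{m+1}(w)$ lying inside $\pref_{n_l}(w)$. Hence
\[
\Big|\,|\pref_{n_l}(w)|_i-\sum_{v\in\lan_{m+1}(w),\ v\text{ starts with } i}|\pref_{n_l}(w)|_v\Big|\leq m .
\]
Dividing by $n_l$ and letting $l\to\infty$, using \eqref{eq:def_freq_letters} and \eqref{eq:factors_freq}, gives $f_i=\sum_{v \text{ starts with } i} f_v$. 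Therefore $f_i\in\Span_\Q(\lambda_1,\ldots,\lambda_k)$. We conclude that $\dim\Span_\Q(f_1,\ldots,f_d)\leq k\leq\Delta_w-1$.

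The only step that needs genuine care is Step~1: the equality of the real and rational kernel dimensions, and the fact that a rational basis remains a real basis. This is the place where the rationality of $M$ is used. It is a standard consequence of rank invariance under field extension, so I do not expect a real obstacle. The counting in Step~3 is routine, since the boundary error is bounded by $m$.
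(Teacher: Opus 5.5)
Your proposal is correct and follows essentially the same route as the paper. The paper likewise takes a rational basis of $\ker_\Q(M)$, argues it is also a basis of $\ker_\R(M)$ (Lemma~\ref{lemma:linear_algebra}), expands the factor pseudo-frequency vector from Lemma~\ref{lemma:eigenvector} in that basis, and then writes $f_j=\sum_{v\text{ starts with }j}f_v$ to place the letter pseudo-frequencies in $\Span_\Q(\lambda_1,\ldots,\lambda_\delta)$. Your only departures are cosmetic: you justify real independence by rank invariance rather than by completing to a basis, and you make the boundary error ($\leq m$) in the letter-counting step explicit.
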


	If Lemma \ref{lemma:dimQ} holds, then Theorem \ref{th:Tij_ameliore_chap3} follows. Indeed, we have simultaneously $\dim \Span_{\Q}(f_1, \ldots, f_d)= \Delta_w$ (by construction of the pseudo-frequencies of letters $(f_1,\ldots,f_d)$), and $\dim \Span_{\Q}(f_1, \ldots, f_d)< \Delta_w$. A contradiction. 
	
	\subsection{Proof of Lemma \ref{lemma:eigenvector}}

	Let $u \in \lan_m(w)$ and $l \in \N$. 
	Since every occurrence of $u$ in the infinite word $w \in \{1,\dots,d\}^{\N}$, except possibly the first one when $w$ begins with $u$, is preceded by a letter, thus forming a factor $v$ of length $m+1$, we have the general  estimate:
	\begin{equation}\label{eq:pattern_counting_estimate}
	|\pref_{n_l}(w)|_u= \sum_{\substack{v \in \lan_{m+1}(w) \\ \text{ s.t. $v$ ends with $u$}}} |\pref_{n_l}(w)|_v \quad+\quad \begin{cases}
	1 & \text{if $w$ starts with $u$,}\\
	0 & \text{otherwise.}
	\end{cases}
	\end{equation}
	With a similar argument, we obtain the second estimate:
		\begin{equation}\label{eq:pattern_counting_estimate2}
	|\pref_{n_l}(w)|_u= \sum_{\substack{v \in \lan_{m+1}(w) \\ \text{ s.t. $v$ starts with $u$}}} |\pref_{n_l}(w)|_v \quad+\quad \begin{cases}
	1 & \text{if $\pref_{n_l}(w)$ ends with $u$,}\\
	0 & \text{otherwise.}
	\end{cases}
	\end{equation}
	From these estimates and from the definition of the pseudo-frequencies of the length-$m+1$ factors of $w$ \eqref{eq:factors_freq}, we deduce that every factor $u$ of length $m$ also admits a pseudo-frequency $f_u$:
	\[ \Big(\frac{|\pref_{n_l}(w)|_u}{n_l} \Big)_{l} \longrightarrow_{l \rightarrow \infty} f_u,\]
	which satisfies the following conservation law:
	\begin{equation}\label{eq:double_conservation_law} f_u =\sum_{v \in \lan_{m+1}(w) \text{ s.t. $v$ starts with $u$}} f_v = \sum_{v \in \lan_{m+1}(w) \text{ s.t. $v$ ends with $u$}} f_v.\end{equation}

	\begin{remark} In an electric circuit shaped as the Rauzy graph of $w$ for the length $m$, and in which the intensity of the electric current on the branch indexed by $v \in \lan_{m+1}(w)$ is $f_v$, this conservation law is exactly the Kirchhoff's junction rule. \end{remark}

	The second equality in \eqref{eq:double_conservation_law} is trivially equivalent to 
	\[ \sum_{v \in \lan_{m+1}(w) \text{ s.t. $v$ starts with $u$}} f_v - \sum_{v \in \lan_{m+1}(w) \text{ s.t. $v$ ends with $u$}} f_v = 0\]
	and is valid for every $u \in \lan_m(w)$.
	Therefore, by definition of the flow matrix $M$, we have
	\[ M\cdot(f_v)_{v\in \lan_{m+1}(w)} = 0.\]
	Since the vector $(f_v)_{v\in \lan_{m+1}(w)}$ has real entries a priori, we conclude that 
	\[(f_v)_{v\in \lan_{m+1}(w)} \in \ker_{\R}(M).\] 
	Lemma \ref{lemma:eigenvector} is proven.

	\subsection{Proof of Lemma \ref{lemma:kerM}}
	
	Let $M$ be the flow matrix of the infinite word $w$ for the length $m$. We recall that the flow matrix has entries in $\{-1,0,1\}\subseteq \Q$, and has size $p_w(m)\times p_w(m+1)$.

	\medskip
	
	By the rank--nullity theorem applied to $M$ and its transpose, we have
\[
	\begin{cases}\dim\ker_{\Q}(M)=p_{m+1}(w) -\rank(M), \\  \rank(M)=\rank(M^t)=p_m(w)-\dim\ker_{\Q}(M^t); \end{cases}
\]
	from which we derive, using the estimate \eqref{eq:jump}
	\begin{equation}\label{eq:dimensionsMand_transpose}
	\begin{array}{rcl}
		\dim\ker_{\Q}(M)
		&=& p_w(m+1)-p_w(m)+\dim\ker_{\Q}(M^t)\\
		&\leq& \Delta_w-2+\dim\ker_{\Q}(M^t).
	\end{array}
	\end{equation}
	We are now going to prove that $\dim\ker_{\Q}(M^t)=1$, or more precisely, that $\ker_{\Q}(M^t) = \Span_{\Q}((1,\ldots,1)^t)$. Once this is proven, the inequality above becomes
	\[\dim\ker_{\Q}(M) \leq \Delta_w -1,\]
	and establishes Lemma \ref{lemma:kerM}.
	
	\medskip
	
	First, by construction, the sum of the entries in any column of $M$ equals $0$ (it can also be seen as an immediate consequence of Remark \ref{rk:extension_matrix}); therefore, $(1,\ldots,1)^t\in\ker_{\Q}(M^t)$. Conversely, we want to prove that if $x\in\ker_{\Q}(M^t)$, then all the coordinates of $x=(x_u)_{u \in \lan_{m}(w)}$ are equal. (In other words, we want to prove that $\ker_{\Q}(M^t)\subseteq \Span_{\Q}\{(1,\ldots,1)^t\}$.) Let $u \neq u' \in \lan_{m}(w)$. Since the Rauzy graph $\G$ of $w$ for the length $m$ is semi-connected (as is the case of all Rauzy graphs of infinite words, see Remark \ref{rk:semi_connected}), there exists in $\G$ a path from $u$ to $u'$, or from $u'$ to $u$. Say the path is from $u$ to $u'$. Denote by $v_1, \ldots, v_l$ the list of the consecutive edges that lead from $u$ to $u'$, and by $u_0, \ldots, u_{l}$ the list of vertices successively visited, including the endpoints. The path can be represented as follows:
	\begin{equation}\label{eq:path}u = u_0 \underset{v_1}{\to}u_1 \underset{v_2}{\to} u_2 \underset{v_3}{\to} \cdots \to u_{l-1} \underset{v_l}{\to} u_l = u'.\end{equation}	Since $u'\neq u$, we may assume, after possibly shortening the path, that the vertices  $u_0,\ldots,u_{l} $ are pairwise distinct.

	Now, by definition of a Rauzy graph, for every $i \in \{1,\ldots,l\}$, the factor $v_i$ starts with $u_{i-1}$ and ends with $u_{i}$. Therefore,  one finds exactly two nonzero entries in the row of $M^t$ indexed by the factor $v_i$:  an entry ``$+1$'' at position $u_{i}$, and an entry ``$-1$'' at position $u_{i-1}$, which are distinct. Finally, since $x\in\ker_{\Q}(M^t)$, we have $x_{u_i}-x_{u_{i-1}}=0$.
	The equality $x_{u_{i-1}} = x_{u_i}$ being true for every $i \in \{1,\ldots,l\}$, it propagates along the path \eqref{eq:path}, and we obtain the desired equality $x_u = x_{u'}$. We thus proved that all the coordinates of $x \in \ker_{\Q}(M^t)$ are equal, from which we deduce that $\ker_{\Q}(M^t)$ is contained in, and in fact equal to, $\Span_{\Q}((1,\ldots,1)^t)$. Therefore, $\dim \ker_{\Q}(M^t) = 1$, and the equality \eqref{eq:dimensionsMand_transpose} becomes
	\[	\dim\ker_{\Q}(M) \leq \Delta_w-1.\] Lemma \ref{lemma:kerM} is proven.
	
	\subsection{Proof of Lemma \ref{lemma:dimQ}}
	
	\emph{For this proof, we advise the reader to keep track of the class (scalar, vector with rational or real entries, etc.) of each object.}
	
	\bigskip

	Denote $\delta=\dim\ker_{\Q}(M)$ (by Lemma \ref{lemma:kerM} we already know that $\delta \leq \Delta_w -1$). Let $b_1,\ldots,b_\delta\in\Q^{\lan_{m+1}(w)}$ be a basis of $\ker_{\Q}(M)$, that is, a basis of the kernel of $M$ viewed as a matrix with \emph{rational entries}.
	The key point of the proof is the following general result from linear algebra: since $\Q \subseteq \R$, the family of vectors $b_1,\ldots,b_\delta$ also forms a basis of $\ker_{\R}(M)$, that is, of $\ker(M)$ viewed as a matrix with \emph{real entries.} (For completeness, we recall this general result in the next subsection.) Therefore, since, by Lemma \ref{lemma:eigenvector}, the pseudo-frequency vector $f:=(f_{v})_{v \in \lan_{m+1}(w)}$ belongs to $\ker_\R(M)$, there exist real numbers $\lambda_1,\ldots,\lambda_\delta\in\R$ such that \[f=\sum_{i=1}^{\delta}\lambda_i b_i.\]
	 This vector equality is equivalent to the following $p_{m+1}(w)$ scalar equalities: for every $v \in \lan_{m+1}(w)$, we have
	\[f_{v}=\sum\limits_{i=1}^{\delta}\lambda_i b_{i,v}\]
	where $b_{i,v}$ denotes the coordinate of the vector $b_i$ indexed by $v$.
	In this equality, we have $f_v \in \R$, $\lambda_i \in \R$, and $b_{i,v} \in \Q$. Therefore, the real number
	$f_v$ belongs to the rational linear space $\Span_\Q(\lambda_1,\ldots,\lambda_\delta)$; and since the $\lambda_i$ do not depend on the choice of $v\in\lan_{m+1}(w)$, it follows that 
	\[\Span_\Q((f_v)_{v\in \lan_{m+1}(w)}) \subseteq\Span_\Q(\lambda_1,\ldots,\lambda_\delta).\]

	We now come back to the pseudo-frequencies $f_1,\ldots,f_d$ of the \emph{letters} in $w$. With a counting argument similar to \eqref{eq:pattern_counting_estimate}, it is easy to establish that
	\[ f_j =\sum_{v \in \lan_{m+1}(w) \text{ s.t. $v$ starts with $j$}} f_v\; ,\]
	for every $j \in \{1,\ldots,d\}$; from which we deduce that
	\[f_j \in \Span_\Q((f_v)_{v\in \lan_{m+1}(w)}) \]
	(still for every $j \in \{1,\ldots,d\}$), hence
	\[\Span_\Q(f_1,\ldots,f_d)\subseteq \Span_\Q((f_v)_{v\in \lan_{m+1}(w)}) \subseteq\Span_\Q(\lambda_1,\ldots,\lambda_\delta).\] 
	The above inclusions of linear spaces imply the inequality
\[\dim \Span_\Q(f_1,\ldots,f_d)\leq \dim \Span_\Q(\lambda_1,\ldots,\lambda_\delta)\leq \delta.\]
	Since $\delta =\dim\ker_{\Q}(M)\leq \Delta_w-1$ by Lemma \ref{lemma:kerM}, Lemma \ref{lemma:dimQ} is proven. The proof of Theorem \ref{th:Tij_ameliore_chap3} is complete.
	
	\subsection{Annex: A brief reminder of linear algebra}\label{ssect:reminder_linear_algebra}
	
	For the reader's convenience, we prove the following general lemma, which played a crucial role in the proof of Lemma \ref{lemma:dimQ}. 
	
	\begin{lemma}\label{lemma:linear_algebra}
		\emph{(i)} If the vectors $e_1,\ldots,e_d  \in \Q^n$ are $\Q$-linearly independent, then they are also $\R$-linearly independent.\\
		\emph{(ii)} Let $M\in\mathcal{M}_{m,n}(\Q)$. If the vectors $e_1,\ldots,e_d \in \Q^n$ form a basis of $\ker_\Q(M)$, then they also form a basis of $\ker_\R(M)$.
	\end{lemma}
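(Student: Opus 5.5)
For (i), I will argue by reduction to a nonvanishing minor. Put the vectors $e_1,\ldots,e_d$ as the columns of a matrix $E\in\mathcal{M}_{n,d}(\Q)$. $\Q$-linear independence means $E$ has rank $d$ over $\Q$. Rank is characterized by minors, so some $d\times d$ minor of $E$ is nonzero. A determinant is a polynomial expression in the entries and is computed identically in $\Q$ and in $\R$. That same minor is therefore a nonzero real number, so the rank of $E$ over $\R$ is also $d$, i.e.\ the $e_i$ are $\R$-linearly independent.

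An alternative, avoiding determinants, is to suppose a real relation $\sum\lambda_i e_i=0$ and choose a $\Q$-basis $(\mu_j)$ of $\Span_\Q(\lambda_1,\ldots,\lambda_d)$. Writing $\lambda_i=\sum_j q_{ij}\mu_j$ with $q_{ij}\in\Q$, each coordinate reads $\sum_j\big(\sum_i q_{ij}e_{i,k}\big)\mu_j=0$ with rational coefficients. The $\Q$-independence of the $\mu_j$ then forces $\sum_i q_{ij}e_i=0$ for all $j$, hence $q_{ij}=0$ and $\lambda=0$. I will present the minor argument as the main line, since it is shorter.

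For (ii), by (i) the vectors $e_1,\ldots,e_d$ are $\R$-independent, and they clearly lie in $\ker_\R(M)$ since $Me_i=0$. It remains to show $\dim\ker_\R(M)\leq d$. I will use rank--nullity on both sides: $\dim_\Q\ker_\Q(M)=n-\rank_\Q(M)$ and $\dim_\R\ker_\R(M)=n-\rank_\R(M)$. The rank of $M$ is the largest size of a nonzero minor, and minors of a rational matrix are the same numbers whether viewed in $\Q$ or $\R$, so $\rank_\Q(M)=\rank_\R(M)$. Hence $\dim\ker_\R(M)=d$, and $d$ independent vectors in a $d$-dimensional space form a basis. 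An equivalent route is to observe that Gaussian elimination on $M$ only uses rational operations, so the reduced row echelon form, and hence the pivot structure and kernel dimension, is the same over both fields.

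The only genuinely non-formal point is the field-independence of rank. Everything reduces to it, and I will justify it carefully via the determinantal characterization of rank.
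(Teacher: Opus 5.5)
Your proof is correct. Part (ii) follows the paper's argument: you combine $\R$-independence from (i) with rank--nullity over both fields and the field-independence of $\rank(M)$. The paper justifies that last point by Gaussian elimination, which you also mention.

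Part (i) takes a different route. The paper avoids determinants. It first treats $d=n$: a $\Q$-basis of $\Q^n$ generates the canonical basis by rational combinations, so it spans $\R^n$ and is therefore an $\R$-basis. It then reduces the general case $d\le n$ by completing $e_1,\ldots,e_d$ to a $\Q$-basis of $\Q^n$.

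Your nonvanishing-minor argument instead makes (i) a special case of the rank-invariance fact you need anyway in (ii), applied to the matrix with columns $e_i$. The whole lemma then rests on one principle: determinants are polynomials in the entries, hence independent of the ambient field. The price is the determinantal characterization of rank, which is heavier than the basis completion and dimension counting the paper uses.

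Your alternative argument for (i) needs no rank theory at all. It decomposes the real coefficients over a $\Q$-basis of $\Span_\Q(\lambda_1,\ldots,\lambda_d)$, which is exactly the device the paper uses in the proof of Lemma~\ref{lemma:dimQ}.
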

	
	\begin{proof}
		(i) First, suppose $d=n$. In this case, the $\Q$-linearly independent vectors $e_1,\ldots,e_n$ form a basis of $\Q^n$ and therefore generate the canonical basis of  $\Q^n$ through rational linear combinations. Since the canonical basis of $\Q^n$ coincides with the canonical basis of $\R^n$, it follows that every vector in $\R^n$ can be expressed as a linear combination with real coefficients of the vectors
 $e_1,\ldots,e_n$. Hence, $e_1,\ldots,e_n$ form a basis of $\R^n$.
		
		Now, in the general case $d\leq n$, we complete the set $\{e_1, \ldots, e_d\}$ of $\Q$-linearly independent vectors into a basis $\{e_1, \ldots, e_n\}$ of $\Q^n$. By the argument above, this extended set forms a basis of $\R^n$, from which it follows  that the vectors $e_1,\ldots,e_d$ are $\R$-linearly independent.
		\medskip
		
		(ii) By the previous point, the family $(e_1,\ldots,e_d)$  is $\R$-linearly independent. To show that it forms a basis of $\ker_{\R}(M)$, it suffices to verify that $\dim \ker_{\R}(M) = d$. Applying the rank--nullity theorem gives
		\[\dim \ker_{\R}(M) = n - \rank(M); \] 
		and the rank of $M$ is the same whether $M$ is considered over $\Q$ or $\R$. (This can be verified by observing that Gaussian elimination is unaffected by the ambient field.) Hence, applying rank--nullity  over $\Q$ yields
		\[\dim \ker_{\Q}(M) = n - \rank(M), \] 
		and therefore
		$\dim \ker_{\R}(M) = \dim \ker_{\Q}(M) = d$.
		This proves that $(e_1,\ldots,e_d)$ forms a basis of $\ker_{\R}(M)$, completing the proof of Lemma \ref{lemma:linear_algebra}.\end{proof}

	
	\section{Comparison with Tijdeman's original proof}\label{sect:comparison_proof_tijdeman}
	
	Both Tijdeman's original proof and our alternative proof proceed by contradiction, and rely on a thorough study of the Rauzy graph of $w$ for a specific length $m$, defined as the smallest integer for which the first difference of the complexity function is smaller than expected. However: 
	\begin{itemize}
		\item our proof is purely algebraic, and crucially relies on the notion of flow matrix, and on the following new result.
		
		\begin{theorem}[Andrieu, Cassaigne]
			Let $d \geq 1$ and let $w$ be an infinite $d$-ary word. Let $n \in \N$, and denote by $M$ the flow matrix of $w$ for length $n$. Let $K$ be either $\Q$ or $\R$. Then we have
			\begin{enumerate}
			\item $\ker_K (M^t) = \Span((1,\ldots,1)^t)$; 
			\item $\dim \ker_K (M) = p_w(n+1)-p_w(n)+1$.
			\end{enumerate}
			
			Furthermore, if $(f_v)_{v\in\lan_{n+1}(w)}$ denotes the frequencies of the factors of length $n+1$ in $w$, or any subsequential limit of  \[ \Big( \Big( \frac{|\pref_{l}(w)|_v}{l}  \Big)_{v \in \lan_{n+1}(w)}\Big)_{l}, \] then
		 $(f_v)_{v\in\lan_{n+1}(w)} \in \ker_K (M)$.
		\end{theorem}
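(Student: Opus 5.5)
The plan is to observe that the three assertions have already been proved in Section~\ref{sect:Bigproof_core}, but only for the specific length $m$ and over $\Q$. So the work is to re-read those arguments and check that they use nothing specific to $m$, to $\Q$, or to the minimality hypothesis \eqref{eq:def_parameter_m}.

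\textbf{Assertion 1.} I would repeat the proof of Lemma~\ref{lemma:kerM} verbatim, with coefficients in $K$.
\begin{enumerate}
\item Every column of $M$ contains exactly one $+1$ and one $-1$, or is entirely zero when $v$ starts and ends with the same $u$. Hence its entries sum to $0$, and $(1,\ldots,1)^t\in\ker_K(M^t)$.
\item Conversely, let $x\in\ker_K(M^t)$ and take $u\neq u'$ in $\lan_n(w)$.
\item By Remark~\ref{rk:semi_connected}, the Rauzy graph of length $n$ is semi-connected, so there is a path joining $u$ and $u'$ in one direction or the other; shorten it so that its vertices are pairwise distinct.
\item Each edge $v_i$ of this path gives a row of $M^t$ whose only nonzero entries are $+1$ at $u_i$ and $-1$ at $u_{i-1}$. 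So $x\in\ker_K(M^t)$ forces $x_{u_{i-1}}=x_{u_i}$, and propagating along the path gives $x_u=x_{u'}$.
\end{enumerate}
This uses only the semi-connectedness of the Rauzy graph and the fact that $K$ is a field, so it holds for every $n$ and for $K\in\{\Q,\R\}$.

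\textbf{Assertion 2.} This is rank--nullity applied twice, exactly as in \eqref{eq:dimensionsMand_transpose} but without invoking the inequality \eqref{eq:jump}. Since $M$ has size $p_w(n)\times p_w(n+1)$,
\[
\dim\ker_K(M)=p_w(n+1)-\rank(M)=p_w(n+1)-p_w(n)+\dim\ker_K(M^t).
\]
Assertion~1 gives $\dim\ker_K(M^t)=1$, which yields the formula. The rank of $M$ does not depend on whether $K=\Q$ or $K=\R$, as recalled in Lemma~\ref{lemma:linear_algebra}(ii), so the formula holds over both fields.

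\textbf{Final claim.} Let $f=(f_v)_{v\in\lan_{n+1}(w)}$ be the limit along some increasing sequence $(l_k)_k$ of the vector of normalized counts; genuine frequencies are the special case of a convergent sequence. Fix $u\in\lan_n(w)$.
\begin{enumerate}
\item The two counting identities \eqref{eq:pattern_counting_estimate} and \eqref{eq:pattern_counting_estimate2}, with $n$ in place of $m$, express $|\pref_{l_k}(w)|_u$ in two ways: as the sum of $|\pref_{l_k}(w)|_v$ over $v$ ending with $u$, and as the same sum over $v$ starting with $u$, each time up to an error in $\{0,1\}$.
\item Divide by $l_k$ and let $k\to\infty$. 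The errors vanish, and the two limits coincide with the same $f_u$, giving the conservation law \eqref{eq:double_conservation_law}.
\item Subtract one sum from the other. Any $v$ that both starts and ends with $u$ appears in both sums and cancels, which matches the convention $M_{u,v}=0$ in that case. The result is $(Mf)_u=0$.
\end{enumerate}
Hence $f\in\ker_\R(M)$. If $f$ happens to be rational it also lies in $\ker_\Q(M)$; in general only membership in $\ker_\R(M)$ is meaningful, and I would state the claim in that form.

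I expect no substantial obstacle. The only delicate points are the cancellation in step~3 for factors that both start and end with $u$ (that is, loops in the Rauzy graph), and making clear that the kernel over $\Q$ is to be understood for rational vectors.
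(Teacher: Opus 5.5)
Your proposal is correct and follows the paper's own route. The paper gives no separate proof of this theorem; its content is exactly what you extract from Lemmas~\ref{lemma:kerM} and~\ref{lemma:eigenvector}, namely semi-connectedness of the Rauzy graph for $\ker_K(M^t)$, rank--nullity without the inequality \eqref{eq:jump}, and the two counting identities for the conservation law, none of which uses the minimality of $m$. Your caveat is also well taken: the frequency vector in general lies only in $\ker_\R(M)$, and lies in $\ker_\Q(M)$ only when it is rational, which matches the formulation of Lemma~\ref{lemma:eigenvector}.
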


		By contrast, Tijdeman's original proof does not use linear algebra. Instead, it relies on combinatorial arguments and a notion of ``$p$-passing number'' in directed graphs.
		\item The original result from Tijdeman is slightly weaker than our Theorem \ref{th:Tij_ameliore_chap3}: his estimate on the growth of the complexity function of an infinite word $w$ relies on the \emph{minimal} rational degree of the letters in $w$, while ours relies on their \emph{maximal} rational degree (see the discussion in Chapter 2, Section \ref{ssect:tijdeman_complet_t_ameliore}.) However, we think that Tijdeman's original proof can be restructured in order to establish our stronger result. 
	\end{itemize}

	\section{A byproduct of our proof: words with Tijdeman's minimal complexity are dendric}\label{sect:dendric_proof}

	The aim of this last section is to prove, with the ingredients of our alternative proof of Tijdeman's theorem, the following new result.
	
	\dendrictheorem*

	This theorem is a step forward towards understanding the combinatorial structure of infinite words with rationally independent letter frequencies and minimal complexity, see the complete discussion and open questions in Chapter 2, Section \ref{sect:dary_min_complexity_characterization}. 
	
	\medskip

	We start by recalling the definition of dendric words.

	An infinite word is \emph{dendric} if the extension graph of each of its factors is a tree. We recall that an undirected graph is a \emph{tree} when it is \emph{connected} (by definition: when there exists a path between every two vertices), and \emph{acyclic} (by definition: it contains no cycle). The \emph{extension graph} of a factor $u \in \lan_n(w)$ is the undirected bipartite graph $\ext(u)=(L \sqcup R, B)$ defined as follows: \begin{itemize}
	\item We denote by $L$ the set of all letters $a$ such that $au \in \lan_{n+1}(w)$. Symmetrically, we denote by $R$  the set of all letters $b$ such that $ub \in \lan_{n+1}(w)$;
	\item There is an edge between two vertices $a \in L$ and $b \in R$ if and only if $aub \in\lan_{n+2}(w)$. The set of all edges is denoted by $B$.
	 \end{itemize}
	The notion of dendric words was introduced in \cite{BDDLP15}, whereas the concept of extension graphs can be traced back to \cite{Cas97}.

	\begin{remark} Extensions graphs and Rauzy graphs are intimately related. More precisely, to construct the Rauzy graph $\mathcal{G}(n+1)$ of a recurrent word $w$ for length $n+1$, it suffices to know  its Rauzy graph $\mathcal{G}(n)$ for length $n$, and the extension graphs of all its length-$n$ factors that are simultaneously left- and right-special. (Note that such factors are represented in $\mathcal{G}(n)$ by a vertex with several incoming and several outgoing edges.)
		\end{remark}
	

	\begin{example}The infinite word $w_1=11111111...$ is dendric. Indeed, the extension graphs of its factors are all equal to the bipartite graph in Figure \ref{fig:extension_graphs_ex}, left.
	By contrast, the periodic infinite word $w_2=112211221122...$ is not dendric. Indeed, the extension graph of the empty word $\eps \in \lan(w_2)$ is the non-acyclic graph on Figure \ref{fig:extension_graphs_ex}, right.
	\end{example}

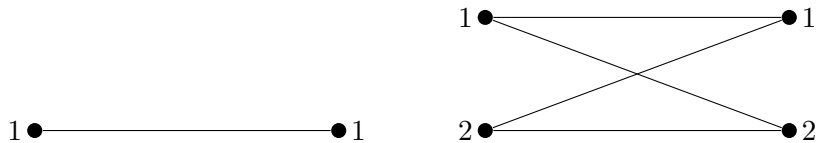
\begin{figure}
	\begin{center}
		\begin{tikzpicture}[
		every node/.style={circle, fill=black, inner sep=2pt},
		]
		\node (L1) at (0,0) {};
		\node (R1) at (4,0) {};
		
		\node[fill=none, left] at (L1) {1};
		\node[fill=none, right] at (R1) {1};
		
		\draw (L1) -- (R1);
			\end{tikzpicture} \qquad
			\begin{tikzpicture}[
			every node/.style={circle, fill=black, inner sep=2pt},
			]
			
			\node (L1) at (0,1.5) {};
			\node (L2) at (0,0) {};
			
			\node (R1) at (4,1.5) {};
			\node (R2) at (4,0) {};
			
			\node[fill=none, left]  at (L1) {1};
			\node[fill=none, left]  at (L2) {2};
			\node[fill=none, right] at (R1) {1};
			\node[fill=none, right] at (R2) {2};
			
			\draw (L1) -- (R1);
			\draw (L1) -- (R2);
			\draw (L2) -- (R1);
			\draw (L2) -- (R2);
			
			\end{tikzpicture}
	\end{center}
\caption{\label{fig:extension_graphs_ex} The extension graph of every factor in $w_1=11111...$ (left), and of the empty word in $w_2=112211221122...$ (right).}
\end{figure}

		\begin{exercise} [Dendricity and recurrence]\textcolor{white}{.}\\
		1) Prove that the infinite word $w_1 = 21111111...$ is not dendric. \\
			2) Prove that every  dendric infinite word is recurrent.\\
				3) A word is said to be \emph{bi-infinite} if its  letters are indexed by $\Z$ rather than $\N$. Prove, by contrast, that the non-recurrent bi-infinite word $w_2 = ...1112111...$ is dendric.
				
	\end{exercise}

	\begin{exercise} Prove that every Sturmian word is dendric. \emph{Indication}: use Proposition \ref{prop:basics}, Chapter 1.
\end{exercise}
	
 It is well known that all $d$-ary dendric words have complexity $n\mapsto (d-1)n+1$  \cite{BDDLP15}.	Interestingly, dendric words are even characterized by their complexity, provided solely that their extension graphs are  connected.
	
	\begin{proposition}\label{prop:charac_dendric}
	An infinite $d$-ary word $w$ is dendric if and only if its complexity is $n\mapsto (d-1)n+1$, and the extension graphs of all its factors are connected.
\end{proposition}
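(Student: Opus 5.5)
The plan is to relate the first difference of the complexity to the extension graphs, and then use a standard counting identity for trees. For a factor $u\in\lan_n(w)$, write $\ext(u)=(L(u)\sqcup R(u),B(u))$. Let $\ell(u)=\card L(u)$, $r(u)=\card R(u)$, $e(u)=\card B(u)$. Define the bilateral multiplicity
\[
m(u):=e(u)-\ell(u)-r(u)+1 .
\]

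The first step is to count lengths. We have $p_w(n+1)=\sum_{u\in\lan_n(w)} r(u)$, and also $p_w(n+2)=\sum_{u\in\lan_n(w)} e(u)$, since each factor of length $n+2$ is $aub$ for a unique $u$. Provided every factor admits at least one left extension, we also get $p_w(n+1)=\sum_{u\in\lan_n(w)}\ell(u)$. Combining these three sums gives
\[
s(n+1)-s(n)=\sum_{u\in\lan_n(w)} m(u),\qquad s(n):=p_w(n+1)-p_w(n).
\]
Hence the following sub-step matters: if $w$ is dendric, I first check that every factor is left-extendable, so that $w$ is recurrent. The reason is that a tree contains at least one vertex on each side, so $L(u)\neq\emptyset$ for every factor $u$. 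A non-recurrent word would have a prefix whose left extension set is empty for large enough length. So in the dendric case the identity above holds. In the converse direction I will also need left-extendability, and I get it from connectedness: a connected bipartite graph with $R(u)\neq\emptyset$ and at least one edge has $L(u)\neq\emptyset$.

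Next, for a connected graph, $e\geq \ell+r-1$, with equality exactly when the graph is a tree. So connectedness of all extension graphs means $m(u)\geq0$ for every $u$, and dendricity means $m(u)=0$ for every $u$.

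($\Rightarrow$) If $w$ is dendric, then $m(u)=0$ for all $u$, so $s$ is constant. Now $s(0)=p_w(1)-p_w(0)=d-1$, so $s(n)=d-1$ for all $n$ and $p_w(n)=(d-1)n+1$. Connectedness of the extension graphs holds by definition of a tree.

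($\Leftarrow$) Assume the complexity is $(d-1)n+1$ and all extension graphs are connected. Then $s$ is constant equal to $d-1$, so $\sum_{u\in\lan_n(w)}m(u)=0$ for every $n$. Since each $m(u)\geq0$ by connectedness, every $m(u)$ vanishes. A connected graph with $e=\ell+r-1$ is a tree, so $w$ is dendric.

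The main obstacle is the left-extension identity $p_w(n+1)=\sum_u \ell(u)$, which fails for non-recurrent words. I resolve it as above: non-emptiness of $L(u)$ follows from connectedness of $\ext(u)$, since $ub\in\lan(w)$ always gives $R(u)\neq\emptyset$, and then the edge set must be nonempty. That last point needs some care, because it requires the occurrence $ub$ to be preceded by a letter. I would handle it by noting that connectedness of $\ext(u)$ for the relevant prefixes of $w$ forces $L(u)\neq\emptyset$. I would check this degenerate case explicitly, possibly adopting the convention that a graph with an isolated vertex on one side is disconnected.
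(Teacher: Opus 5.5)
Your proof follows the paper's: your identity $s(n+1)-s(n)=\sum_{u\in\lan_n(w)}m(u)$ is exactly the paper's second-difference formula \eqref{eq:second_derivative}. Both directions then follow, as in the paper, from the fact that a connected graph satisfies $e\geq v-1$, with equality if and only if it is a tree.

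However, the ``main obstacle'' you identify does not exist. The identity $p_w(n+1)=\sum_{u\in\lan_n(w)}\ell(u)$ holds for every infinite word, recurrent or not. Indeed, each $v\in\lan_{n+1}(w)$ decomposes uniquely as $v=au$ with $u\in\lan_n(w)$ and $a\in L(u)$. A factor with $L(u)=\emptyset$ simply contributes $0$ to the sum. This is why the paper remarks that the identity holds for any factorial language.

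You should therefore delete the left-extendability discussion entirely, all the more because the arguments you give for it are false. Take a prefix $u$ of a non-recurrent word that is long enough to occur only at position $0$. Then $L(u)=\emptyset$ and $R(u)=\{b\}$, so $\ext(u)$ is a single vertex, which is a connected tree. Hence neither ``a tree has a vertex on each side'' nor ``connectedness forces $L(u)\neq\emptyset$'' is true. Adopting a convention that declares such graphs disconnected would change the hypothesis of the statement you are proving.

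With that detour removed, your argument is complete.
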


\begin{proof} Let $G$ be an undirected graph with $v$ vertices and $e$ edges. We recall two classical results from graph theory:\\
	(1) if $G$ is connected, then one has $e\geq v - 1$; \\
	(2) $G$ is a nonempty tree if and only if $G$ is connected and $e= v - 1$.

Let $w$ be an infinite word.
Denote by $\ext(u)=(L(u)\sqcup R(u),B(u))$ the extension graph of any of its factors $u$. First, observe that the discrete derivative of the complexity function of $w$ is
\[p(n+1)-p(n) = \sum_{u \in \lan_n(w)} (\vert R(u)\vert -1),\]
for all $n\in N$. Similarly, we have
\[p(n+2)-p(n+1) = \sum_{v \in \lan_{n+1}(w)} (\vert R(v)\vert -1) = \sum_{u \in \lan_n(w)} \sum_{a \in L(u)} (\vert R(au)\vert -1).\]
Combining these two identities, we obtain that the second derivative of the complexity $\Delta(n):= (p(n+2)-p(n+1))-(p(n+1)-p(n))$ is equal\footnote{It is interesting, but not useful in this lecture, to remark that the identity \eqref{eq:second_derivative} holds for every \emph{factorial language} $\lan$, without assumption on its \emph{extendability}.} to
\begin{equation}\label{eq:second_derivative}
\begin{array}{ll}\Delta(n)  & = \sum_{u \in \lan_n(w)} \sum_{a \in L(u)} (\vert R(au)\vert -1) - \sum_{u \in \lan_n(w)}  (\vert R(u)\vert-1)\\
& = \sum_{u \in \lan_n(w)} (\vert B(u)\vert - \vert L(u) \vert - \vert R(u) \vert + 1)\end{array}\end{equation} for every $n \in \N$. 

\medskip

Now, if $w$ is a $d$-ary dendric word, then by (2) each contribution in the sum \eqref{eq:second_derivative} is equal to $0$, hence the second derivative of the complexity is $0$. The complexity is thus an affine function. Since $p(0)=1$ and $p(1)=d$, we conclude that $p(n)=(d-1)n+1$ for all $n\in \N$, and the direct implication in Proposition \ref{prop:charac_dendric} is proved.

We prove the converse implication. On the one hand, since the complexity of $w$ is $p(n)=(d-1)n+1$, its second derivative is $0$. On the other hand, by (1), since the extension graphs of all factors of $w$ are connected, every contribution $ \vert B(u)\vert - \vert L(u) \vert - \vert R(u) \vert + 1$ in the sum \eqref{eq:second_derivative} is nonnegative. Therefore, for every $u \in \lan_n(w)$ and every $n \in \N$, we have 
\[ \vert B(u)\vert - \vert L(u) \vert - \vert R(u) \vert + 1 = 0.\] By (2), we conclude that $w$ is dendric. The proof of Proposition \ref{prop:charac_dendric} is complete.
\end{proof}

\begin{exercise}
Similarly to Proposition \ref{prop:charac_dendric}, prove that an infinite $d$-ary word $w$ is dendric if, and only if, its complexity is $n\mapsto (d-1)n+1$, and the extension graphs of all its factors are acyclic.
\end{exercise}

	With the characterization of Proposition \ref{prop:charac_dendric} in mind, we can prove Theorem \ref{th:dendric_chap3}. 
	

\begin{proof}[Proof of Theorem \ref{th:dendric_chap3}] Let $d \geq 1$. Let $w$ be an infinite $d$-ary word with rationally independent letter frequencies and complexity $n\mapsto (d-1)n+1$. 
	By Proposition \ref{prop:charac_dendric}, to prove that $w$ is dendric, it suffices to show that the extension graphs of all its factors are connected. We proceed by contradiction, and assume that there exists a factor $u$ of $w$ whose extension graph $\ext(u)=(R\sqcup L, B)$ is not connected. 
	The vertices of $\ext(u)$ can thus be partitioned into two nonempty sets $V_1,V_2$ such that there exists no edge between a vertex in $V_1$ and a vertex in $V_2$. Denote by $L_1,L_2,R_1,R_2$ the subpartition $L_1:=L\cap V_1$, $L_2:=L\cap V_2$, $R_1:=R\cap V_1$ and $R_2:=R\cap V_2$. It follows from the nonemptyness of $V_1, V_2$ that at least three of the subsets $L_1,L_2,R_1,R_2$ are nonempty. (Although it is not important in the proof, note that the recurrence of $w$, which was proved in Chapter 2, Proposition \ref{prop:tijdeman_recurrent}, implies that the four subsets $L_1,L_2,R_1,R_2$ are nonempty.)
\medskip

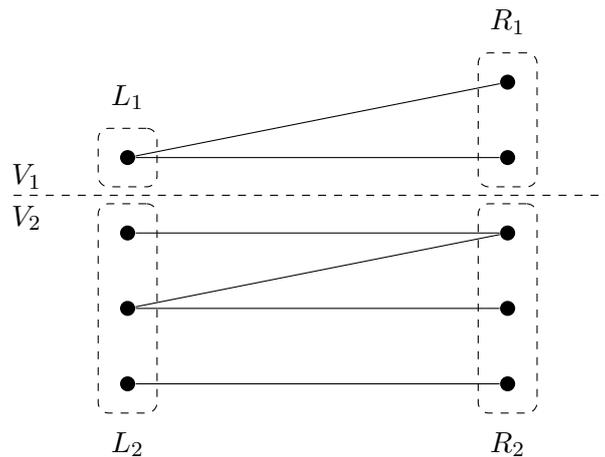
\begin{figure}[h!]
	\begin{center}
		\begin{tikzpicture}[
		vertex/.style={circle, fill=black, inner sep=2pt},
		group/.style={draw, dashed, rounded corners, inner sep=8pt}
		]
		
		
		\node[vertex] (L1) at (0,3) {};
		
		\node[vertex] (L2) at (0,2) {};
		\node[vertex] (L3) at (0,1) {};
		\node[vertex] (L4) at (0,0) {};
		
		
		\node[vertex] (R1) at (5,4) {};
		\node[vertex] (R2) at (5,3) {};
		
		\node[vertex] (R3) at (5,2) {};
		\node[vertex] (R4) at (5,1) {};
		\node[vertex] (R5) at (5,0) {};
		
		
		\draw (L1) -- (R1);
		\draw (L1) -- (R2);
		
		\draw (L2) -- (R3);
		\draw (L3) -- (R3);
		\draw (L3) -- (R4);
		
		\draw (L4) -- (R5);
		
		\draw[dashed] (-1.5,2.5) -- (6.5,2.5);
		
		
		\node[group, fit=(L1)] {};
		\node[group, fit=(R1)(R2)] {};
		\node[group, fit=(L2)(L3)(L4)] {};
		\node[group, fit=(R3)(R4)(R5)] {};
		
		
		\node[fill=none, left] at (-1,2.75) {$V_1$};
		\node[fill=none, left] at (-1,2.2) {$V_2$};
		
		\node[fill=none] at (0,3.8) {$L_1$};
		\node[fill=none] at (5,4.8) {$R_1$};
		\node[fill=none] at (0,-0.8) {$L_2$};
		\node[fill=none] at (5,-0.8) {$R_2$};
		
		\end{tikzpicture}
	\end{center}
	\caption{\label{fig:extensiongraph_partition} Example of a partition of the extension graph.}
\end{figure}

	Denote by $\mathcal{G}$ the Rauzy graph of $w$ for the length $n = \vert u \vert$. We are going to modify $\mathcal{G}$ into a new graph $\mathcal{G'}$ by splitting the vertex $u$ into two copies $u_1$ and $u_2$, and distributing its incoming and outgoing edges between  $u_1$ and $u_2$ according to the partition of its extension graph $\ext(u)$. More precisely, we define the graph $\mathcal{G'}=(V',E')$ as follows.\\
	\hspace*{0.2cm}	- Its vertices are $V'=\lan_n(w)\backslash\{u\}\cup\{u_1,u_2\}$.\\
	\hspace*{0.2cm} - All edges in $\mathcal{G}$ that are not adjacent to the vertex $u$  are preserved in $\mathcal{G'}$. The incoming edges to $u_1$ (respectively, $u_2$) in $\mathcal{G'}$ are the incoming edges to $u$ in $\mathcal{G}$ that furthermore belong to  $L_1\cdot u=\{au, a \in L_1\}$ (respectively, to $L_2\cdot u$). Similarly, the outgoing edges from $u_1$ (respectively, $u_2$) in $\mathcal{G'}$ are the outgoing edges from $u$ in $\mathcal{G}$ that furthermore belong to $u\cdot R_1$ (respectively, $u\cdot R_2$).
	\medskip 
	
	The graph $\mathcal{G'}$ thus defined has $\vert V'\vert = p_w(n)+1$ vertices, and $\vert E'\vert=\vert E \vert=p_w(n+1)$ edges. Although $\mathcal{G'}$ is not a Rauzy graph anymore, it inherits many of its properties. Notably, $\mathcal{G'}$ is still semi-connected. 
	
	\begin{lemma}\label{lemma:connectedness_Gprime}
The graph $\mathcal{G'}$ is semi-connected. 
	\end{lemma}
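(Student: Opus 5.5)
The plan is to lift the ``sliding window'' walk of Remark~\ref{rk:semi_connected} from $\mathcal{G}$ to $\mathcal{G'}$. Reading $w$ from left to right with a window of length $n$ gives an infinite walk in $\mathcal{G}$,
\[
w[0..n-1] \to w[1..n] \to w[2..n+1] \to \cdots,
\]
whose $k$-th edge is the length-$(n+1)$ factor $w[k..k+n]$. This walk visits every vertex, and it traverses every edge, since every factor of length $n+1$ occurs somewhere in $w$. I will turn it into a walk in $\mathcal{G'}$ that visits every vertex of $V'$. Semi-connectedness then follows as in Remark~\ref{rk:semi_connected}: given two vertices of $\mathcal{G'}$, the portion of the walk between a visit to the one reached first and a later visit to the other is a path joining them.

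\emph{Step 1: the lifted walk.} Vertices different from $u$ and edges not adjacent to $u$ stay unchanged. Consider an occurrence of $u$ at a position $k$ of $w$, that is, $w[k..k+n-1]=u$. Let $b=w[k+n]$ be the letter following it, and, if $k\geq 1$, let $a=w[k-1]$ be the letter preceding it.
\begin{itemize}
\item If $k\geq 1$, then $aub\in\lan_{n+2}(w)$, so $\{a,b\}$ is an edge of $\ext(u)$. Hence $a$ and $b$ lie in the same part $V_i$, because no edge of $\ext(u)$ joins $V_1$ to $V_2$. I replace this visit to $u$ by a visit to $u_i$.
\item If $k=0$, I use the index $i$ with $b\in R_i$.
\end{itemize}
By construction of $E'$, the incoming edge $au$ (which lies in $L_i\cdot u$) enters $u_i$, and the outgoing edge $ub$ (which lies in $u\cdot R_i$) leaves $u_i$. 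So the consistency of the splitting with the partition of $\ext(u)$ is exactly what makes the lifted sequence a genuine walk in $\mathcal{G'}$. Moreover, since every edge of $\mathcal{G}$ is traversed and $E'=E$ as labelled edges, the lifted walk traverses every edge of $\mathcal{G'}$.

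\emph{Step 2: every vertex of $V'$ is visited.} Vertices other than $u_1,u_2$ are visited, just as in $\mathcal{G}$. The set $V_1$ is nonempty, so it contains a letter $c\in L_1$ or $c\in R_1$. The corresponding factor $cu$ or $uc$ lies in $\lan_{n+1}(w)$, and it is an edge of $\mathcal{G'}$ incident to $u_1$. Since the lifted walk traverses every edge, it visits $u_1$. The same argument applies to $u_2$.

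The main point needing care is the boundary case where $w$ starts with $u$, because that visit has no incoming edge. This is handled by the convention of Step 1. Alternatively, one can invoke Proposition~\ref{prop:tijdeman_recurrent}: $w$ is recurrent, so every edge is traversed infinitely often and one may start the walk after position $0$. Everything else is bookkeeping.
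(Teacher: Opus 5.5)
Your proof is correct and follows essentially the same route as the paper: lift the sliding-window walk of $w$ to $\mathcal{G'}$, then note that $u_1$ and $u_2$ are each incident to at least one edge, so the walk visits every vertex of $V'$. You also make explicit a point the paper leaves implicit: consecutive edges $au$ and $ub$ of the window walk meet at the same copy $u_i$, because $aub\in\lan_{n+2}(w)$ forces $a$ and $b$ into the same part of $\ext(u)$.
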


\begin{proof}
The proof is a slight modification of that of Lemma \ref{lemma:kerM}. Since the edges of $\mathcal{G}'$ are exactly the factors of length $n+1$ in the infinite word $w$, there always exists in $\mathcal{G}'$ a path that connects any two edges $v_1$ and $v_2 \in \lan_{n+1}(w)$. (Such a path is for example given by sliding a window between an occurrence of $v_1$ and an occurrence of $v_2$ in $w$.) It now suffices to observe that every vertex in $\mathcal{G}'$ is attached to at least one edge. This is trivially true for all vertices in $V'\backslash \{u_1,u_2\}$ which all admit an outgoing edge. This is also true for $u_1$ and $u_2$ due to the fact that at least three of the four subsets $L_1,L_2,R_1,R_2$ are nonempty. Lemma \ref{lemma:connectedness_Gprime} is proved.
	\end{proof}
	 
	Furthermore, $\mathcal{G}'$ admits a ``flow matrix'' $M'$ of size $\vert V'\vert \times \vert E'\vert$ defined as follows: the entry at position $(u,v)\in V'\times E'$ is $1$ if the oriented edge $v$ departs from $u$ and does not arrive at $u$, $-1$ if $v$ arrives at $u$ but does not depart from $u$, and $0$ otherwise. We  claim that the core lemmas of our proof of Tijdeman's theorem (namely Lemmas \ref{lemma:eigenvector} and \ref{lemma:kerM}) remain true for $M'$, and that their proofs  require  only minor adaptation.
	
	\begin{lemma}\label{lemma:three_lemmas_adapted} With the notation above, we have
		\begin{enumerate}
			\item $\dim \ker_{\Q}(M') = \vert E'\vert - \vert V'\vert +1 = d-1$ ,
			\item the pseudo-frequency vector of length-$n+1$ factors of $w$  belongs to the kernel of $M'$, \emph{i.e.},
			\[M'\cdot(f_v)_{v\in \lan_{n+1}(w)} =0.\]
		\end{enumerate}
	\end{lemma}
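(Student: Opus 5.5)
We prove the two items separately, reusing the arguments of Lemmas \ref{lemma:kerM} and \ref{lemma:eigenvector} almost verbatim.

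\emph{Item 1.} We apply the rank--nullity theorem to $M'$ and to its transpose, exactly as in the proof of Lemma \ref{lemma:kerM}. This gives
\[
\dim\ker_{\Q}(M') = \vert E'\vert - \vert V'\vert + \dim\ker_{\Q}(M'^t).
\]
So it suffices to show that $\ker_{\Q}(M'^t)=\Span_{\Q}((1,\ldots,1)^t)$.

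The inclusion $\supseteq$ holds because every column of $M'$ contains one $+1$ and one $-1$, or is identically zero in the case of a loop; its entries therefore sum to $0$.

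For the inclusion $\subseteq$, let $x\in\ker_{\Q}(M'^t)$ and take two distinct vertices $y,y'$ of $\mathcal{G}'$. By Lemma \ref{lemma:connectedness_Gprime}, there is a directed path between them, and we may shorten it so that its vertices are pairwise distinct. Each edge $v$ on this path joins two distinct vertices, so the row of $M'^t$ indexed by $v$ is $e_{\text{tail}}-e_{\text{head}}$ up to sign. Hence $x_{\text{tail}}=x_{\text{head}}$ along each edge, and this equality propagates along the path to give $x_y=x_{y'}$.

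Plugging in $\vert E'\vert=p_w(n+1)=(d-1)(n+1)+1$ and $\vert V'\vert=p_w(n)+1=(d-1)n+2$ yields
\[
\dim\ker_{\Q}(M')=(d-1)-1+1=d-1.
\]

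\emph{Item 2.} Fix pseudo-frequencies $(f_v)_{v\in\lan_{n+1}(w)}$ along a subsequence $(n_l)$, as in \eqref{eq:factors_freq}. For every vertex $y\neq u_1,u_2$, the row of $M'$ at $y$ coincides with the row of the original flow matrix $M$ at $y$. Indeed, the edges entering and leaving $y$ are unchanged, and a loop at $y$ contributes $0$ in both matrices. Lemma \ref{lemma:eigenvector} therefore gives the required relation at every such $y$.

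At $u_1$ we need the new conservation law
\[
\sum_{a\in L_1} f_{au} \;=\; \sum_{b\in R_1} f_{ub},
\]
with the loop $uu$, if present, cancelling on both sides. The row at $u_2$ then follows, because its sum equals the row at $u$ in $M$ minus the row at $u_1$.

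To prove this law, we count occurrences in $\pref_{n_l}(w)$. Every occurrence of $u$ that is not at position $0$ and not at the very end of the prefix has a preceding letter $a$ and a following letter $b$, and $aub\in\lan(w)$. Since there is no edge between $V_1$ and $V_2$ in $\ext(u)$, we have $a\in L_1$ if and only if $b\in R_1$. Hence
\[
\sum_{a\in L_1}\vert\pref_{n_l}(w)\vert_{au}-\sum_{b\in R_1}\vert\pref_{n_l}(w)\vert_{ub}
\]
is bounded by $2$ in absolute value. Dividing by $n_l$ and letting $l\to\infty$ gives the law.

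This identity, which uses the partition of $\ext(u)$ in an essential way, is the only genuinely new step and the main point to check carefully. It is also where the boundary effects from the first occurrence of $u$ and from occurrences at the end of the prefix must be shown to be $O(1)$.
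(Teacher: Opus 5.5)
Your proposal is correct and follows the paper's proof: rank--nullity applied to $M'$ and $M'^t$, together with the semi-connectedness of $\mathcal{G}'$, for item 1, and the Kirchhoff law refined by the partition of $\ext(u)$ for item 2. The differences are minor. You prove the law at $u_1$ by directly counting occurrences of $u$, with an $O(1)$ boundary error; this avoids the pseudo-frequencies $f_{aub}$ of length-$(n+2)$ factors that the paper uses implicitly, which would need a further subsequence extraction. You also get the row at $u_2$ from the identity $M'[u_1]+M'[u_2]=M[u]$ rather than by a symmetric argument.
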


	\begin{proof}[Proof of Lemma \ref{lemma:three_lemmas_adapted}]The proof of the first assertion is an immediate adaptation of that of Lemma \ref{lemma:kerM}. We recall that it relies on the connectedness of $\mathcal{G'}$ (proven in Lemma \ref{lemma:connectedness_Gprime}), from which we establish that $ \ker_{\Q} (M'^t) = \Span((1,\ldots,1)^t)$. Then, a double application of the rank-nullity theorem yields the desired equality. We prove the second assertion, which can still be understood as a Kirchhoff's junction rule on $\mathcal{G'}$. Denote by $M'[r]$ the row of $M'$ indexed by $r \in V'$. Our aim is to prove that for every $r \in V'$,
		\begin{equation}\label{eq:row_zero}
		M'[r]\cdot (f_v)_{v \in \lan_{n+1}(w)} = 0.
		\end{equation}
		This equation is trivially satisfied for every $r\neq u_1,u_2$ since $M'[r]$ is also a row of $M$, the flow matrix of the original Rauzy graph $\mathcal{G}$ for length $n$. It thus suffices to reuse Lemma \ref{lemma:eigenvector}.
		For $r = u_1$ and $r =u_2$, the equation \eqref{eq:row_zero} comes from the following equalities:
		for every $a \in \{1,\ldots,d\}$, we have
		\[f_{au} = \sum_{b \in \{1,\ldots,d\}} f_{aub}; \]  and symmetrically, for every $b \in \{1,\ldots,d\}$, we have
		\[f_{ub} = \sum_{a \in \{1,\ldots,d\}} f_{aub}.\]
		Since there exists no edge that connects $V_1$ and $V_2$, we have that  $f_{aub} = 0$ if $(a,b) \in L_1\times R_2 \cup L_2\times R_1$. Hence
		\begin{equation}\label{eq:refinedkirkof1} \sum_{a \in L_1} f_{au} =  \sum_{a \in L_1}\sum_{b \in R_1} f_{aub}=\sum_{b \in R_1}f_{ub},\end{equation}
		and 
		\begin{equation}\label{eq:refinedkirkof2} \sum_{a \in L_2} f_{au} =  \sum_{a \in L_2}\sum_{b \in R_2} f_{aub} = \sum_{b \in R_2}f_{ub}.\end{equation}
		The equations \eqref{eq:refinedkirkof1} and \eqref{eq:refinedkirkof2} are respectively equivalent to $
		M'[u_1]\cdot(f_v)_{v \in \lan_{n+1}(w)} =0$ and $M'[u_2]\cdot(f_v)_{v \in \lan_{n+1}(w)}=0$. Lemma \ref{lemma:three_lemmas_adapted} is proven.
	\end{proof}

	From Lemma \ref{lemma:three_lemmas_adapted}, with the exact same calculation as in Lemma \ref{lemma:dimQ}, we prove that the letter frequencies of $w$ span a rational linear space of dimension at most $d-1$. We reach the desired contradiction. The proof of Theorem \ref{th:dendric_chap3} is complete.
\end{proof}

	%
	%

\cleardoublepage

	\bibliography{biblio_ValpoLecture}
	\bibliographystyle{alpha}
	
\end{document}